\long\def\symbolfootnote[#1]#2{\begingroup%
\def\thefootnote{\fnsymbol{footnote}}\footnote[#1]{#2}\endgroup}
\qed\vspace{5pt}}
\newtheoremstyle{lause}
{5pt}
{5pt}
{\slshape}
{\parindent}
{\bfseries}
{.}
{.5em}
{}
\theoremstyle{lause}
\newtheoremstyle{maaritelma}
{5pt}
{5pt}
{\rmfamily}
{\parindent}
{\bfseries}
{.}
{.5em}
{}
\theoremstyle{maaritelma}
\newtheoremstyle{lause}
{5pt}
{5pt}
{\slshape}
{\parindent}
{\bfseries}
{.}
{.5em}
{}
\theoremstyle{lause}
\newtheorem{theorem}{Theorem}[section]
\newtheorem{lemma}[theorem]{Lemma}
\newtheorem{proposition}[theorem]{Proposition}
\newtheorem{corollary}[theorem]{Corollary}
\newtheoremstyle{maaritelma}
{5pt}
{5pt}
{\rmfamily}
{\parindent}
{\bfseries}
{.}
{.5em}
{}
\theoremstyle{maaritelma}
\newtheorem{definition}[theorem]{Definition}
\newtheorem{example}[theorem]{Example}
\newtheorem{remark}[theorem]{Remark}
\DeclareMathOperator*{\essinf}{ess\,inf}
\numberwithin{equation}{section}
\begin{document}

\thispagestyle{empty}

\begin{center}

{\large{\textbf{Harmonic measure, equilibrium measure, and thinness at infinity in the theory of Riesz potentials}}}

\vspace{18pt}

\textbf{Natalia Zorii}

\vspace{18pt}

\emph{Dedicated to Professor Bent Fuglede on the occasion of his 95th birthday}\vspace{8pt}

\footnotesize{\address{Institute of Mathematics, Academy of Sciences
of Ukraine, Tereshchenkivska~3, 01601,
Kyiv-4, Ukraine\\
natalia.zorii@gmail.com }}

\end{center}

\vspace{12pt}

{\footnotesize{\textbf{Abstract.} Focusing first on the inner $\alpha$-har\-mo\-nic measure $\varepsilon_y^A$ ($\varepsilon_y$ being the unit Dirac measure, and $\mu^A$ the inner $\alpha$-Riesz balayage of a Radon measure $\mu$ to $A\subset\mathbb R^n$ arbitrary), we
describe its Euclidean support, provide a formula for evaluation of its total mass, establish the vague continuity of the map $y\mapsto\varepsilon_y^A$
outside the inner $\alpha$-ir\-reg\-ul\-ar points for $A$, and obtain necessary and sufficient conditions for $\varepsilon_y^A$ to be of finite energy (more generally, for $\varepsilon_y^A$ to be absolutely continuous with respect to inner capacity) as well as for $\varepsilon_y^A(\mathbb R^n)\equiv1$ to hold. Those criteria are given in terms of the newly defined concepts of $\alpha$-thin\-ness and $\alpha$-ul\-tra\-thin\-ness at infinity that generalize the concepts of thinness at infinity by Doob and Brelot, respectively. Further, we extend some of these results to $\mu^A$ general by verifying the
formula $\mu^A=\int\varepsilon_y^A\,d\mu(y)$. We also show that there is a $K_\sigma$-set $A_0\subset A$ such that $\mu^A=\mu^{A_0}$ for all $\mu$, and give various applications of this theorem. In particular, we prove the vague and strong continuity of the inner swept, resp.\ equilibrium, measure under the approximation of $A$ arbitrary, thereby    strengthening Fuglede's result established for $A$ Borel (Acta Math., 1960).
Being new even for $\alpha=2$, the results obtained also present a further development of the theory of inner Newtonian capacities and of inner Newtonian balayage, originated by Cartan.}}
\symbolfootnote[0]{\quad 2010 Mathematics Subject Classification:
Primary 31C15.} \symbolfootnote[0]{\quad Key words: inner Riesz balayage, inner $\alpha$-harmonic measure, inner Riesz equilibrium measure, inner $\alpha$-thin\-ness and $\alpha$-ul\-tra\-thin\-ness at infinity.}

\vspace{6pt}

\markboth{\emph{Natalia Zorii}} {\emph{$\alpha$-harmonic measure, $\alpha$-Riesz equilibrium measure, and $\alpha$-thinness at infinity}}

\section{Introduction and preliminaries}

This section provides a brief exposition of the theory of inner Riesz balayage of Radon measures on $\mathbb R^n$,
which has been developed in \cite{Z-bal} in the frame of the classical approach initiated for the Newtonian kernel
by Cartan \cite{Ca2}. We also recall the notion of inner Riesz equilibrium measure $\gamma_A$ for $A\subset\mathbb R^n$ arbitrary, treated in the extended sense where $\gamma_A(\mathbb R^n)$ might be infinite,
and give a summary of the results of the present study.
To begin with, we review some basic facts of potential theory with respect to the Riesz kernel $\kappa_\alpha(x,y):=|x-y|^{\alpha-n}$ of order $\alpha\in(0,2]$ on $\mathbb R^n$, $n\geqslant3$ (see \cite{L}).

\subsection{Some basic facts of Riesz potential theory}\label{sec-pr1} Let $\mathfrak M=\mathfrak M(\mathbb R^n)$ denote the linear space of all real-valued Radon measures $\mu$ on $\mathbb R^n$ equipped with the \emph{vague\/} topology of pointwise convergence on the class $C_0=C_0(\mathbb R^n)$ of all fi\-ni\-te-val\-ued continuous functions on $\mathbb R^n$ with compact support. Given $\mu,\nu\in\mathfrak M$, we define the \emph{potential\/} and the \emph{mutual energy\/} by
\begin{align*}\kappa_\alpha\mu(x)&:=\int\kappa_\alpha(x,y)\,d\mu(y),\quad x\in\mathbb R^n,\\
\kappa_\alpha(\mu,\nu)&:=\int\kappa_\alpha(x,y)\,d(\mu\otimes\nu)(x,y),
\end{align*}
respectively, provided the right-hand side is well defined as a finite number or $\pm\infty$. For $\mu=\nu$, $\kappa_\alpha(\mu,\nu)$ defines the \emph{energy\/} $\kappa_\alpha(\mu,\mu)$ of $\mu$. All $\mu\in\mathfrak M$ with $\kappa_\alpha(\mu,\mu)$ finite form a pre-Hil\-bert space $\mathcal E_\alpha=\mathcal E_\alpha(\mathbb R^n)$ with the inner product $\kappa_\alpha(\mu,\nu)$ and the norm $\|\mu\|_\alpha:=\sqrt{\kappa_\alpha(\mu,\mu)}$. The topology on $\mathcal E_\alpha$ defined by $\|\cdot\|_\alpha$ is said to be \emph{strong}.

For $A\subset\mathbb R^n$ arbitrary, we denote by $\mathfrak M^+(A)$ the cone of all positive $\mu\in\mathfrak M$ \emph{concentrated\/} on $A$, which means that $A^c:=\mathbb R^n\setminus A$ is  $\mu$-neg\-lig\-ible, or equivalently that $A$ is $\mu$-meas\-ur\-able and $\mu=\mu|_A$, where $\mu|_A$ is the restriction of $\mu$ to $A$. Write $\mathcal E^+_\alpha(A):=\mathcal E_\alpha\cap\mathfrak M^+(A)$, $\mathfrak M^+:=\mathfrak M^+(\mathbb R^n)$, and $\mathcal E^+_\alpha:=\mathcal E^+_\alpha(\mathbb R^n)$.

By Deny \cite{D1} (for $\alpha=2$, see also Cartan \cite{Ca}), the cone $\mathcal E^+_\alpha$ is strongly complete, and the strong topology on $\mathcal E^+_\alpha$ is stronger than the (induced) vague topology. This implies that for any $A\subset\mathbb R^n$ with finite \emph{inner capacity\/} $c_\alpha(A)$,
there exists the (\emph{inner\/}) \emph{equilibrium measure\/} $\gamma_A\in\mathcal E^+_\alpha$ which is uniquely determined by the two relations
\begin{align}\label{eq11}&\gamma_A(\mathbb R^n)=\|\gamma_A\|^2_\alpha=c_\alpha(A),\\
\label{eq22}&\kappa_\alpha\gamma_A(x)=1\text{ \ n.e.\ on\ }A.\end{align}
Here
\[1/c_\alpha(A):=\inf_{\mu\in\mathcal E^+_\alpha(A):\ \mu(\mathbb R^n)=1}\,\|\mu\|^2_\alpha,\]
and the abbreviation "$\mathcal U(x)$ \emph{n.e.\ on\/} $A$" means that the set of all $x\in A$ where the assertion $\mathcal U(x)$ fails has $c_\alpha(\cdot)=0$. This $\gamma_A$ can also be found as the unique solution to either of the extremal problems
\begin{align*}&\kappa_\alpha\gamma_A=\inf_{\nu\in\Theta_A}\,\kappa_\alpha\nu,\\
&\|\gamma_A\|_\alpha=\inf_{\nu\in\Gamma_A}\,\|\nu\|_\alpha,\end{align*}
where $\Theta_A$ consists of all $\nu\in\mathfrak M^+$ with $\kappa_\alpha\nu\geqslant1$ n.e.\ on $A$, and $\Gamma_A:=\Theta_A\cap\mathcal E^+_\alpha$.\footnote{This classical concept of inner Riesz equilibrium measure has been extended in \cite{Z-bal} to the case where $c_\alpha(A)$ might be infinite. See Section~\ref{sec-ext} below for some details of this generalization, and also Sections~\ref{sec-intim}, \ref{sec-desc}, \ref{sec-last-1}, and \ref{sec-last} for further relevant results.}

In what follows, when speaking of a measure $\mu\in\mathfrak M^+$, we always tacitly assume that its potential $\kappa_\alpha\mu$ is not identically infinite,
which according to \cite[Chapter~I, Section~3, n$^\circ$~7]{L} holds if and only if
\[\int_{|y|>1}\frac{d\mu(y)}{|y|^{n-\alpha}}<\infty.\]
By \cite[Chapter~III, Section~1, n$^\circ$~1]{L}, $\kappa_\alpha\mu$ is then finite, in fact, n.e.\ on $\mathbb R^n$.\footnote{Hence, $\kappa_\alpha\mu<\infty$ q.e.\ on $\mathbb R^n$, where "q.e." refers to an exceptional set of zero \emph{outer\/} capacity.}

A measure $\mu\in\mathfrak M^+$ is said to be \emph{$c_\alpha$-absolutely continuous\/} if $\mu(K)=0$ for every compact set $K\subset\mathbb R^n$ with $c_\alpha(K)=0$. This certainly occurs if $\kappa_\alpha(\mu,\mu)$ is finite or, more generally, if $\kappa_\alpha\mu$ is locally bounded (but not conversely, see \cite[pp.~134--135]{L}).

For $y\in\mathbb R^n$, define the inversion $J_y$ with respect to $S(y,1):=\{x:\ |x-y|=1\}$ mapping each point $x\ne y$ to the point $x^*=J_y(x)$ on the ray through $x$ issuing from $y$ which is uniquely determined by
\[|x-y|\cdot|x^*-y|=1.\]
This is a homeomorphism of $\mathbb R^n\setminus\{y\}$ onto itself having the property
\begin{equation}\label{inv}|x^*-z^*|=\frac{|x-z|}{|x-y||z-y|}\text{ \ for all\ }x,z\in\mathbb R^n\setminus\{y\},\end{equation}
and it can be extended to a homeomorphism of the one-point compactification $\overline{\mathbb R^n}:=\mathbb R^n\cup\{\infty_{\mathbb R^n}\}$ onto itself such that $y$ and $\infty_{\mathbb R^n}$ are mapped to each other.

To every $\nu\in\mathfrak M^+$ with $\nu(\{y\})=0$ we assign the \emph{Kelvin transform\/}
$\nu^*=\mathcal K_y\nu\in\mathfrak M^+$ (see e.g.\ \cite[Chapter IV, Section 5, n$^\circ$~19]{L}) by means of the formula
\begin{equation}\label{kelv-m}d\nu^*(x^*)=|x-y|^{\alpha-n}\,d\nu(x),\text{ \ where\ }x^*=J_y(x)\in\mathbb R^n.\end{equation}
Then $\mathcal K_y$ is an involution, i.e.
$\mathcal K_y(\mathcal K_y\nu)=\nu$, which implies in view of (\ref{kelv-m}) that
\begin{equation}\label{kelv-mmm}\nu(\mathbb R^n)=\kappa_\alpha\nu^*(y).\end{equation}
Combining (\ref{kelv-m}) and (\ref{inv}) yields
\begin{equation}\label{KP}\kappa_\alpha\nu^*(x^*)=|x-y|^{n-\alpha}\kappa_\alpha\nu(x)\text{ \ for all\ }x^*\in\mathbb R^n,\end{equation}
while multiplying (\ref{kelv-m}) by (\ref{KP}) and then integrating over $\mathbb R^n$ gives
\begin{equation}\label{K}\kappa_\alpha(\nu^*,\nu^*)=\kappa_\alpha(\nu,\nu).\end{equation}

\subsection{Some basic facts on inner Riesz balayage} For $\mu\in\mathfrak M^+$ and $A\subset\mathbb R^n$ arbitrary, $\mu^A\in\mathfrak M^+$ is said to be the (\emph{inner\/}) \emph{balayage\/} of $\mu$ to $A$ \cite[Sections~3, 4]{Z-bal} if
\[\kappa_\alpha\mu^A=\inf\,\kappa_\alpha\nu,\]
the infimum being taken over all $\nu\in\mathfrak M^+$ with the property
\[\kappa_\alpha\nu\geqslant\kappa_\alpha\mu\text{ \ n.e.\ on\ }A.\]
The balayage $\mu^A$ exists and is unique. If moreover $\mu\in\mathcal E_\alpha^+$, then actually
\begin{equation}\label{pr11}\mu^A=P_{\mathcal E_A'}\mu,\end{equation}
$P_{\mathcal E_A'}$ standing for the orthogonal projection in the pre-Hilbert space $\mathcal E_\alpha$ onto $\mathcal E_A'$, the strong closure of $\mathcal E_\alpha^+(A)$:\footnote{Being a strongly closed subset of the strongly complete cone $\mathcal E^+_\alpha$, $\mathcal E_A'$ is strongly complete. Therefore, the orthogonal projection  $P_{\mathcal E_A'}\mu$ exists \cite[Theorem~1.12.3]{E2}.}
\begin{equation*}\label{proj}\|\mu-P_{\mathcal E_A'}\mu\|_\alpha=\min_{\nu\in\mathcal E_A'}\,\|\mu-\nu\|_\alpha,
\end{equation*}
whereas for $\mu\in\mathfrak M^+$ arbitrary, $\mu^A$ can equivalently be determined by the identity
\begin{equation}\label{alternative}\kappa_\alpha(\mu^A,\sigma)=\kappa_\alpha(\mu,\sigma^A)\text{ \ for all\ }\sigma\in\mathcal E^+_\alpha,\end{equation}
$\sigma^A$ being given by (\ref{pr11}).

For the inner balayage $\mu^A$ thus introduced, we actually have
\begin{equation}\label{alternative1}\kappa_\alpha(\mu^A,\theta)=\kappa_\alpha(\mu,\theta^A)\text{ \ for all\ }\theta\in\mathfrak M^+.\end{equation}
Furthermore,\footnote{If moreover $\mu\in\mathcal E^+_\alpha$ and $A$ is closed, then (\ref{ineq1}) characterizes $\mu^A$ uniquely among the measures of the class $\mathcal E^+_\alpha(A)$. However, this no longer holds if either of these two requirements is dropped.}
\begin{align}\label{ineq1}\kappa_\alpha\mu^A&=\kappa_\alpha\mu\text{ \ n.e.\ on\ }A,\\
\label{ineq2}\kappa_\alpha\mu^A&\leqslant\kappa_\alpha\mu\text{ \ on\ }\mathbb R^n.
\end{align}
Also,\footnote{Relations (\ref{v-conv}) and (\ref{pot-conv}) justify the term "inner" balayage.}
\begin{align}&\mu^K\to\mu^A\text{ \ vaguely as\ }K\uparrow A,\label{v-conv}\\
&\kappa_\alpha\mu^K\uparrow\kappa_\alpha\mu^A\text{ \ pointwise on $\mathbb R^n$ as\ }K\uparrow A,\label{pot-conv}\end{align}
where the abbreviation "$K\uparrow A$" means that $K$ increases along the upper directed family $\mathfrak C_A$ of all compact subsets of $A$. The latter implies the monotonicity property:
\begin{equation}\label{mon-pr}\kappa_\alpha\mu^{A_1}\leqslant\kappa_\alpha\mu^{A_2}\text{ \ whenever\ }A_1\subset A_2.\end{equation}

A point $y\in\mathbb R^n$ is said to be (\emph{inner\/}) $\alpha$-\emph{re\-gu\-lar\/} for $A$ if $\varepsilon_y=(\varepsilon_y)^A=:\varepsilon_y^A$, $\varepsilon_y$ being the unit Dirac measure at $y$; the set of all these $y$ is denoted by $A^r$. Then $A^r\subset\overline{A}$,\footnote{We denote by $\overline{A}=\text{\rm Cl}_{\mathbb R^n}A$ and $\partial A$ the Euclidean closure and boundary of a set $A\subset\mathbb R^n$.} since obviously $\varepsilon_x^A\in\mathcal E^+_\alpha$ for all $x\not\in\overline{A}$.
The other points of $\overline{A}$, i.e.
\[y\in\overline{A}\setminus A^r=:A^i,\]
are said to be (\emph{inner\/}) $\alpha$-\emph{ir\-reg\-ul\-ar} for $A$. As seen from (\ref{alternative1}) with $\theta:=\varepsilon_y$,
\begin{equation}\label{reg-pot}y\in A^r\iff\kappa_\alpha\mu^A(y)=\kappa_\alpha\mu(y)\text{ \ for all\ }\mu\in\mathfrak M^+.\end{equation}

By the Winer type criterion \cite[Theorem~6.4]{Z-bal}, $y\in A^{rc}:=(A^r)^c$ if and only if
\begin{equation}\label{w}\sum_{k\in\mathbb N}\,\frac{c_\alpha(A_k)}{q^{k(n-\alpha)}}<\infty,\end{equation}
where $q\in(0,1)$ and $A_k:=A\cap\{x\in\mathbb R^n:\ q^{k+1}<|x-y|\leqslant q^k\}$, while by the Kel\-logg--Ev\-ans type theorem \cite[Theorem~6.6]{Z-bal},\footnote{Observe that both  (\ref{w}) and (\ref{KE}) refer to inner capacity; compare with the Kell\-ogg--Ev\-ans and Wiener type theorems established for \emph{outer\/} balayage (see e.g.\ \cite{Ca2,Br,Doob,BH}). Regarding (\ref{KE}), also note that the set $A^i$ may be of nonzero capacity \cite[Chapter~V, Section~4, n$^\circ$~12]{L}.}
\begin{equation}\label{KE}c_\alpha(A\cap A^i)=c_\alpha(A\setminus A^r)=0.\end{equation}

\subsection{An extended concept of inner Riesz equilibrium measure}\label{sec-ext} For $A\subset\mathbb R^n$ arbitrary, $\gamma_A\in\mathfrak M^+$ is said to be the (\emph{inner\/}) \emph{equilibrium measure\/} \cite[Section~5]{Z-bal} if ($\kappa_\alpha\gamma_A\not\equiv\infty$ and)
\[\kappa_\alpha\gamma_A=\inf\,\kappa_\alpha\nu,\]
the infimum being taken over all $\nu\in\mathfrak M^+$ with $\kappa_\alpha\nu\geqslant1$ n.e.\ on $A$. This $\gamma_A$ is certainly unique (if exists), and according to  \cite[Lemma~5.3]{Z-bal}, it can equivalently be introduced by either of the limit relations
\begin{align}&\gamma_K\to\gamma_A\text{ \ vaguely as\ }K\uparrow A,\notag\\
&\kappa_\alpha\gamma_K\uparrow\kappa_\alpha\gamma_A\text{ \ pointwise on $\mathbb R^n$ as\ }K\uparrow A,\label{v-convv-eq2}\end{align}
the (classical) equilibrium measure $\gamma_K\in\mathcal E^+_\alpha$ on $K\subset A$ compact being defined by means of both (\ref{eq11}) and (\ref{eq22}). Thus
$\kappa_\alpha\gamma_A\leqslant1$ on $\mathbb R^n$, hence $\gamma_A$ is $c_\alpha$-abs\-ol\-utely continuous, though its energy might be infinite.\footnote{In fact, either of $\kappa_\alpha(\gamma_A,\gamma_A)$ and $\gamma_A(\mathbb R^n)$ is finite if and only if $c_\alpha(A)$ is so. For more details, see Section~\ref{sec-intim} below; compare with the classical concept of inner equilibrium measure (Section~\ref{sec-pr1}).}
Furthermore \cite[Lemma~6.11]{Z-bal},
\begin{equation}\kappa_\alpha\gamma_A=1\text{ \ on\ }A^r,\label{equi1}\end{equation}
which combined with (\ref{KE}) gives
\begin{equation}\label{eq-ex0}\kappa_\alpha\gamma_A=1\text{ \ n.e.\ on\ }A.\end{equation}

Section~\ref{sec-intim} below provides a number of equivalent conditions that are necessary and sufficient for the existence of $\gamma_A$.
The approach applied there is based on a close relationship between the concept of inner equilibrium measure and that of inner balayage, described by means of (\ref{har-eq}) with the Kelvin transformation involved.

\subsection{About the results obtained}\label{sec-about} In the current study we first focus on the (\emph{inner\/}) $\alpha$-\emph{har\-mon\-ic measure\/} $\varepsilon_y^A$  of order $\alpha\in(0,2]$ for $A$ arbitrary, which is a natural generalization of the classical concept of ($2$-)har\-mo\-nic measure (see e.g.\ \cite{L,BH,KB0,KB}).

We are motivated by the known fact that the $\alpha$-har\-mo\-nic measure is the main tool in solving the generalized Dirichlet problem for $\alpha$-har\-mon\-ic functions. Indeed, if $A$ is closed while $\partial A$ compact, then for any $f\in C(\partial A)$, the function
\[h_f(y):=\int f\,d\varepsilon_y^A\]
is $\alpha$-har\-mon\-ic on $A^c$ \cite[Chapter~IV, Section~5, n$^\circ$~21]{L} and has the property
\[\lim_{y\to z,\ y\in A^c}\,h_f(y)=f(z)\text{ \ for all\ }z\in A^r,\]
cf.\ \cite[Proposition~VI.11.1]{BH}.

We verify the last relation for an arbitrary set $A\subset\mathbb R^n$ and an arbitrary test function $f\in C_0$, thereby establishing the vague continuity\footnote{For the terminology used here we refer the reader to Bourbaki \cite[Chapter~V, Section~3, n$^\circ$~1]{B2}.} of the map $y\mapsto\varepsilon_y^A$ outside the inner $\alpha$-irregular points for $A$ (Theorem~\ref{pr1}). Furthermore, we describe the Euclidean support of the inner $\alpha$-har\-mon\-ic measure $\varepsilon_y^A$ (Theorem~\ref{desc-sup}), provide a formula for evaluation of its total mass (Theorem~\ref{har-tot}), and obtain necessary and sufficient conditions for $\varepsilon_y^A$ to be of finite energy (more generally, for $\varepsilon_y^A$ to be $c_\alpha$-abs\-ol\-ut\-ely continuous) as well as for $\varepsilon_y^A(\mathbb R^n)\equiv1$ to hold (Corollaries~\ref{conc2}, \ref{seven}, and \ref{cor-en-f}; for illustration, see Example~\ref{ex}). Those criteria are given in terms of the newly defined concepts of inner $\alpha$-thin\-ness and inner $\alpha$-ul\-tra\-thin\-ness of $A$ at infinity (see Definitions~\ref{def-th} and \ref{def-th2}), which for $\alpha=2$ and $A$ Borel coincide with the concepts of outer ($2$-)thin\-ness at infinity introduced by Doob \cite[pp.~175--176]{Doob} and Brelot \cite[p.~313]{Brelot}, respectively (see Remark~\ref{rem-comp} for more details and relevant references).

In Section~\ref{sec-int} (see Corollaries~\ref{C}--\ref{c-desc}) we extend some of these results to $\mu^A$ general by means of establishing the integral representation formula
\[\mu^A=\int\varepsilon_y^A\,d\mu(y)\]
as well as the Borel measurability of the set $A^r$ (Theorems~\ref{th-int-rep} and \ref{measur}). The proofs of these two theorems are based, in turn, on the following observation (Proposition~\ref{cor-count}): there is a countable set $S\subset C_0^\infty:=C_0^\infty(\mathbb R^n)$ such that a net $(\mu_k)\subset\mathfrak M^+$ converges vaguely to $\mu_0$ if (and only if)
\[\lim_{k}\,\mu_k(f)=\mu_0(f)\text{ \ for all\ }f\in S.\]

Basically, the same observation enables us to prove that for $A$ arbitrary, there exists a $K_\sigma$-set $A_0\subset A$ such that (see Theorem~\ref{th-id-bor})
\[\mu^A=\mu^{A_0}\text{ \ for all\ }\mu\in\mathfrak M^+,\]
hence
\[A^r=(A_0)^r.\]
Compare with \cite[Proposition~VI.2.2]{BH} on the existence of a $G_\delta$-set $\hat{A}\supset A$ such that $\bar{\mu}^{\hat{A}}=\bar{\mu}^A$, where $\bar{\mu}^A$ denotes the \emph{outer\/} Riesz balayage of $\mu\in\mathfrak M^+$ to $A\subset\mathbb R^n$ investigated by Bliedtner and Hansen \cite{BH} in the general framework of balayage spaces.

We give various applications of Theorem~\ref{th-id-bor}, in particular we establish the vague and strong
continuity of the inner balayage under the exhaustion of $A$ arbitrary by $A_k:=A\cap U_k$, where $(U_k)$ is an increasing sequence of universally measurable sets with the union $\mathbb R^n$ (Theorem~\ref{pr-cont}).
Assuming additionally that the inner Riesz equilibrium measure $\gamma_A$ exists, we conclude by use of the Kelvin transformation
that there is a $K_\sigma$-set $A'\subset A$ having the properties (see Theorem~\ref{cor-eq-reg})
\[\gamma_A=\gamma_{A'}\text{ \ and \ }A^r=(A')^r.\]
Furthermore, $(\gamma_{A_k})$ converges to $\gamma_A$ vaguely, and also converges strongly if moreover $c_\alpha(A)<\infty$ (Theorem~\ref{th-cont-eq}); the latter strengthens Fuglede's result \cite[Theorem~4.2]{F1} obtained for $A$ universally measurable (for further details, see Remark~\ref{rem-ext} below).

\begin{remark}\label{rem-ou} The inner $\alpha$-har\-mon\-ic measure $\varepsilon_y^A$ for $A$ arbitrary coincides with the outer $\alpha$-har\-mon\-ic measure $\bar\varepsilon_y^{A_0}$ for $A_0$, the $K_\sigma$-sub\-set of $A$ given by Theorem~\ref{th-id-bor}:
\begin{equation}\label{intr1}\varepsilon_y^A=\varepsilon_y^{A_0}=\bar\varepsilon_y^{A_0}\text{ \ for all\ }y\in\mathbb R^n\end{equation}
(see Corollary~\ref{cor-outer}), and therefore
\begin{equation}\label{intr2}A^r=(A_0)^r=(A_0)_o^r,\end{equation}
$(A_0)_o^r$ being the set of the outer $\alpha$-regular points for $A_0$. This suggests that the results obtained in the frame of the theory of outer balayage may be useful in the investigation of inner balayage, and the other way around. For instance, due to (\ref{intr1}) and (\ref{intr2}), the vague continuity of the map $y\mapsto\varepsilon_y^A$ as $y\to z\in A^r$ (Theorem~\ref{pr1}), resp.\ the Borel measurability of the set $A^r$ (Theorem~\ref{measur}), can alternatively be proved by applying \cite[Proposition~VI.11.1]{BH}, resp.\ \cite[Proposition~VI.4.1]{BH}.\end{remark}

\begin{remark} Except for Theorem~\ref{pr1}, the main results of this paper are new even for $\alpha=2$; hence, they also present a further development of the theory of inner Newtonian capacities and of inner Newtonian balayage, originated in the pioneer works by Cartan \cite{Ca,Ca2}. (Note that Theorem~\ref{pr1} for $\alpha=2$ was established in \cite[p.~269, Theorem~5]{Ca2}. However, the proof given in \cite{Ca2} cannot be adapted to $\alpha<2$, being essentially based on specific features of the theory of Newtonian potentials.)\end{remark}

\section{$\alpha$-harmonic measure and $\alpha$-Riesz equilibrium measure}\label{sec-intim}

The study of the inner $\alpha$-harmonic measure $\varepsilon_y^A$, carried out in this section, is based on the systematic use of a close relationship between $\varepsilon_y^A$ and the inner $\alpha$-Riesz equilibrium measure for $A^*=A_y^*$, the inverse of $A\setminus\{y\}$ with respect to $S(y,1)$:\footnote{For the notation used here, see Section~\ref{sec-pr1}. When speaking of the inner Riesz equilibrium measure, we always understand it in the extended sense described in Section~\ref{sec-ext}.}
\begin{equation*}\label{inve}A^*:=A_y^*:=J_y(A\setminus\{y\}).\end{equation*}

\subsection{On the $c_\alpha$-absolute continuity of $\varepsilon_y^A$}\label{ss1}
The following theorem establishes a number of equivalent criteria for the existence of the inner equilibrium measure.

\begin{theorem}\label{th-th}For\/ $A\subset\mathbb R^n$ arbitrary, the following\/ {\rm(i)--(v)} are equivalent.\footnote{Each of these (i)--(v) is also equivalent to (vi) and (vi$'$) below (see Corollary~\ref{seven} and footnote~\ref{vi}).}
\begin{itemize}
\item[{\rm (i)}]There is the inner\/ $\alpha$-Riesz equilibrium measure\/ $\gamma_A$ for\/ $A$.
\item[{\rm (ii)}]There is\/ $\nu\in\mathfrak M^+$ with
\[\essinf_{x\in A}\,\kappa_\alpha\nu(x)>0,\]
the infimum being taken over\/ $A$ except for a subset of inner capacity zero.
\item[{\rm (iii)}]For some\/ {\rm(}equivalently, every\/{\rm)} $y\in\mathbb R^n$,
\begin{equation}\label{iii}\sum_{k\in\mathbb N}\,\frac{c_\alpha(A_k)}{q^{k(n-\alpha)}}<\infty,\end{equation}
where\/ $q\in(1,\infty)$ and\/ $A_k:=A\cap\{x\in\mathbb R^n:\ q^k\leqslant|x-y|<q^{k+1}\}$.
\item[{\rm (iv)}]For some\/ {\rm(}equivalently, every\/{\rm)} $y\in\mathbb R^n$,
\begin{equation*}\label{equiva}y\in(A_y^*)^{rc}.\end{equation*}
\item[{\rm (v)}]For some\/ {\rm(}equivalently, every\/{\rm)} $y\in\mathbb R^n$, $\varepsilon_y^{A_y^*}$ is\/ $c_\alpha$-absolutely continuous.
\end{itemize}

Furthermore, if these\/ {\rm(i)--(v)} hold, then for every\/ $y\in\mathbb R^n$, the Kelvin transform\/ $(\gamma_A)^*=\mathcal K_y\gamma_A$ of\/ $\gamma_{A}$ is actually the inner\/ $\alpha$-har\-mo\-nic measure\/ $\varepsilon_y^{A_y^*}$, i.e.
\begin{equation}\label{har-eq}\varepsilon_y^{A_y^*}=(\gamma_A)^*.\end{equation}
\end{theorem}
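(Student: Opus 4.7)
The plan is to route everything through the Kelvin inversion $\mathcal{K}_y$, which converts the ``equilibrium on $A$'' problem into the ``balayage of $\varepsilon_y$ onto $A_y^*$'' problem. I would prove the equivalences via (i) $\Leftrightarrow$ (ii), the core equivalence (i) $\Leftrightarrow$ (iv) (which simultaneously yields (\ref{har-eq})), (iv) $\Leftrightarrow$ (iii), and (iv) $\Leftrightarrow$ (v). The ``some (equivalently, every) $y$'' clauses in (iii)--(v) then come automatically from the $y$-independence of (i)--(ii).

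For (i) $\Rightarrow$ (ii) take $\nu:=\gamma_A$ and invoke (\ref{eq-ex0}). For (ii) $\Rightarrow$ (i), after scaling so that $\kappa_\alpha\nu\geqslant1$ n.e.\ on $A$, $\nu$ becomes a competitor in the infimum defining $\gamma_A$, so that $\kappa_\alpha\gamma_K\leqslant\kappa_\alpha\nu$ for every $K\in\mathfrak{C}_A$; passing to the limit via \cite[Lemma~5.3]{Z-bal} (cited as (\ref{v-convv-eq2})) yields $\gamma_A$.

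The core step is a symmetric Kelvin-minimization. By (\ref{KP}), the map $\nu\mapsto\nu^*=\mathcal{K}_y\nu$ is a bijection (modulo the inner-capacity-zero point $\{y\}$) between competitors of the infimum defining $\gamma_A$ (``$\kappa_\alpha\nu\geqslant1$ n.e.\ on $A$'') and those of the infimum defining $\varepsilon_y^{A_y^*}$ (``$\kappa_\alpha\nu^*\geqslant\kappa_\alpha\varepsilon_y$ n.e.\ on $A_y^*$''), changing potentials by the positive factor $|x-y|^{n-\alpha}$. Since this factor pulls out of the infimum, one obtains
\[\kappa_\alpha\varepsilon_y^{A_y^*}(x^*)=|x-y|^{n-\alpha}\inf_{\nu}\kappa_\alpha\nu(x),\]
where the right-hand infimum is the one defining $\gamma_A$. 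Equivalently, $\mathcal{K}_y\varepsilon_y^{A_y^*}$ realizes that infimum pointwise as a potential, provided $\varepsilon_y^{A_y^*}(\{y\})=0$. By the standard dichotomy $\varepsilon_y^{A_y^*}=\varepsilon_y \Leftrightarrow y\in(A_y^*)^r$, the condition $\varepsilon_y^{A_y^*}(\{y\})=0$ is equivalent to (iv); combining, (i) $\Leftrightarrow$ (iv), and (\ref{har-eq}) follows as $\mathcal{K}_y\varepsilon_y^{A_y^*}=\gamma_A$, i.e.\ $\varepsilon_y^{A_y^*}=(\gamma_A)^*$ by involution.

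For (iv) $\Leftrightarrow$ (iii) I would apply the Wiener criterion (\ref{w}) to $A_y^*$ at $y$ with parameter $q'\in(0,1)$: the shell $A_y^*\cap\{q'^{k+1}<|x^*-y|\leqslant q'^k\}$ equals $J_y(A_k)$ with $q=1/q'\in(1,\infty)$ and $A_k$ as in (iii). Using the Kelvin capacity scaling $c_\alpha(J_y(E))\asymp r^{-2(n-\alpha)}c_\alpha(E)$ for $E\subset\{|x-y|\asymp r\}$---derived from (\ref{K}), (\ref{kelv-m}), (\ref{kelv-mmm}) by normalizing equilibrium measures on annuli---the Wiener sum for $A_y^*$ at $y$ becomes $\sum_k q^{-k(n-\alpha)}c_\alpha(A_k)$, matching (\ref{iii}). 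Finally, (iv) $\Leftrightarrow$ (v) reduces to the same dichotomy: if $y\in(A_y^*)^r$ then $\varepsilon_y^{A_y^*}=\varepsilon_y$ carries unit mass on the capacity-zero set $\{y\}$; conversely, (iv) combined with (\ref{har-eq}) transports the $c_\alpha$-absolute continuity of $\gamma_A$ (noted in Section~\ref{sec-ext}) through $\mathcal{K}_y$. The main obstacle I anticipate is the careful bookkeeping of inner-capacity-zero exceptional subsets of $\mathbb{R}^n\setminus\{y\}$ under $\mathcal{K}_y$, used throughout, which I would settle by a dyadic annular decomposition together with the energy-invariance (\ref{K}), coupled with the appeal to \cite[Lemma~5.3]{Z-bal} in the existence direction of (i) $\Leftrightarrow$ (ii).
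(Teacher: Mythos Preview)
Your approach is essentially the paper's: both route the equivalences through the Kelvin correspondence between the equilibrium problem on $A$ and the balayage of $\varepsilon_y$ onto $A_y^*$, both obtain (iii) $\Leftrightarrow$ (iv) by feeding the capacity estimate under inversion into the Wiener criterion~(\ref{w}), and both deduce (v) from (iv) by transporting the $c_\alpha$-absolute continuity of $\gamma_A$ through $\mathcal K_y$. The only organizational difference is that the paper cites \cite[Theorem~5.5, Lemma~6.8]{Z-bal} for (i) $\Leftrightarrow$ (iii) and for~(\ref{har-eq}), whereas you re-derive these directly via the bijection of competitor classes; your chain (i) $\Leftrightarrow$ (iv) $\Leftrightarrow$ (iii) versus the paper's (i) $\Leftrightarrow$ (iii) $\Leftrightarrow$ (iv) is immaterial.

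One step needs more than you give it. What you call the ``standard dichotomy $\varepsilon_y^{A_y^*}=\varepsilon_y\Leftrightarrow y\in(A_y^*)^r$'' is just the definition of regularity and yields only $\varepsilon_y^{A_y^*}(\{y\})=0\Rightarrow\text{(iv)}$. The reverse implication---that $y\notin(A_y^*)^r$ forces $\varepsilon_y^{A_y^*}(\{y\})=0$, i.e.\ no partial atom can occur---is precisely what makes your bijection of competitor classes two-sided and hence what carries the direction (i) $\Rightarrow$ (iv). It is true but requires an argument: writing $\varepsilon_y^{A_y^*}=c\,\varepsilon_y+\tau$ with $\tau(\{y\})=0$ and $0<c<1$, one checks via (\ref{ineq1})--(\ref{ineq2}) that $\tau/(1-c)$ is again a competitor, and the minimality of $\kappa_\alpha\varepsilon_y^{A_y^*}$ then forces $\kappa_\alpha\varepsilon_y^{A_y^*}=\kappa_\alpha\varepsilon_y$, hence $\varepsilon_y^{A_y^*}=\varepsilon_y$, contradicting $c<1$. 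You should spell this out rather than fold it into the definition; the paper avoids the issue by invoking \cite[Lemma~6.8]{Z-bal} directly.
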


\begin{proof} This theorem can in fact be derived from \cite[Sections~5, 6]{Z-bal}. Indeed, according to \cite[Lemma~5.3]{Z-bal}, (i) holds if and only if there is $\nu\in\mathfrak M^+$ with $\kappa_\alpha\nu\geqslant1$ n.e.\ on $A$, which by homogeneity reasons is equivalent to (ii). Noting now that the series in (\ref{iii}) converges (or does not converge) simultaneously for all $y\in\mathbb R^n$, we conclude from \cite[Theorem~5.5]{Z-bal} that (i) is also equivalent to (iii).

Further, let $A_k$ be as in (iii), $q\in(1,\infty)$ and $y\in\mathbb R^n$ being arbitrarily chosen. Denoting $x^*:=J_y(x)$ for $x\ne y$, we get from (\ref{inv})
\[q^{-2k-2}|x-z|\leqslant|x^*-z^*|\leqslant q^{-2k}|x-z|\text{ \ for all\ }x,z\in A_k;\]
hence, by \cite[Remark to Theorem~2.9]{L},
\begin{equation}\label{est}q^{-(2k+2)(n-\alpha)}c_\alpha(A_k)\leqslant c_\alpha(A_k^*)\leqslant q^{-2k(n-\alpha)}c_\alpha(A_k),\end{equation}
where \[A_k^*:=J_y(A_k)=A_y^*\cap\bigl\{x\in\mathbb R^n:\ q^{-k-1}<|x-y|\leqslant q^{-k}\bigr\}.\]
Therefore (\ref{iii}) holds if and only if
\[\sum_{k\in\mathbb N}\,q^{k(n-\alpha)}c_\alpha(A_k^*)<\infty,\]
which, by the Winer type criterion of inner $\alpha$-ir\-reg\-ul\-ar\-ity (see (\ref{w})), is equivalent to the inclusion $y\in(A_y^*)^{rc}$. Thus (iii)$\iff$(iv).

Moreover, if a given $y$ is not $\alpha$-reg\-ular for $A_y^*$, or equivalently if the equilibrium measure $\gamma_A$ exists, then, by \cite[Lemma~6.8]{Z-bal}, the $\alpha$-har\-mo\-nic measure $\varepsilon_y^{A_y^*}$ is actually the Kelvin transform of $\gamma_A$, i.e.\ (\ref{har-eq}) holds. Hence $\varepsilon_y^{A_y^*}$ along with $\gamma_A$ is $c_\alpha$-ab\-sol\-utely continuous, the inverse of any $E\subset\mathbb R^n$ with $c_\alpha(E)=0$ having again zero inner capacity \cite[p.~261]{L}, and so (iv) indeed implies (v). The converse is obvious, for if (iv) were not true, $\varepsilon_y^{A_y^*}$ would not be $c_\alpha$-ab\-sol\-utely continuous, being equal to $\varepsilon_y$. \end{proof}

This leads us naturally to the following definition (compare with the definition of outer ($2$-)thin\-ness at infinity by Doob \cite[pp.~175--176]{Doob}).

\begin{definition}\label{def-th} $A\subset\mathbb R^n$ is said to be (\emph{inner\/}) \emph{$\alpha$-thin
at infinity\/} if assertions (i)--(v) in Theorem~\ref{th-th} hold true.\footnote{The concept of $\alpha$-thinness at infinity seems to appear first in our earlier work \cite{Z1}, where $A$ was closed. Due to its intimate relation with balayage, this concept plays an important role in the investigation of condenser problems (see e.g.\ recent papers \cite{DFHSZ2,FZ-Pot1,FZ-Pot2,Z-arx,DFHSZ1}).}\end{definition}

\begin{remark}\label{comment}Thus $A$ is \emph{not\/} $\alpha$-thin at infinity if for \emph{some\/} (equivalently, \emph{every\/}) inversion $J_y$, the finite point $y=J_y(\infty_{\mathbb R^n})$ is $\alpha$-\emph{re\-gu\-lar\/} for $A_y^*$, the $J_y$-im\-age of $A\setminus\{y\}$. If this holds for \emph{no\/} inversion $J_y$, the set $A$ is $\alpha$-thin at infinity.\end{remark}

A criterion for $\varepsilon_y^A$ to be $c_\alpha$-ab\-sol\-ut\-ely continuous now reads as follows.

\begin{corollary}\label{conc2}For any\/ $A\subset\mathbb R^n$ and\/ $y\in\mathbb R^n$, $\varepsilon_y^A$ is\/ $c_\alpha$-ab\-sol\-ut\-ely continuous if and only if\/ $A_y^*$ is\/ $\alpha$-thin at infinity, or equivalently\/ $y\in A^{rc}$. In the affirmative case,
\[\varepsilon_y^A=\mathcal K_y\gamma_{A_y^*}.\]
\end{corollary}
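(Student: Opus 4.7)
The plan is to obtain the corollary by applying Theorem \ref{th-th} to the set $A_y^*$ (in place of $A$) and then using the involutory property of the Kelvin inversion $J_y$ together with the negligibility of the singleton $\{y\}$ in the inner capacity.

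First I would set $B:=A_y^*$ and invoke Theorem \ref{th-th} at the same point $y$. Since $J_y$ maps $\mathbb{R}^n\setminus\{y\}$ into itself, we have $y\notin B$, and the involutory property of $J_y$ yields
\[B_y^*=J_y(B\setminus\{y\})=J_y(J_y(A\setminus\{y\}))=A\setminus\{y\}.\]
The equivalences (i)$\iff$(iv)$\iff$(v) for $B$ thus read, in view of Definition \ref{def-th},
\[A_y^*\text{ is $\alpha$-thin at infinity}\iff y\in(A\setminus\{y\})^{rc}\iff\varepsilon_y^{A\setminus\{y\}}\text{ is $c_\alpha$-absolutely continuous},\]
while (\ref{har-eq}) applied to $B$ gives $\varepsilon_y^{A\setminus\{y\}}=\mathcal{K}_y\gamma_{A_y^*}$ in the affirmative case.

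The remaining task is to bridge $A\setminus\{y\}$ and $A$. Since $c_\alpha(\{y\})=0$, the qualifier "n.e.\ on $A$" is logically identical with "n.e.\ on $A\setminus\{y\}$"; consequently the variational definition of inner balayage gives $\kappa_\alpha\varepsilon_y^A=\kappa_\alpha\varepsilon_y^{A\setminus\{y\}}$, whence, by uniqueness of a measure with a prescribed Riesz potential, $\varepsilon_y^A=\varepsilon_y^{A\setminus\{y\}}$. Likewise, every annular shell in the Wiener-type criterion (\ref{w}) excludes the centre $y$ by construction, so the Wiener series at $y$ is unchanged when $y$ is removed from $A$, yielding $y\in A^{rc}\iff y\in(A\setminus\{y\})^{rc}$. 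Substituting these two identifications into the chain above closes the triangle of equivalences and promotes $\varepsilon_y^{A\setminus\{y\}}=\mathcal{K}_y\gamma_{A_y^*}$ to $\varepsilon_y^A=\mathcal{K}_y\gamma_{A_y^*}$.

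The whole argument is essentially a bookkeeping exercise on top of Theorem \ref{th-th}; the only point that requires attention—and which I expect to be the sole source of friction—is the absorption of $\{y\}$ when passing from $A$ to $A\setminus\{y\}$ in both the defining infimum of balayage and in the Wiener series. It is precisely this observation that allows the content of Theorem \ref{th-th}, formulated for a set which does not contain its inversion centre, to be converted into a statement for arbitrary $A\subset\mathbb{R}^n$.
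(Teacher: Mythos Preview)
Your proposal is correct and follows exactly the route the paper takes: the paper's entire proof is the single sentence ``This follows from Theorem~\ref{th-th} with $A$ and $A_y^*$ interchanged.'' You have simply made explicit the one technical point the paper leaves tacit, namely that $(A_y^*)_y^*=A\setminus\{y\}$ rather than $A$, and that removing the capacity-zero singleton $\{y\}$ affects neither the variational definition of balayage nor the Wiener series; this bookkeeping is handled correctly.
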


\begin{proof}This follows from Theorem~\ref{th-th} with $A$ and $A_y^*$ interchanged.\end{proof}

\subsection{On the total mass of $\varepsilon_y^A$} We now establish a formula for evaluation of the total mass of the inner $\alpha$-harmonic measure $\varepsilon_y^A$ for $A\subset\mathbb R^n$ arbitrary, and give necessary and sufficient conditions for $\varepsilon_y^A(\mathbb R^n)\equiv1$ to hold.\footnote{In general, $\varepsilon_y^A(\mathbb R^n)\leqslant1$ \cite[Corollary~4.9]{Z-bal}.}

\begin{theorem}\label{har-tot} For every\/ $y\in\mathbb R^n$,
\begin{equation}\label{e-har-tot}\varepsilon_y^A(\mathbb R^n)=\left\{
\begin{array}{ll}\kappa_\alpha\gamma_A(y)&\text{if\/ $A$ is\/ $\alpha$-thin at infinity},\\
1&\text{otherwise}.\\ \end{array} \right.
\end{equation}
\end{theorem}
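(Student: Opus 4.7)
The plan is to show, in full generality, $\varepsilon_y^A(\mathbb R^n)=\lim_{K\uparrow A}\kappa_\alpha\gamma_K(y)$ and then to evaluate this monotone limit in each case. The case $y\in A^r$ is trivial (both sides of (\ref{e-har-tot}) equal $1$, the first by (\ref{equi1}) whenever $\gamma_A$ exists), so I restrict to $y\in A^{rc}$ and let $K$ range over compact subsets of $A\setminus\{y\}$ (using the polarity of $\{y\}$, which ensures this yields the same monotone limit as the unrestricted exhaustion). For the lower bound, apply (\ref{alternative1}) with $\mu:=\varepsilon_y$ and $\theta:=\gamma_K$: since $\gamma_K\in\mathcal E_\alpha^+(K)\subset\mathcal E_A'$, the projection formula (\ref{pr11}) yields $(\gamma_K)^A=\gamma_K$, and hence
\[
\int\kappa_\alpha\gamma_K\,d\varepsilon_y^A=\kappa_\alpha(\varepsilon_y^A,\gamma_K)=\kappa_\alpha\gamma_K(y).
\]
Since $\kappa_\alpha\gamma_K\leqslant 1$ on $\mathbb R^n$, this gives $\kappa_\alpha\gamma_K(y)\leqslant\varepsilon_y^A(\mathbb R^n)$, and taking the supremum over $K$ yields $\lim_K\kappa_\alpha\gamma_K(y)\leqslant\varepsilon_y^A(\mathbb R^n)$. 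For the upper bound, the same computation with $A:=K$ gives $\int\kappa_\alpha\gamma_K\,d\varepsilon_y^K=\kappa_\alpha\gamma_K(y)$; since $y\notin K$ renders $\kappa_\alpha\varepsilon_y$, and hence $\kappa_\alpha\varepsilon_y^K$ by (\ref{ineq2}), bounded on $\mathrm{supp}\,\varepsilon_y^K\subset K$, the measure $\varepsilon_y^K$ has finite energy and is thus $c_\alpha$-absolutely continuous, and combining with $\kappa_\alpha\gamma_K=1$ n.e.\ on $K$ from (\ref{eq-ex0}) yields the key identity $\varepsilon_y^K(\mathbb R^n)=\kappa_\alpha\gamma_K(y)$. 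Then (\ref{v-conv}) together with vague lower semicontinuity of total mass gives $\varepsilon_y^A(\mathbb R^n)\leqslant\lim_K\kappa_\alpha\gamma_K(y)$.

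It remains to evaluate $\lim_{K\uparrow A}\kappa_\alpha\gamma_K(y)$. If $A$ is $\alpha$-thin at infinity, (\ref{v-convv-eq2}) immediately gives $\kappa_\alpha\gamma_K(y)\uparrow\kappa_\alpha\gamma_A(y)$, which establishes the first branch of (\ref{e-har-tot}). Otherwise, Remark~\ref{comment} yields $y\in(A_y^*)^r$, so $\varepsilon_y^{A_y^*}=\varepsilon_y$; exhausting $A_y^*$ by compact $L$ with $y\notin L$ and invoking the key identity, $\kappa_\alpha\gamma_L(y)=\varepsilon_y^L(\mathbb R^n)\to 1$ by (\ref{v-conv}). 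To transfer this to the $A$-side, (\ref{har-eq}) gives $\varepsilon_y^{L_y^*}=\mathcal K_y\gamma_L$, so by (\ref{kelv-mmm}) $\varepsilon_y^{L_y^*}(\mathbb R^n)=\kappa_\alpha\gamma_L(y)$; combining this with the key identity applied to $L_y^*$ yields the Kelvin identity
\[
\kappa_\alpha\gamma_L(y)=\kappa_\alpha\gamma_{L_y^*}(y).
\]
Since $L\uparrow A_y^*$ corresponds to $L_y^*\uparrow A\setminus\{y\}$, the convergence $\kappa_\alpha\gamma_K(y)\to 1$ as $K\uparrow A$ follows, and hence $\varepsilon_y^A(\mathbb R^n)=1$.

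The main obstacle I anticipate is the non-thin case: it hinges on the Kelvin-transform identity $\kappa_\alpha\gamma_L(y)=\kappa_\alpha\gamma_{L_y^*}(y)$, obtained by combining (\ref{har-eq}) with the compact-case key identity, and on the polar-singleton cofinality argument ensuring that exhaustion via compact sets avoiding $y$ produces the correct limit.
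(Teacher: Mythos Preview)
Your argument is correct. In the $\alpha$-thin case it is essentially the paper's proof: both establish the ``key identity'' $\varepsilon_y^K(\mathbb R^n)=\kappa_\alpha\gamma_K(y)$ for compact $K$ avoiding $y$ and then sandwich $\varepsilon_y^A(\mathbb R^n)$ between $\limsup$ and $\liminf$ of $\kappa_\alpha\gamma_K(y)$, the only difference being cosmetic (you get the lower bound from $(\gamma_K)^A=\gamma_K$ and $\kappa_\alpha\gamma_K\leqslant1$, whereas the paper uses $\varepsilon_y^K=(\varepsilon_y^A)^K$ together with mass monotonicity under balayage).

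In the non-$\alpha$-thin case, however, the paper takes a considerably shorter route. Since $y\in A^{rc}$, Theorem~\ref{th-th} (with $A$ and $A_y^*$ interchanged) says that $A_y^*$ is $\alpha$-thin at infinity and that $\gamma_{A_y^*}=\mathcal K_y\varepsilon_y^A$; then (\ref{kelv-mmm}) gives directly
\[
\varepsilon_y^A(\mathbb R^n)=\kappa_\alpha\gamma_{A_y^*}(y),
\]
and since $A$ not $\alpha$-thin means $y\in(A_y^*)^r$, (\ref{equi1}) yields $\kappa_\alpha\gamma_{A_y^*}(y)=1$. Your approach reaches the same conclusion but by a longer path: you first reduce to the compact-exhaustion limit $\lim_K\kappa_\alpha\gamma_K(y)$, then prove a compact-level Kelvin identity $\kappa_\alpha\gamma_L(y)=\kappa_\alpha\gamma_{L_y^*}(y)$ and use it to transfer the convergence $\varepsilon_y^L(\mathbb R^n)\to1$ (on the $A_y^*$-side) back to the $A$-side. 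This is valid, but the paper's argument shows that once one has the set-level identity $\gamma_{A_y^*}=\mathcal K_y\varepsilon_y^A$ from (\ref{har-eq}), no exhaustion or compact-level transfer is needed at all---the total mass of $\varepsilon_y^A$ is \emph{immediately} the value of an equilibrium potential at a regular point.
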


\begin{proof}Assume first that $A$ is $\alpha$-thin at infinity, or equivalently that the equilibrium measure $\gamma_A$ exists. As $\kappa_\alpha\gamma_A=1$ on $A^r$ by (\ref{equi1}), we may verify the former equality in (\ref{e-har-tot}) only for $y\not\in A^{r}$. For any $K\subset A$ compact, then obviously $y\not\in K^{r}$, hence $\varepsilon_y^K$ is $c_\alpha$-ab\-sol\-utely continuous (Corollary~\ref{conc2}). This implies, in turn, that $\kappa_\alpha\gamma_K=1$ $\varepsilon_y^K$-a.e., $\varepsilon_y^K$ being supported by $K$, and consequently
\[\varepsilon_y^K(\mathbb R^n)=\int1\,d\varepsilon_y^K=\int\kappa_\alpha\gamma_K\,d\varepsilon_y^K=\int\kappa_\alpha(\gamma_K)^K\,d\varepsilon_y=\int\kappa_\alpha\gamma_K\,d\varepsilon_y
=\kappa_\alpha\gamma_K(y),\]
the third equality being true by (\ref{alternative}) with $\mu:=\varepsilon_y$ and $\sigma:=\gamma_K$. Therefore,
\begin{align*}\limsup_{K\uparrow A}\,\kappa_\alpha\gamma_K(y)&=\limsup_{K\uparrow A}\,\varepsilon_y^K(\mathbb R^n)\leqslant\varepsilon_y^A(\mathbb R^n)\\
&{}\leqslant\liminf_{K\uparrow A}\,\varepsilon_y^K(\mathbb R^n)=\liminf_{K\uparrow A}\,\kappa_\alpha\gamma_K(y)=\kappa_\alpha\gamma_A(y),\end{align*}
where the former inequality holds because, by \cite[Corollaries~4.2, 4.9]{Z-bal},
\[\varepsilon_y^K(\mathbb R^n)=(\varepsilon_y^A)^K(\mathbb R^n)\leqslant(\varepsilon_y^A)(\mathbb R^n),\]
the latter inequality is obtained from (\ref{v-conv}) with $\mu:=\varepsilon_y$ in view of the vague lower semicontinuity of the map $\mu\mapsto\int1\,d\mu$ on $\mathfrak M^+$, and the last equality is valid by (\ref{v-convv-eq2}). This establishes the first equality in (\ref{e-har-tot}).

Assume now that $A$ is not $\alpha$-thin at infinity. We may certainly verify the claimed equality $\varepsilon_y^A(\mathbb R^n)=1$ only for $y\in A^{rc}$, for otherwise it is obvious by definition. But then, by Theorem~\ref{th-th} with $A$ and $A_y^*$ interchanged, $A_y^*$ has the equilibrium measure $\gamma_{A_y^*}$,
and moreover $\gamma_{A_y^*}=\mathcal K_y\varepsilon_y^A$. Applying (\ref{kelv-mmm}) with $\nu:=\varepsilon_y^A$ therefore gives
\[\varepsilon_y^A(\mathbb R^n)=\kappa_\alpha\gamma_{A_y^*}(y).\]
The set $A$ being not $\alpha$-thin at infinity implies $y\in(A_y^*)^r$ (Remark~\ref{comment}). By (\ref{equi1}),
\[\kappa_\alpha\gamma_{A_y^*}(y)=1,\]
which substituted into the preceding display completes the proof.\end{proof}

\begin{corollary}\label{seven} $A\subset\mathbb R^n$ is\/ $\alpha$-thin at infinity if and only if the following holds:\footnote{In fact, (vi) is equivalent to the following apparently stronger assertion:
\begin{itemize}
\item[{\rm (vi$'$)}]$c_\alpha(\{y\in\mathbb R^n:\ \varepsilon_y^A(\mathbb R^n)<1\})>0$.
\end{itemize}\label{vi}}
\begin{itemize}
\item[{\rm (vi)}]There is\/ $y\in\mathbb R^n$ with\/
$\varepsilon_y^A(\mathbb R^n)<1$.
\end{itemize}
\end{corollary}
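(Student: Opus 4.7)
The strategy is the chain $\text{$A$ is $\alpha$-thin at infinity}\Rightarrow\text{(vi$'$)}\Rightarrow\text{(vi)}\Rightarrow\text{$A$ is $\alpha$-thin at infinity}$, which simultaneously settles the corollary and the footnote enhancement (vi$'$). Two of these three implications are essentially free: (vi$'$)$\Rightarrow$(vi) is trivial because a set of positive inner capacity is non-empty, while (vi)$\Rightarrow$$\alpha$-thinness is the contrapositive of the second case of Theorem~\ref{har-tot}, which gives $\varepsilon_y^A(\mathbb R^n)=1$ for \emph{every} $y\in\mathbb R^n$ as soon as $A$ is not $\alpha$-thin at infinity.

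The substance of the proof is therefore the remaining implication $\alpha$-thinness $\Rightarrow$ (vi$'$). Assuming $A$ is $\alpha$-thin at infinity, Theorem~\ref{th-th} supplies the equilibrium measure $\gamma_A$, and we already know $\kappa_\alpha\gamma_A\leqslant 1$ on all of $\mathbb R^n$. The first case of Theorem~\ref{har-tot} rewrites the set of interest as
\[E:=\{y\in\mathbb R^n:\,\varepsilon_y^A(\mathbb R^n)<1\}=\{y\in\mathbb R^n:\,\kappa_\alpha\gamma_A(y)<1\},\]
so the task is to show $c_\alpha(E)>0$. I would argue by contradiction: if $c_\alpha(E)=0$, then combined with the pointwise bound $\kappa_\alpha\gamma_A\leqslant 1$ this forces $\kappa_\alpha\gamma_A=1$ n.e.\ on $\mathbb R^n$, which is precisely the statement that $\nu:=\gamma_A$ verifies condition (ii) of Theorem~\ref{th-th} applied with $\mathbb R^n$ in place of $A$. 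The equivalence (ii)$\iff$(i) of that theorem would then provide an equilibrium measure $\gamma_{\mathbb R^n}$, i.e.\ would force $\mathbb R^n$ itself to be $\alpha$-thin at infinity.

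The desired contradiction is the direct observation that $\mathbb R^n$ is \emph{not} $\alpha$-thin at infinity: for any $y\in\mathbb R^n$ and $q>1$, the shells $A_k=\{x:q^k\leqslant|x-y|<q^{k+1}\}$ satisfy $c_\alpha(A_k)\asymp q^{k(n-\alpha)}$ by homogeneity of the Riesz capacity, so the Wiener series in Theorem~\ref{th-th}(iii) diverges and (i) fails for $\mathbb R^n$. The only mildly delicate point I foresee is the translation step ``$\kappa_\alpha\gamma_A=1$ n.e.\ on $\mathbb R^n$'' $\Rightarrow$ ``condition (ii) holds for $\mathbb R^n$ with witness $\nu=\gamma_A$'', which is purely a matter of unpacking the essinf in Theorem~\ref{th-th}(ii), the inner-capacity-zero exceptional set there absorbing the set where $\kappa_\alpha\gamma_A<1$. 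Beyond that, no genuine obstacle is expected.
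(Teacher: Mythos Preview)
Your proposal is correct and follows essentially the same approach as the paper: both use Theorem~\ref{har-tot} to reduce the question to showing that $\kappa_\alpha\gamma_A=1$ cannot hold n.e.\ on $\mathbb R^n$, and both obtain the contradiction by observing that $\gamma_A$ would then serve as an equilibrium measure for $\mathbb R^n$, which the Wiener-type criterion (iii) of Theorem~\ref{th-th} forbids. Your write-up is simply more explicit about the chain through (vi$'$), which the paper's terse proof covers implicitly.
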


\begin{proof}This follows directly from (\ref{e-har-tot}) because $\kappa_\alpha\gamma_A\ne1$ n.e.\ on $\mathbb R^n$. Indeed, if not, $\gamma_A$ would serve simultaneously as the equilibrium measure on the whole of $\mathbb R^n$, which is however impossible by (\ref{iii}) applied to $\mathbb R^n$.\end{proof}

\subsection{When does $\varepsilon_y^A\in\mathcal E^+_\alpha$ hold?}\label{ss2}
Corollary~\ref{cor-en-f} below provides criteria for the inner $\alpha$-har\-mo\-nic measure $\varepsilon_y^A$ to be of finite energy, thereby specifying Corollary~\ref{conc2} on the $c_\alpha$-abs\-ol\-ute continuity of $\varepsilon_y^A$ for $y\in A^{rc}$.

\begin{lemma}\label{cap-f1}For any\/ $A_1,A_2\subset\mathbb R^n$ with the Euclidean distance\/ $d>0$,
\begin{equation}\label{estt}c_\alpha(A_1)+c_\alpha(A_2)\leqslant c_\alpha(A_1\cup A_2)\left[1+\frac{\max\,\{c_\alpha(A_1),c_\alpha(A_2)\}}{d^{n-\alpha}}\right].\end{equation}
\end{lemma}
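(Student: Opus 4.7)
My plan is to reduce to the compact case via the inner regularity of $c_\alpha$, and then exploit the equilibrium measures $\gamma_{A_1}$, $\gamma_{A_2}$ together with the geometric separation of $A_1$ and $A_2$.

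Assume first that $A_1$ and $A_2$ are compact of finite capacity. I would consider the classical equilibrium measures $\gamma_{A_i}\in\mathcal E_\alpha^+$, of total mass $c_\alpha(A_i)$, as given by (\ref{eq11})--(\ref{eq22}), and set $\mu:=\gamma_{A_1}+\gamma_{A_2}$. Since $\gamma_{A_2}$ is concentrated on $A_2$, with $|x-y|\geqslant d$ for every $x\in A_1$ and $y\in A_2$, and since $\alpha<n$, one has
\[\kappa_\alpha\gamma_{A_2}(x)\leqslant d^{\alpha-n}\,\gamma_{A_2}(\mathbb R^n)=d^{\alpha-n}c_\alpha(A_2)\text{ \ on \ }A_1,\]
and symmetrically on $A_2$. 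Combined with $\kappa_\alpha\gamma_{A_i}=1$ n.e.\ on $A_i$ (cf.\ (\ref{eq22})), this yields
\[\kappa_\alpha\mu\leqslant M:=1+d^{\alpha-n}\max\{c_\alpha(A_1),c_\alpha(A_2)\}\text{ \ n.e.\ on \ }A_1\cup A_2.\]

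Next I would extract the mass comparison by applying Fubini to the pair $\gamma_{A_1\cup A_2}$ and $\mu/M$:
\[\int\kappa_\alpha\gamma_{A_1\cup A_2}\,d(\mu/M)=\int\kappa_\alpha(\mu/M)\,d\gamma_{A_1\cup A_2}.\]
On the left, $\kappa_\alpha\gamma_{A_1\cup A_2}=1$ n.e.\ on $A_1\cup A_2$ by (\ref{eq22}), and $\mu\in\mathcal E_\alpha^+$ (being a sum of two measures of finite energy) is $c_\alpha$-absolutely continuous, so the exceptional set is $\mu$-negligible; thus the left-hand side equals $\mu(\mathbb R^n)/M=(c_\alpha(A_1)+c_\alpha(A_2))/M$. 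On the right, $\kappa_\alpha(\mu/M)\leqslant1$ n.e.\ on $A_1\cup A_2$, and $\gamma_{A_1\cup A_2}$ is likewise $c_\alpha$-absolutely continuous, so the right-hand side is bounded by $\gamma_{A_1\cup A_2}(\mathbb R^n)=c_\alpha(A_1\cup A_2)$. Rearranging delivers (\ref{estt}) for compact $A_1$, $A_2$.

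For arbitrary $A_1$, $A_2$ with $d>0$, I would choose compact sets $K_i\subset A_i$; since $\mathrm{dist}(K_1,K_2)\geqslant d$, the compact case supplies (\ref{estt}) for $K_1, K_2$, and then letting $K_i\uparrow A_i$ uses $c_\alpha(K_i)\uparrow c_\alpha(A_i)$ together with the inclusion $K_1\cup K_2\subset A_1\cup A_2$ and monotonicity of $c_\alpha$ on the right-hand side to pass to the limit. (If either $c_\alpha(A_i)$ is infinite, so is $c_\alpha(A_1\cup A_2)$, and the stated inequality holds trivially.) I expect the main delicate point to be the Fubini step: one must verify that the exceptional sets where the pointwise potential inequalities fail are negligible for both $\mu$ and $\gamma_{A_1\cup A_2}$, which is automatic from their $c_\alpha$-absolute continuity.
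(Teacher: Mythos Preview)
Your proof is correct and follows essentially the same route as the paper: bound the potential of $\gamma_{K_1}+\gamma_{K_2}$ by $M$ on $K_1\cup K_2$, convert this into the mass inequality $c_\alpha(K_1\cup K_2)\geqslant M^{-1}(\gamma_{K_1}+\gamma_{K_2})(\mathbb R^n)$, and pass to arbitrary $A_i$ by inner regularity. The only cosmetic difference is that where you carry out the Fubini/duality computation with $\gamma_{A_1\cup A_2}$ explicitly, the paper simply cites the Vall\'ee-Poussin characterization of capacity for compact sets \cite[p.~139, Remark]{L}, which packages that same computation; also, since $\kappa_\alpha\gamma_{K_i}\leqslant1$ holds everywhere (not just n.e.), the paper obtains the pointwise bound on all of $K_1\cup K_2$ and avoids tracking the exceptional sets you handle via $c_\alpha$-absolute continuity.
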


\begin{proof}Of course, we can suppose $c_\alpha(A_i)<\infty$ ($i=1,2$). For any $K_i\subset A_i$ compact,
\[\kappa_\alpha(\gamma_{K_1}+\gamma_{K_2})\leqslant M\text{ \ on\ }K_1\cup K_2,\]
where $M$ denotes the value in the square brackets in (\ref{estt}).
By the Val\-l\'{e}e-Pou\-ssin definition of capacity for a compact set \cite[p.~139, Remark]{L}, we thus have
\[c_\alpha(K_1\cup K_2)\geqslant M^{-1}(\gamma_{K_1}+\gamma_{K_2})(\mathbb R^n),\]
which results in (\ref{estt}) by letting $K_i\uparrow A_i$ ($i=1,2$) because $\gamma_{K_i}(\mathbb R^n)\uparrow\gamma_{A_i}(\mathbb R^n)$.\end{proof}

\begin{theorem}\label{th-f-en}For\/ $A\subset\mathbb R^n$ arbitrary, the following\/ {\rm(i$_1$)--(iv$_1$)} are equivalent.
\begin{itemize}
\item[{\rm(i$_1$)}] The inner\/ $\alpha$-Riesz capacity of\/ $A$ is finite:
\[c_\alpha(A)<\infty.\]
\item[{\rm(ii$_1$)}] For some\/ {\rm(}equivalently, every\/{\rm)} $y\in\mathbb R^n$,
\begin{equation}\label{e-cap-f}\sum_{k\in\mathbb N}\,c_\alpha(A_k)<\infty,\end{equation}
where\/ $q\in(1,\infty)$ and\/ $A_k:=A\cap\{x\in\mathbb R^n:\ q^k\leqslant|x-y|<q^{k+1}\}$.
\item[{\rm(iii$_1$)}] For some\/ {\rm(}equivalently, every\/{\rm)} $y\in\mathbb R^n$,
\begin{equation}\label{bal-f-e}\varepsilon_y^{A_y^*}\in\mathcal E^+_\alpha.\end{equation}
\item[{\rm(iv$_1$)}] For some\/ {\rm(}equivalently, every\/{\rm)} $y\in\mathbb R^n$,
\begin{equation}\label{w2}\sum_{k\in\mathbb N}\,\frac{c_\alpha(A_y^*\cap R_k)}{q^{2k(n-\alpha)}}<\infty,\end{equation}
where\/ $q\in(0,1)$ and\/ $R_k:=\{x\in\mathbb R^n:\ q^{k+1}<|x-y|\leqslant q^k\}$.
\end{itemize}
\end{theorem}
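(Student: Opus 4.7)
The plan is to establish three blocks of equivalences --- (ii$_1$)$\Leftrightarrow$(iv$_1$), (i$_1$)$\Leftrightarrow$(iii$_1$), and (i$_1$)$\Leftrightarrow$(ii$_1$) --- which together close the cycle. The block (ii$_1$)$\Leftrightarrow$(iv$_1$) is purely geometric: setting $q':=1/q\in(1,\infty)$ for the parameter $q\in(0,1)$ of (iv$_1$), the $J_y$-image of the exterior shell $A_k':=A\cap\{(q')^k\leq|x-y|<(q')^{k+1}\}$ is precisely $A_y^*\cap R_k$, and the bilateral capacity estimate (\ref{est}) obtained in the proof of Theorem \ref{th-th} (applied with $q'$ in place of $q$) gives $c_\alpha(A_y^*\cap R_k)\asymp q^{2k(n-\alpha)}c_\alpha(A_k')$. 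Dividing by $q^{2k(n-\alpha)}$ matches the two series up to bounded multiplicative constants, so they converge or diverge together; the independence in $y$ and $q$ claimed in (ii$_1$) and (iv$_1$) then follows a posteriori from the equivalence with the $y$- and $q$-free condition (i$_1$).

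The block (i$_1$)$\Leftrightarrow$(iii$_1$) rests on the Kelvin isometry of energy (\ref{K}) combined with Theorem \ref{th-th} and the footnote after (\ref{v-convv-eq2}) stating that finite energy of $\gamma_A$ is equivalent to $c_\alpha(A)<\infty$. If $c_\alpha(A)<\infty$, the classical equilibrium measure $\gamma_A\in\mathcal E^+_\alpha$ of Section \ref{sec-pr1} exists with $\|\gamma_A\|_\alpha^2=c_\alpha(A)$; hence $A$ is $\alpha$-thin at infinity by Theorem \ref{th-th}(i), whence $\varepsilon_y^{A_y^*}=\mathcal K_y\gamma_A$ by (\ref{har-eq}), and (\ref{K}) gives $\|\varepsilon_y^{A_y^*}\|_\alpha^2=c_\alpha(A)<\infty$. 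Conversely, finite energy of $\varepsilon_y^{A_y^*}$ forces its $c_\alpha$-ab\-so\-lute continuity, so Theorem \ref{th-th}(v)$\Rightarrow$(i) furnishes $\gamma_A$; the involutivity of $\mathcal K_y$ yields $\gamma_A=\mathcal K_y\varepsilon_y^{A_y^*}$, and (\ref{K}) together with the cited footnote returns $c_\alpha(A)<\infty$.

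The genuinely nontrivial block is (i$_1$)$\Leftrightarrow$(ii$_1$). The direction (ii$_1$)$\Rightarrow$(i$_1$) is countable subadditivity of inner capacity applied to $A\subset\{y\}\cup\bigcup_k A_k$, using $c_\alpha(\{y\})=0$. For (i$_1$)$\Rightarrow$(ii$_1$), which is the main obstacle and the reason Lemma \ref{cap-f1} appears just before the theorem, I would split $A$ by parity of shell index into $A^{\mathrm{ev}}:=\bigcup_k A_{2k}$ and $A^{\mathrm{od}}:=\bigcup_k A_{2k+1}$ (the split is necessary because adjacent shells share a boundary sphere and have Euclidean distance zero); it suffices to bound $\sum c_\alpha(A_{2k})$. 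Put $T_k:=\bigcup_{j\geq k}A_{2j}$; the Euclidean distance between $A_{2k}$ and $T_{k+1}$ is at least $q^{2k+1}(q-1)>0$, so Lemma \ref{cap-f1} yields
\[
c_\alpha(A_{2k})+c_\alpha(T_{k+1})\leq c_\alpha(T_k)\Bigl[1+\frac{c_\alpha(A)}{q^{(2k+1)(n-\alpha)}(q-1)^{n-\alpha}}\Bigr].
\]
Summing in $k$, the telescoping part $\sum_k[c_\alpha(T_k)-c_\alpha(T_{k+1})]$ is bounded by $c_\alpha(T_1)\leq c_\alpha(A)$, and the residual error is dominated by the convergent geometric series $c_\alpha(A)^2(q-1)^{-(n-\alpha)}\sum_k q^{-(2k+1)(n-\alpha)}$. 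The key trick is to pair each single shell $A_{2k}$ with the \emph{entire} remaining tail $T_{k+1}$ (rather than with a single neighbor), which is what produces the telescoping structure and keeps the error controlled by a single power of $c_\alpha(A)^2$; the odd shells are handled identically.
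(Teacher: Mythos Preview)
Your proof is correct and follows the same overall architecture as the paper: the same geometric block (ii$_1$)$\Leftrightarrow$(iv$_1$) via the capacity estimate (\ref{est}), the same Kelvin-isometry block linking (i$_1$) and (iii$_1$) through (\ref{har-eq}) and (\ref{K}), the same subadditivity for (ii$_1$)$\Rightarrow$(i$_1$), and the same parity split plus Lemma~\ref{cap-f1} for (i$_1$)$\Rightarrow$(ii$_1$). One small slip: the shells $A_k$ only cover $\{|x-y|\geqslant q\}$, so $A$ is contained in $(A\cap\overline{B(y,q)})\cup\bigcup_k A_k$, not in $\{y\}\cup\bigcup_k A_k$; the missing bounded piece still has finite capacity, so the conclusion stands.

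The one genuine difference is how you iterate Lemma~\ref{cap-f1} in (i$_1$)$\Rightarrow$(ii$_1$). The paper (following Landkof's Lemma~5.5) applies the lemma repeatedly to the finite union $\bigcup_{k\leqslant N}A_{2k-1}$, obtaining the product bound $\sum_{k\leqslant N}c_\alpha(A_{2k-1})\leqslant c_\alpha(A)\prod_{k\geqslant1}M_k$ with $M_k=1+c_\alpha(A)(q^{2k+1}-q^{2k})^{-(n-\alpha)}$, and then argues that the infinite product converges because $\sum(M_k-1)<\infty$. You instead pair each $A_{2k}$ with the full tail $T_{k+1}=\bigcup_{j>k}A_{2j}$, which makes the main term telescope to $c_\alpha(T_1)\leqslant c_\alpha(A)$ and leaves an additive error bounded by $c_\alpha(A)^2\sum_k d_k^{-(n-\alpha)}$. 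Both arguments are valid; yours avoids infinite products and gives a cleaner additive bound, while the paper's product form is the classical one and yields a slightly sharper constant.
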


\begin{proof}That (ii$_1$) implies (i$_1$) follows directly from a strengthened version of countable subadditivity for inner capacity (see \cite[p.~253]{Ca2} or \cite[p.~158, Remark]{F1}):\smallskip

\emph{For\/ $Q\subset\mathbb R^n$ arbitrary and\/ $U_k\subset\mathbb R^n$, $k\in\mathbb N$, universally measurable,}
\begin{equation}\label{sub}c_\alpha\Bigl(\bigcup_{k\in\mathbb N}\,Q\cap U_k\Bigr)\leqslant\sum_{k\in\mathbb N}\,c_\alpha(Q\cap U_k)\smallskip.\end{equation}

Assuming now $c_\alpha(A)<\infty$, we shall establish (\ref{e-cap-f}) by showing that
\begin{equation*}\label{ser}S_1:=\sum_{k\in\mathbb N}\,c_\alpha(A_{2k-1})<\infty,\quad
S_2:=\sum_{k\in\mathbb N}\,c_\alpha(A_{2k})<\infty.\end{equation*}
Since both these series can be handled in the same manner, we shall verify $S_1<\infty$. Similarly as in \cite[Proof of Lemma~5.5]{L}, repeated application of Lemma~\ref{cap-f1} gives
\begin{equation}\label{long}\sum_{k=1}^N\,c_\alpha(A_{2k-1})\leqslant c_\alpha\Bigl(\bigcup_{k=1}^N\,A_{2k-1}\Bigr)\prod_{k=1}^{N-1}M_k\leqslant c_\alpha(A)\prod_{k=1}^{\infty}M_k\text{ \ for all\ }N\in\mathbb N,\end{equation}
where
\[M_k:=1+\frac{c_\alpha(A)}{(q^{2k+1}-q^{2k})^{n-\alpha}}.\]
As
\[\sum_{k=1}^{\infty}\,\frac1{(q^{2k+1}-q^{2k})^{n-\alpha}}\leqslant\sum_{k=1}^{\infty}\,\frac1{q^{2k(n-\alpha)}}<\infty,\]
the right-hand side in (\ref{long}) is ${}<\infty$. Thus (i$_1$) and (ii$_1$) are indeed equivalent.

Assuming again that (ii$_1$) holds, we get by applying (\ref{est})
\[\sum_{k\in\mathbb N}\,q^{2k(n-\alpha)}c_\alpha\bigl(J_y(A_k)\bigr)<\infty,\]
which is, in fact, (\ref{w2}) with $1/q\in(0,1)$. This proves (ii$_1$)$\Rightarrow$(iv$_1$) by noting that the series in (ii$_1$) converges (or does not converge) simultaneously for all $y\in\mathbb R^n$. The reverse implication can likewise be established with the aid of (\ref{est}).

Let now for a given $y$,  (iv$_1$) hold; then so does (i$_1$). From (\ref{w2}) we get $y\in(A^*_y)^{rc}$, by the Wiener type criterion. Hence $\varepsilon_y^{A_y^*}=\mathcal K_y\gamma_A$ by (\ref{har-eq}), and applying (\ref{K}) yields
\begin{equation}\label{pr-d}\kappa_\alpha(\varepsilon_y^{A_y^*},\varepsilon_y^{A_y^*})=\kappa_\alpha(\gamma_A,\gamma_A)=c_\alpha(A)<\infty,\end{equation}
the inequality being obtained from (i$_1$). Thus (iv$_1$) indeed implies (iii$_1$).

Finally, if (iii$_1$) is true, then certainly $y\in(A^*_y)^{rc}$. As in the preceding paragraph, this again results in (\ref{pr-d}), the inequality being valid by (\ref{bal-f-e}). Thus (iii$_1$)$\Rightarrow$(i$_1$).\end{proof}

This motivates us to introduce the following concept of $\alpha$-ul\-tra\-thin\-ness at infinity.

\begin{definition}\label{def-th2} $A\subset\mathbb R^n$ is said to be (\emph{inner\/}) \emph{$\alpha$-ul\-tra\-thin
at infinity\/} if assertions (i$_1$)--(iv$_1$) in Theorem~\ref{th-f-en} hold true.\end{definition}

Thus $A$ is $\alpha$-ultrathin at infinity if the measure $\varepsilon_y^{A_y^*}$ has finite energy for some (equivalently, every) inversion $J_y$. This holds, in turn, if and only if $c_\alpha(A)<\infty$.

\begin{remark}\label{ultra}If $A$ is $\alpha$-ult\-ra\-thin at infinity, then it is obviously $\alpha$-thin at infinity; but not the other way around, which is clear by comparing Theorem~\ref{th-f-en}(ii$_1$) with Theorem~\ref{th-th}(iii). See also Example~\ref{ex} for illustration and Remark~\ref{rem-comp} for some relevant references.
\end{remark}

A criterion for $\varepsilon_y^A$ to have finite energy now reads as follows.

\begin{corollary}\label{cor-en-f}For\/ $A$ arbitrary, $\varepsilon_y^A$ has finite energy if and only if\/ $A_y^*$ is\/ $\alpha$-ul\-tra\-thin at infinity, or equivalently\/ $c_\alpha(A_y^*)<\infty$. In the affirmative case,
\[\kappa_\alpha(\varepsilon_y^A,\varepsilon_y^A)=\kappa_\alpha(\gamma_{A_y^*},\gamma_{A_y^*})=c_\alpha(A_y^*)<\infty.\]\end{corollary}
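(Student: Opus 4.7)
The plan is to reduce the claim to the Kelvin-transform identity $\varepsilon_y^A = \mathcal K_y \gamma_{A_y^*}$ furnished by Corollary~\ref{conc2}, and then to transport energies via the invariance formula (\ref{K}). I would \emph{not} re-run the proof of Theorem~\ref{th-f-en} for $A_y^*$; instead I would exploit the already-established link between balayage of a Dirac mass and the inner equilibrium measure of the inverse set, together with the fact (footnote following (\ref{v-convv-eq2})) that finiteness of $\kappa_\alpha(\gamma_B,\gamma_B)$ is equivalent to finiteness of $c_\alpha(B)$.

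For the forward implication, suppose $\varepsilon_y^A \in \mathcal E_\alpha^+$. Since finite energy entails $c_\alpha$-absolute continuity, Corollary~\ref{conc2} applies and yields both $\varepsilon_y^A = \mathcal K_y \gamma_{A_y^*}$ and the $\alpha$-thinness of $A_y^*$ at infinity. Applying (\ref{K}) then gives
\[
\kappa_\alpha(\varepsilon_y^A, \varepsilon_y^A) = \kappa_\alpha(\gamma_{A_y^*}, \gamma_{A_y^*}),
\]
whose finiteness forces $c_\alpha(A_y^*) < \infty$ by the cited footnote, i.e., $A_y^*$ is $\alpha$-ultrathin at infinity in the sense of Definition~\ref{def-th2}. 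Conversely, if $c_\alpha(A_y^*) < \infty$, then $A_y^*$ is a fortiori $\alpha$-thin at infinity (Remark~\ref{ultra}), so Corollary~\ref{conc2} still yields $\varepsilon_y^A = \mathcal K_y \gamma_{A_y^*}$; in this finite-capacity regime $\gamma_{A_y^*}$ coincides with the classical equilibrium measure of Section~\ref{sec-pr1}, hence by (\ref{eq11}) one has $\|\gamma_{A_y^*}\|_\alpha^2 = c_\alpha(A_y^*)$, and (\ref{K}) transports this to the displayed chain of equalities.

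No serious obstacle is anticipated: the argument is essentially an unpacking of the Kelvin transformation combined with previously established equivalences. The single delicate point is the compatibility, for a set $B$ of finite inner capacity, of the extended-sense equilibrium measure of Section~\ref{sec-ext} with the classical one of Section~\ref{sec-pr1}; but this is precisely the content of the footnote following (\ref{v-convv-eq2}) and so amounts to a routine citation rather than an independent step.
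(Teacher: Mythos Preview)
Your argument is correct and rests on the same underlying mechanism as the paper's proof---namely the Kelvin identity $\varepsilon_y^A=\mathcal K_y\gamma_{A_y^*}$ together with the energy invariance (\ref{K}) and the equivalence between finiteness of $\kappa_\alpha(\gamma_B,\gamma_B)$ and of $c_\alpha(B)$. The paper simply packages this as ``apply Theorem~\ref{th-f-en} with $A$ and $A_y^*$ interchanged'' (since $(A_y^*)_y^*=A$, the equivalence (i$_1$)$\iff$(iii$_1$) there reads exactly $c_\alpha(A_y^*)<\infty\iff\varepsilon_y^A\in\mathcal E_\alpha^+$, and the displayed equality is (\ref{pr-d})), whereas you route through Corollary~\ref{conc2} and invoke (\ref{K}) explicitly; the two are the same argument at different levels of abstraction.
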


\begin{proof}This follows from Theorem~\ref{th-f-en} with $A$ and $A^*_y$ interchanged.\end{proof}

\begin{figure}[htbp]
\begin{center}
\vspace{-.4in}
\hspace{1in}\includegraphics[width=4in]{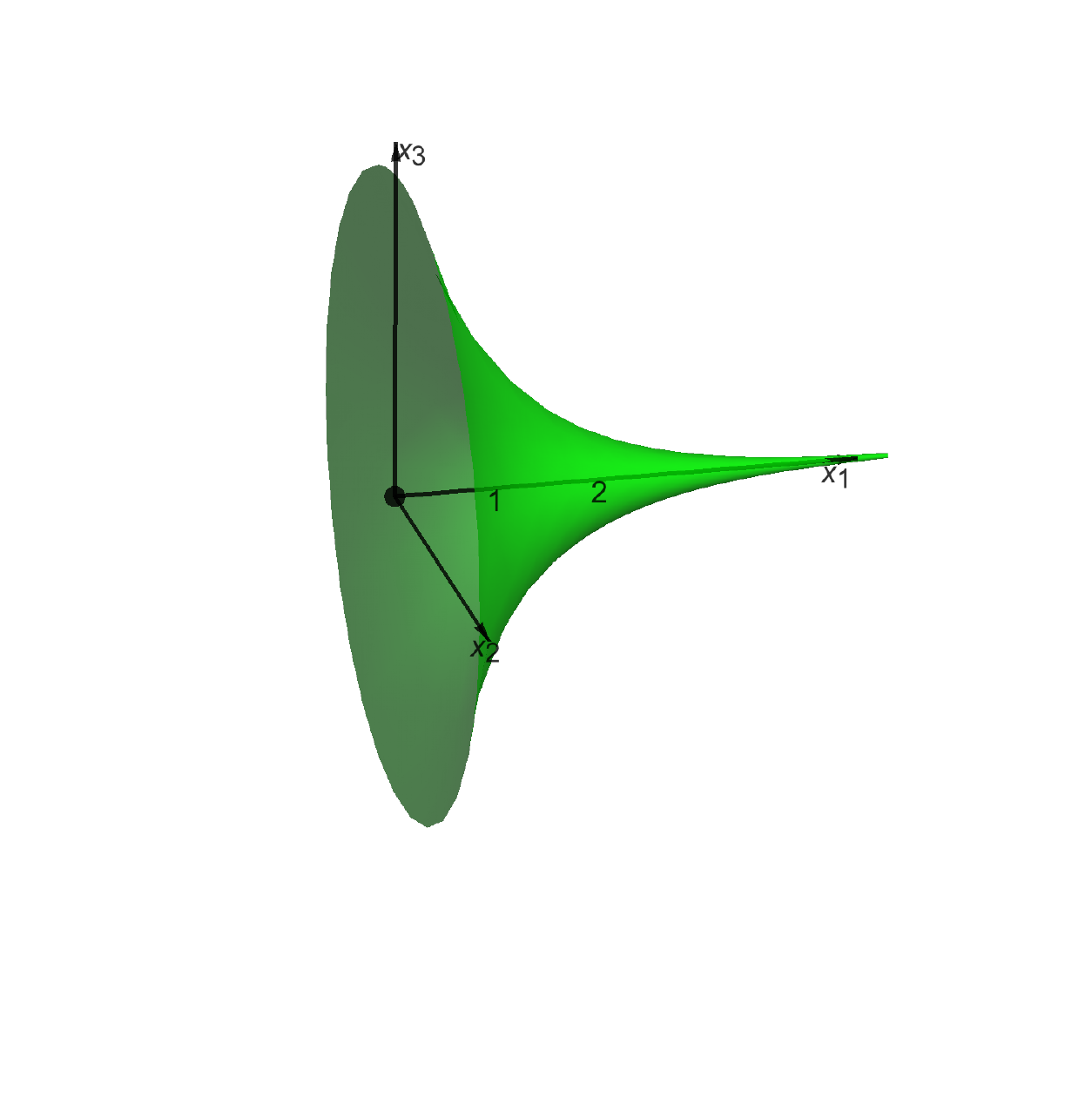}
\vspace{-.8in}
\caption{The set $F_2$ in Example~\ref{ex} with $\varrho_2(x_1)=\exp(-x_1)$.\vspace{-.1in}}
\label{Fig1}
\end{center}
\end{figure}

\begin{example}\label{ex} Let $n=3$ and $\alpha=2$. Consider the rotation bodies
\begin{equation*}\label{descr}F_i:=\bigl\{x\in\mathbb R^3: \ 0\leqslant x_1<\infty, \
x_2^2+x_3^2\leqslant\varrho_i^2(x_1)\bigr\}, \ i=1,2,3,\end{equation*}
where
\begin{align*}
\varrho_1(x_1)&:=x_1^{-s}\text{ \ with\ }s\in[0,\infty),\\
\varrho_2(x_1)&:=\exp(-x_1^s)\text{ \ with\ }s\in(0,1],\\
\varrho_3(x_1)&:=\exp(-x_1^s)\text{ \ with\ }s\in(1,\infty).
\end{align*}
Then $F_1$ is not $2$-thin at infinity, $F_2$ is $2$-thin (but not $2$-ul\-tra\-thin) at infinity (see Figure~\ref{Fig1}),
and $F_3$ is $2$-ul\-tra\-thin at infinity. This follows by combining criteria of $\alpha$-thin\-ness and $\alpha$-ul\-tra\-thin\-ness at infinity, established in Theorems~\ref{th-th} and \ref{th-f-en}, with estimates in \cite[Chapter~V, Section~1, Example]{L}. Hence, by Theorem~\ref{har-tot},
\[\varepsilon_y^{F_1}(\mathbb R^n)=1\text{ \ for all\ }y\in\mathbb R^n,\]
whereas for $i=2,3$, we have the two relations
\begin{align*}\varepsilon_y^{F_i}(\mathbb R^n)&=
\kappa_2\gamma_{F_i}(y)&\hspace{-3.5cm}\text{for all\ }y\in\mathbb R^n,\\
\varepsilon_y^{F_i}(\mathbb R^n)&<1&\hspace{-3.5cm}\text{for all\ }y\in F_i^c,\end{align*}
$\kappa_2\gamma_{F_i}$ being harmonic on the domain $F_i^c$.
Denoting by $F_i^*$ the $J_y$-im\-age of $F_i\setminus\{y\}$ for $y\in\mathbb R^n$ arbitrarily given, we also conclude from Theorems~\ref{th-th} and \ref{th-f-en} that
\begin{equation*}\varepsilon_y^{F^*_i}=\left\{
\begin{array}{ll}\varepsilon_y&\text{for $i=1$},\\
\mathcal K_y\gamma_{F_i}&\text{for $i=2,3$},\\ \end{array} \right.
\end{equation*}
where the energy of $\varepsilon_y^{F^*_2}$ is infinite (though $\varepsilon_y^{F^*_2}$ is $c_2$-ab\-sol\-ut\-ely continuous), whereas
\[\kappa_2(\varepsilon_y^{F^*_3},\varepsilon_y^{F^*_3})=\kappa_2(\gamma_{F_3},\gamma_{F_3})=c_2(F_3)<\infty.\]
\end{example}

\begin{remark}\label{rem-comp}Throughout this remark, $\alpha=2$ and $A$ is Borel measurable. Then $A$ is inner $2$-thin at infinity by Definition~\ref{def-th} if and only if $A$ is outer ($2$-)thin at infinity by Doob \cite[pp.~175--176]{Doob}, the latter concept being defined via the outer ($2$-)ir\-reg\-ul\-ar\-ity of $y\in\mathbb R^n$ for the inverse $A_y^*$. Such equivalence is obvious in view of the fact that for Borel sets, the concepts of inner and outer irregularity coincide.

Applying \cite[Chapter~1.XI, Theorem~5]{Doob} we thus see that $A$ Borel is inner $2$-thin at infinity if and only if there is a positive superharmonic function $v$ on $\mathbb R^n$ with
\[\lim_{x\in A, \ |x|\to\infty}\,v(x)=\infty.\]
Some other properties of such $A$ can be derived e.g.\ from \cite{Deny,Camera}.

On the other hand, for $A$ Borel, our concept of inner $2$-ul\-tra\-thin\-ness at infinity is equivalent to the concept of outer ($2$-)thin\-ness at infinity by Brelot \cite[p.~313]{Brelot} (which is more restrictive than that by Doob). In fact, as noted in \cite[p.~31]{Brelot2} and \cite[Chapter~IX, Section~6]{Br}, a set is thin at infinity in the sense of \cite[p.~313]{Brelot} if and only if its outer capacity is finite. This yields the claimed equivalence, cf.\ Theorem~\ref{th-f-en}(i$_1$).\end{remark}

\section{On the vague continuity of the map $y\mapsto\varepsilon_y^A$}

\begin{theorem}\label{pr1} For\/ $A\subset\mathbb R^n$ arbitrary, the map\/ $y\mapsto\varepsilon_y^A$ is vaguely continuous outside the set\/ $A^i$ of the inner\/ $\alpha$-ir\-reg\-ul\-ar points. That is, for every\/ $f\in C_0$,
\begin{equation}\label{e-cont}\lim_{y\to z}\,\varepsilon_{y}^A(f)=\varepsilon_z^A(f)\text{ \ for all\ }z\not\in A^i,\end{equation}
hence
\begin{equation}\label{e-cont1}\lim_{y\to z}\,\varepsilon_{y}^A(f)=\varepsilon_z(f)\text{ \ for all\ }z\in A^r.\end{equation}
\end{theorem}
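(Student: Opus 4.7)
The plan is to combine the duality identity
\[
\int \kappa_\alpha\theta \, d\varepsilon_y^A \;=\; \kappa_\alpha\theta^A(y), \qquad \theta\in\mathfrak{M}^+,
\]
obtained from (\ref{alternative1}) with $\mu:=\varepsilon_y$, with a compactness-and-identification scheme. Since $\varepsilon_y^A(\mathbb{R}^n)\leqslant 1$, the family $\{\varepsilon_y^A\}$ is vaguely relatively compact on any bounded neighborhood of $z$, so it suffices to show that any vague cluster point $\nu$ of $(\varepsilon_{y_k}^A)$ along a net $y_k\to z\notin A^i$ must coincide with $\varepsilon_z^A$.

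To identify $\nu$, I would first verify that for every $\theta\in\mathcal{E}_\alpha^+$ of compact support whose potential $\kappa_\alpha\theta$ is bounded and continuous on $\mathbb{R}^n$, the function $y\mapsto\kappa_\alpha\theta^A(y)$ is continuous at $z$. Two cases arise. If $z\in A^r$, then $\kappa_\alpha\theta^A(z)=\kappa_\alpha\theta(z)$ by (\ref{reg-pot}); since $\kappa_\alpha\theta^A$ is lower semicontinuous, $\kappa_\alpha\theta^A\leqslant\kappa_\alpha\theta$ on $\mathbb{R}^n$ by (\ref{ineq2}), and $\kappa_\alpha\theta$ is continuous at $z$, squeezing yields $\lim_{y\to z}\kappa_\alpha\theta^A(y)=\kappa_\alpha\theta^A(z)$. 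If instead $z\notin\overline{A}$, then by the Portmanteau-type consequence of (\ref{v-conv}) the balayage $\theta^A$ is concentrated on $\overline{A}$, so its potential is real-analytic on a neighborhood of $z$.

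A parallel two-case argument with $\gamma_A$ in place of $\theta^A$, using (\ref{equi1}) together with $\kappa_\alpha\gamma_A\leqslant 1$ on $\mathbb{R}^n$ in the first case and the concentration of $\gamma_A$ on $\overline{A}$ (from (\ref{v-convv-eq2})) in the second, shows via Theorem~\ref{har-tot} that $\varepsilon_{y_k}^A(\mathbb{R}^n)\to\varepsilon_z^A(\mathbb{R}^n)$. Combined with vague convergence, this upgrades to weak convergence (no mass escapes to infinity), whence
\[
\int\kappa_\alpha\theta \, d\nu \;=\; \lim_k\int\kappa_\alpha\theta \, d\varepsilon_{y_k}^A \;=\; \lim_k\kappa_\alpha\theta^A(y_k) \;=\; \kappa_\alpha\theta^A(z) \;=\; \int\kappa_\alpha\theta \, d\varepsilon_z^A.
\]
Interchanging the order of integration turns this into $\int\kappa_\alpha\nu \, d\theta=\int\kappa_\alpha\varepsilon_z^A \, d\theta$ for every admissible $\theta$; since such $\theta$'s form a rich enough class, the unicity principle for Riesz potentials gives $\nu=\varepsilon_z^A$, proving (\ref{e-cont}). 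Statement (\ref{e-cont1}) is then immediate since $\varepsilon_z^A=\varepsilon_z$ for $z\in A^r$.

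The main obstacle is the tightness step: the supports of the $\varepsilon_{y_k}^A$ lie in the possibly unbounded set $\overline{A}$, so vague compactness alone does not preclude mass from drifting off to infinity. Securing the convergence of total masses through Theorem~\ref{har-tot}, together with the lower semicontinuity and the sharp boundary value of $\kappa_\alpha\gamma_A$ at $z$, is precisely what rules this out and makes the scheme go through uniformly in both sub-cases $z\in A^r$ and $z\in\overline{A}^{\,c}$.
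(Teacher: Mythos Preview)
Your argument is correct in outline, but it takes a different route from the paper's, and one step is superfluous.

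The paper proceeds more directly: using Lemma~\ref{1.1}, every $f\in C_0^\infty$ is written as $f=\kappa_\alpha\psi$ for some signed $\psi\in\mathcal E_\alpha$, so that by (\ref{alternative}) one has the \emph{exact identity} $\varepsilon_y^A(f)=\kappa_\alpha\psi^A(y)$ for all $y$. Continuity of $y\mapsto\varepsilon_y^A(f)$ at $z\notin A^i$ then reduces to continuity of the potential $\kappa_\alpha\psi^A$ at $z$, which is handled by the same squeezing argument you give (with an additional approximation step when $\kappa_\alpha\psi^\pm$ is not continuous). No compactness, no cluster-point identification, and no tightness are needed.

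Your scheme recovers the same conclusion but through vague compactness of $\{\varepsilon_y^A\}$ and identification of cluster points. Two remarks. First, the tightness detour via Theorem~\ref{har-tot} is unnecessary: since your test measures $\theta$ are compactly supported with continuous potential, $\kappa_\alpha\theta$ actually vanishes at infinity, and vague convergence together with the uniform bound $\varepsilon_y^A(\mathbb R^n)\leqslant1$ already gives $\int\kappa_\alpha\theta\,d\varepsilon_{y_k}^A\to\int\kappa_\alpha\theta\,d\nu$. Second, your invocation of $\gamma_A$ tacitly assumes $A$ is $\alpha$-thin at infinity; when it is not, $\gamma_A$ does not exist, though the mass is then identically $1$ by Theorem~\ref{har-tot} and the step is trivial. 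With these adjustments your proof goes through; the paper's approach simply bypasses the compactness machinery by encoding $f$ itself as a potential.
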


Thus $z\in A^r$ serve as accumulation points of the inner $\alpha$-har\-mo\-nic measure $\varepsilon_y^A$:
\[\varepsilon_y^A\to\varepsilon_z\text{ \ vaguely as\ }y\to z\in A^r.\]

\subsection{Auxiliary results} To prove Theorem~\ref{pr1}, we first note that it is enough to verify the vague convergence only on a certain countable set $S$ of positive test functions from $C_0^{\infty}$, while every $f\in C_0^{\infty}$ can be thought of as the potential of a (signed) measure of finite energy. More precisely, the following auxiliary assertions are true.

\begin{lemma}\label{l-count}There is a countable set\/ $S$ of positive functions from\/ $C_0^{\infty}$ having the following property: for every\/ $f\in C^+_0$,\footnote{As usual, $C_0^+$ stands for the class of all positive functions from $C_0$.} there exist a sequence\/ $(f_k)\subset S$ and a function\/ $\varphi\in S$ such that, for every number\/ $\varepsilon>0$,
\[|f_k-f|\leqslant\varepsilon\varphi\text{ \ for all\/ $k$ large enough}.\]\end{lemma}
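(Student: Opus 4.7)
The plan is to build $S$ as the union of countably many smooth cutoff functions $\{\varphi_m\}$ and countable sup-norm-dense families $D_m$ of positive smooth functions supported in progressively larger balls. The cutoff $\varphi_m$ will serve as the single ``witness'' $\varphi\in S$ for any $f\in C_0^+$ whose support is contained in an appropriate ball, while elements of $D_m$ will supply the approximating sequence $(f_k)$. The essential point is to arrange the supports in nested fashion so that the pointwise bound $|f_k-f|\leqslant\varepsilon\varphi$ holds globally on $\mathbb R^n$ and not merely on $\operatorname{supp}f$.

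Concretely, fix the exhaustion $B_m:=\{x\in\mathbb R^n:|x|<m\}$ for $m\in\mathbb N$. For each $m$, choose $\varphi_m\in C_0^\infty(\mathbb R^n)$ with $\varphi_m\geqslant 0$, $\varphi_m\equiv 1$ on $\overline{B_{m+1}}$, and $\operatorname{supp}\varphi_m\subset B_{m+2}$. The subfamily
\[
V_m:=\bigl\{g\in C_0^\infty(\mathbb R^n):g\geqslant 0,\ \operatorname{supp}g\subset B_{m+1}\bigr\}
\]
of $C(\overline{B_{m+2}})$ is separable in the sup-norm, so we may fix a countable sup-norm dense subset $D_m\subset V_m$. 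Set
\[
S:=\{\varphi_m:m\in\mathbb N\}\cup\bigcup_{m\in\mathbb N}D_m,
\]
which is a countable collection of positive functions from $C_0^\infty$.

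For the verification, given $f\in C_0^+$, choose $m$ with $\operatorname{supp}f\subset B_m$, and put $\varphi:=\varphi_m$. A standard mollification $f\ast\rho_\delta$, with $\delta$ smaller than the Euclidean distance from $\operatorname{supp}f$ to $\partial B_{m+1}$, produces a uniform approximation of $f$ by members of $V_m$; combined with density of $D_m$ in $V_m$, this yields $f_k\in D_m$ with $\|f_k-f\|_\infty<1/k$. For any $\varepsilon>0$, as soon as $k\geqslant 1/\varepsilon$, we have on $\overline{B_{m+1}}$ that $\varphi_m\equiv 1$, hence $|f_k-f|\leqslant 1/k\leqslant\varepsilon=\varepsilon\varphi_m$; and outside $B_{m+1}$ both $f$ (supported in $B_m$) and $f_k$ (supported in $B_{m+1}$) vanish identically, whence $|f_k-f|=0\leqslant\varepsilon\varphi_m$. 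Thus the required estimate holds pointwise on $\mathbb R^n$ for all sufficiently large $k$.

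The only subtlety is the bookkeeping of supports: the domination $|f_k-f|\leqslant\varepsilon\varphi_m$ has to be established at every point of $\mathbb R^n$, which forces the nesting $\operatorname{supp}f\subset B_m\subset\overline{B_{m+1}}=\{\varphi_m\equiv 1\}$ together with $\operatorname{supp}f_k\subset B_{m+1}$ and $\operatorname{supp}\varphi_m\subset B_{m+2}$. Beyond this, the argument relies only on the separability of $C(\overline{B_{m+2}})$ in sup-norm and on the classical uniform approximation of a compactly supported continuous function by positive $C^\infty$ functions via mollification, so no genuine analytic obstacle arises.
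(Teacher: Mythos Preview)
Your proof is correct. The approach differs from the paper's: the paper invokes a lemma from Bourbaki (Integration, Chapter~V, \S3, n$^\circ$1) which already provides a countable set $S_0\subset C_0^+$ with the stated property, and then upgrades $S_0$ to a subset of $C_0^\infty$ by mollifying each element of $S_0$. You instead build $S$ from scratch via an explicit exhaustion of $\mathbb R^n$ by balls, fixed smooth cutoffs $\varphi_m$, and countable sup-norm-dense families $D_m$ of positive smooth functions with supports in $B_{m+1}$. Your route is self-contained and makes the role of the witness $\varphi$ completely transparent (it is always one of the cutoffs $\varphi_m$, and the domination is just the sup-norm estimate multiplied by a function that is identically~$1$ on the relevant region); the paper's route is shorter but leans on an external reference and leaves to the reader the check that the mollified family still satisfies the two-sided estimate with a $\varphi$ drawn from the new set.
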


\begin{proof}This lemma, but with a countable set $S_0\subset C_0^+$ in place of $S\subset C_0^\infty$, is actually given in Bourbaki \cite[Chapter~V, Section~3, n$^\circ$~1, Lemma]{B2}, the space $\mathbb R^n$ being se\-cond-count\-able. What is claimed now, follows by approximating every $g\in S_0$ in the topology on $C_0$ by a sequence $(f_k)$ of positive $f_k\in C_0^{\infty}$ (obtained by regularization \cite[p.~22]{S}) and replacing the set $S_0$ by the (countable) union $S$ of all those $(f_k)$.\end{proof}

\begin{proposition}\label{cor-count}A net\/ $(\mu_j)\subset\mathfrak M^+$ converges to\/ $\mu$ vaguely if\/ {\rm(}and only if\/{\rm)}
\begin{equation}\label{g}\mu_j(g)\to\mu(g)\text{ \ for all\ }g\in S,\end{equation}
where\/ $S$ is the countable set of positive functions from\/ $C_0^{\infty}$ given by Lemma\/~{\rm\ref{l-count}}. Therefore, any two\/ $\mu,\nu\in\mathfrak M^+$ are equal if and only if\/ $\mu(g)=\nu(g)$ for all\/ $g\in S$.\end{proposition}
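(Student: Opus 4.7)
\begin{pf}[Proof proposal]
The ``only if'' direction is trivial: vague convergence means $\mu_j(f)\to\mu(f)$ for every $f\in C_0$, and in particular for every $f\in S\subset C_0^{\infty}\subset C_0$. The substantive content is the converse, so I would focus there.

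First I would reduce to test functions $f\in C_0^+$. Indeed, any $f\in C_0$ splits as $f=f^+-f^-$ with $f^\pm\in C_0^+$, so once vague convergence is established on $C_0^+$ it extends to $C_0$ by linearity. Next, given $f\in C_0^+$, I would invoke Lemma~\ref{l-count} to produce a sequence $(f_k)\subset S$ and a function $\varphi\in S$ such that, for each fixed $\varepsilon>0$,
\[
|f_k-f|\leqslant\varepsilon\varphi\text{ \ for all $k$ large enough.}
\]
Applied to the positive measures $\mu_j$ and $\mu$, this yields
\[
|\mu_j(f)-\mu_j(f_k)|\leqslant\varepsilon\mu_j(\varphi)\text{ \ and \ }|\mu(f)-\mu(f_k)|\leqslant\varepsilon\mu(\varphi).
\]

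The key technical point is a uniform bound on $\mu_j(\varphi)$. Since $\varphi\in S$, hypothesis (\ref{g}) gives $\mu_j(\varphi)\to\mu(\varphi)<\infty$, so there is an index $j_0$ (depending on the net, not on $\varepsilon$) past which $\mu_j(\varphi)\leqslant\mu(\varphi)+1$. Combining the three inequalities by the triangle inequality, for $k$ large enough and $j\geqslant j_0$,
\[
|\mu_j(f)-\mu(f)|\leqslant\varepsilon\bigl(\mu(\varphi)+1\bigr)+|\mu_j(f_k)-\mu(f_k)|+\varepsilon\mu(\varphi).
\]
Fixing such a $k$ and using (\ref{g}) again for $f_k\in S$, the middle term tends to $0$ along the net, so
\[
\limsup_{j}\,|\mu_j(f)-\mu(f)|\leqslant\varepsilon\bigl(2\mu(\varphi)+1\bigr).
\]
Letting $\varepsilon\downarrow 0$ gives $\mu_j(f)\to\mu(f)$, as required.

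The final assertion, that $\mu(g)=\nu(g)$ for all $g\in S$ forces $\mu=\nu$, is immediate from the criterion applied to the constant net $\mu_j:=\nu$: the hypothesis gives vague convergence $\nu\to\mu$, which combined with the Hausdorff property of the vague topology (or simply the fact that $\nu(f)=\mu(f)$ for every $f\in C_0$) yields $\mu=\nu$. The one subtlety I would be most careful about is the uniform control of $\mu_j(\varphi)$; everything else is routine manipulation, but the argument genuinely uses that we can choose a \emph{single} majorant $\varphi\in S$ (independent of $\varepsilon$) in Lemma~\ref{l-count}.
\end{pf}
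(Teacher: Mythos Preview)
Your proposal is correct and follows essentially the same route as the paper: approximate $f\in C_0^+$ by some $f_k\in S$ with error dominated by $\varepsilon\varphi$, use the hypothesis on $\varphi\in S$ to bound $\mu_j(\varphi)$ uniformly along a tail of the net, and finish with a three-term triangle inequality. The only cosmetic differences are that the paper phrases the uniform bound as $\mu_j(\varphi)\leqslant 2\mu(\varphi)$ (after a shift to ensure $\mu(\varphi)>0$) rather than your $\mu(\varphi)+1$, and organizes the estimate as a direct $\delta/3$ argument rather than a $\limsup$ followed by $\varepsilon\downarrow 0$.
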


\begin{proof}Fix $f\in C_0^+$ and $\delta\in(0,\infty)$. We need to show that under assumption (\ref{g}),
\[|\mu_j(f)-\mu(f)|<\delta\text{ \ for all $j$ large enough}.\]
For the given $f$, choose a sequence $(f_k)\subset S$ and a function $\varphi\in S$ as stated in Lemma~\ref{l-count}. Replacing if necessary $\mu$ and $\mu_j$ by shifted measures, we may assume that $\mu(\varphi)>0$. For every $\varepsilon<\delta/6\mu(\varphi)$, then $|f-f_k|\leqslant\varepsilon\varphi$ for some $k\in\mathbb N$, whence
\begin{equation*}\label{g1}|\mu(f)-\mu(f_k)|\leqslant\int|f-f_k|\,d\mu\leqslant\varepsilon\mu(\varphi)<\delta/3.\end{equation*}
Furthermore, for this $k$ and all $j$ large enough,
\begin{align}\label{g3}&|\mu_j(f)-\mu_j(f_k)|\leqslant\int|f-f_k|\,d\mu_j\leqslant\varepsilon\mu_j(\varphi)\leqslant2\varepsilon\mu(\varphi)<\delta/3,\\
\label{g4}&|\mu_j(f_k)-\mu(f_k)|<\delta/3,
\end{align}
where (\ref{g4}) and the third inequality in (\ref{g3}) hold by (\ref{g}) applied to $f_k$ and $\varphi$, respectively. Combining the last three displays gives the estimate required.
\end{proof}

\begin{lemma}\label{1.1}For any\/ $f\in C_0^{\infty}$, there is an absolutely continuous\/ {\rm(}with respect to Lebesgue measure\/{\rm)} signed Radon measure\/ $d\psi=\psi\,dx$ of finite energy such that
\begin{align}\label{e-1}&f=\kappa_\alpha\psi=\kappa_\alpha\ast\psi,\\
\label{psiinf}&\psi(x)=O(|x|^{-n-\alpha})\text{ \ as\ }|x|\to\infty,
\end{align}
where\/ $\ast$ refers to convolution.\footnote{Being locally Lebesgue integrable, $\kappa_\alpha$ is viewed here as density of an absolutely continuous measure.}
\end{lemma}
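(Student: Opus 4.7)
The natural candidate is obtained by inverting convolution with $\kappa_\alpha$: I would set
\[\psi := A_{n,\alpha}^{-1}(-\Delta)^{\alpha/2} f,\]
where $A_{n,\alpha}$ is the classical Riesz composition constant chosen so that $\kappa_\alpha \ast \bigl((-\Delta)^{\alpha/2} g\bigr) = A_{n,\alpha}\, g$ for every Schwartz function $g$. When $\alpha = 2$ this is just $\psi = c_n^{-1}(-\Delta f)$, which lies in $C_0^\infty$. The inversion identity (\ref{e-1}) is then immediate from the Fourier transform formulas $\widehat{\kappa_\alpha}(\xi) = c'_{n,\alpha}|\xi|^{-\alpha}$ and $\widehat{(-\Delta)^{\alpha/2} f}(\xi) = |\xi|^\alpha\,\widehat f(\xi)$, which give $\widehat{\kappa_\alpha \ast \psi}(\xi) = \widehat f(\xi)$ with both sides absolutely integrable.

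For the asymptotic decay (\ref{psiinf}) in the range $\alpha \in (0,2)$, I would use the hypersingular integral representation
\[\psi(x) = c_{n,\alpha}\,\mathrm{p.v.}\!\int \frac{f(x) - f(y)}{|x-y|^{n+\alpha}}\,dy.\]
For $|x|$ so large that $\mathrm{dist}(x,\mathrm{supp}\,f) \geq |x|/2$ the first term vanishes and $|x-y|^{-n-\alpha} \leq C|x|^{-n-\alpha}$ uniformly on $\mathrm{supp}\,f$, whence $|\psi(x)| \leq C|x|^{-n-\alpha}\,\|f\|_{L^1}$; for $\alpha = 2$ there is nothing to prove, since $\psi$ is compactly supported.

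For the finite energy, combine the bound $|\psi(x)| \leq C(1+|x|)^{-n-\alpha}$ with smoothness of $\psi$ to conclude $|\psi| \in L^1(\mathbb R^n)\cap L^\infty(\mathbb R^n)$. Splitting the convolution integral over $\{|x-y|<1\}$ (where local integrability of $\kappa_\alpha$ and boundedness of $|\psi|$ suffice) and $\{|x-y|\geq 1\}$ (where $\kappa_\alpha(x,y) \leq 1$ and $|\psi|\in L^1$), one sees that $\kappa_\alpha \ast |\psi| \in L^\infty(\mathbb R^n)$; therefore
\[\kappa_\alpha(|\psi|,|\psi|) \leq \bigl\|\kappa_\alpha\ast|\psi|\bigr\|_\infty \cdot \|\psi\|_{L^1} < \infty,\]
so $\psi$ has finite energy as a signed Radon measure.

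The only non-trivial step is the pointwise decay (\ref{psiinf}), and it is really the heart of the lemma, but it is a classical consequence of the singular-integral form of $(-\Delta)^{\alpha/2}$ acting on a compactly supported smooth function; the inversion identity and the finite-energy bound are then essentially bookkeeping in Fourier space and an elementary estimate, respectively.
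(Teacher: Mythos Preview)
Your proposal is correct. The construction of $\psi$ is the same as the paper's (Landkof's distribution $\kappa_{-\alpha}$ is, up to a constant, the kernel of $(-\Delta)^{\alpha/2}$), and where the paper simply cites \cite[Lemma~1.1]{L} for (\ref{e-1}) and (\ref{psiinf}) you supply direct arguments via the Fourier transform and the hypersingular representation, which are standard and valid.

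The genuine difference is in the finite-energy step. The paper splits $\nu:=\psi^\pm\,dx$ into its restrictions to $\overline{B}(0,1)$ and $\overline{B}(0,1)^c$; the first has bounded potential on a compact set, while for the second the paper applies the Kelvin transform $\mathcal K_0$, observes from (\ref{psiinf}) that the transformed measure again has bounded density on $\overline{B}(0,1)$, and invokes the energy invariance (\ref{K}). Your route is more elementary: from $|\psi|\in L^1\cap L^\infty$ you bound $\kappa_\alpha\ast|\psi|$ uniformly by the near/far splitting of the convolution, and then $\kappa_\alpha(|\psi|,|\psi|)\leqslant\|\kappa_\alpha\ast|\psi|\|_\infty\|\psi\|_{L^1}$. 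This avoids the Kelvin machinery entirely and is shorter; the paper's approach, on the other hand, stays within the potential-theoretic toolkit already set up in Section~\ref{sec-pr1} and illustrates once more how the Kelvin transform trades decay at infinity for local boundedness.
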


\begin{proof}For a given $f\in C_0^{\infty}$, write $\psi:=\kappa_{-\alpha}\ast f$, where $\kappa_{-\alpha}(x):=|x|^{-\alpha-n}$ is the distribution defined in \cite[Chapter~I, Section~1, n$^\circ$~2]{L} by means of analytic continuation. Then $\psi\in C^\infty$ by \cite[p.~166]{S}, $f$ having compact support. According to \cite[Lemma~1.1]{L}, both (\ref{e-1}) and (\ref{psiinf}) hold true, whence
\begin{equation}\label{psi}
\psi^{\pm}(x)\leqslant M\min\,\bigl\{1,|x|^{-n-\alpha}\bigr\},
\end{equation}
$M\in(0,\infty)$ being a constant.
It remains to show that $\nu\in\mathcal E_\alpha^+$, where $d\nu:=\psi^\pm\,dx$. (See \cite[pp.~132--133]{FZ} for the proof given below.)

Denote by $\overline{B}=\overline{B}(0,1)$ the closed unit ball in $\mathbb R^n$ and by $\nu_0$ and $\nu_1$ the restrictions of $\nu$ to $\overline{B}$ and $\overline{B}^c$, respectively. Then
$\kappa_\alpha\nu_0=\kappa_\alpha\ast(1_{\overline{B}}\psi^{\pm})$, $1_{\overline{B}}$ being the indicator function for $\overline{B}$, is bounded on $\overline{B}$, hence $\nu_0\in\mathcal E_\alpha^+$.

To prove that $\nu_1\in\mathcal E_\alpha^+$, consider the Kelvin transform $\nu_1^*$ of $\nu_1$ with respect to $S(0,1)$ (noting that $\nu_1(\{0\})=0$), and observe from  (\ref{psi}) by use of (\ref{kelv-m}) that
\[d\nu_1^*(x^*)=|x|^{\alpha-n}1_{\overline{B}\,^c}(x)\psi^{\pm}(x)\,dx\leqslant M|x|^{\alpha-n}|x|^{-n-\alpha}\,dx=M|x|^{-2n}\,dx=M\,dx^*,
\]
the last equality being valid because $|x|^{-n}\,dx=|x^*|^n\,dx^*$. (In fact, write $x=r\xi$, where $r:=|x|$ and $\xi$ ranges over the sphere $S(0,1)$ endowed with its surface measure $d\xi$. Then $dx=r^{n-1}\,dr\,d\xi$ and similarly $dx^*=(r^*)^{n-1}\,dr^*\,d\xi$ with $r^*:=|x^*|=r^{-1}$, hence $dr^*=-r^{-2}\,dr$. We may neglect the minus sign (change of orientation) and conclude that indeed $dx^*=|x|^{-2n}\,dx$.) Thus the situation for $\nu_1^*$ is essentially the same as above for $\nu_0$, both being supported by the ball $\overline{B}$ and having bounded density. Therefore, $\nu_1^*$ has finite energy; hence so does $\nu_1$, by (\ref{K}).
\end{proof}

\subsection{Proof of Theorem~\ref{pr1}} Basing on Proposition~\ref{cor-count} and Lemma~\ref{1.1}, suppose that $f\in C_0^\infty$ and choose $\psi\in\mathcal E_\alpha$ with $\kappa_\alpha\psi=f$. Then
\[\nu:=\psi^{\pm}\in\mathcal E^+_\alpha,\]
which by (\ref{alternative}) with $\mu:=\varepsilon_y$ and $\sigma:=\nu=\psi^\pm$ gives
\begin{equation}\label{meas-inf}\varepsilon_y^A(f)
=\int\kappa_\alpha\psi\,d\varepsilon_y^A
=\int\kappa_\alpha\psi^A\,d\varepsilon_y
=\kappa_\alpha\psi^A(y).
\end{equation}
When varying $y$, $\kappa_\alpha\psi^A(y)$ is a finite-valued continuous function of $y\in\overline{A}^c$ (because $(\psi^{\pm})^A$ is supported by $\overline{A}$), hence so is $\varepsilon_y^A(f)$. Thus (\ref{e-cont}) with $z\in\overline{A}^c$ indeed holds.

Let now $z\in A^r$. Similarly as above, (\ref{e-cont1}) will follow if we show
\begin{equation}\label{chain}\lim_{y\to z}\,\kappa_\alpha\nu^A(y)=\kappa_\alpha\nu^A(z)=\kappa_\alpha\nu(z),\end{equation}
the latter equality being valid by (\ref{reg-pot}).

Assume first that $\kappa_\alpha\nu$ is continuous on $\mathbb R^n$. Since $\kappa_\alpha\nu^A$ is lower semicontinuous (l.s.c.) and not greater than $\kappa_\alpha\nu$ on $\mathbb R^n$, cf.\ (\ref{ineq2}), we have
\begin{align*}\kappa_\alpha\nu^A(z)&\leqslant\liminf_{y\to z}\,\kappa_\alpha\nu^A(y)\leqslant\limsup_{y\to z}\,\kappa_\alpha\nu^A(y)\\{}&\leqslant\limsup_{y\to z}\,\kappa_\alpha\nu(y)=\kappa_\alpha\nu(z)=\kappa_\alpha\nu^A(z),\end{align*}
which establishes (\ref{chain}).

If $\kappa_\alpha\nu$ now is not continuous, choose according to \cite[Theorem~3.7]{L} an increasing sequence $(\mu_k)\subset\mathcal E^+_\alpha$ such that
$\mu_k\to\nu$ vaguely,
$\kappa_\alpha\mu_k\in C(\mathbb R^n)$, and $\kappa_\alpha\mu_k\uparrow\kappa_\alpha\nu$ pointwise on $\mathbb R^n$. Applying (\ref{chain}) to $\mu_k$ in place of $\nu$ then gives
\begin{equation}\label{hryu}\lim_{y\to z}\,\kappa_\alpha\mu_k^A(y)=\kappa_\alpha\mu_k^A(z)=\kappa_\alpha\mu_k(z)\text{ \ for all\ }k\in\mathbb N.\end{equation}
But, as seen from \cite[Proof of Theorem~3.10]{Z-bal} (cf.\ also \cite[Remark~3.11]{Z-bal}), the sequence $(\mu_k^A)$
likewise increases and converges vaguely to $\nu^A$, and moreover
\begin{equation}\label{hryuu}\kappa_\alpha\mu_k^A\uparrow\kappa_\alpha\nu^A\text{ \ pointwise on\ }\mathbb R^n.\end{equation}
Therefore letting $k\to\infty$ in (\ref{hryu}) again results in (\ref{chain}). Indeed,
\begin{align*}\kappa_\alpha\nu^A(z)&\leqslant\liminf_{y\to z, \ k\to\infty}\,\kappa_\alpha\mu_k^A(y)\leqslant\limsup_{y\to z, \ k\to\infty}\,\kappa_\alpha\mu_k^A(y)\\{}&\leqslant\limsup_{k\to\infty}\,\left(\limsup_{y\to z}\,\kappa_\alpha\mu_k^A(y)\right)=\limsup_{k\to\infty}\,\kappa_\alpha\mu_k^A(z)=\kappa_\alpha\nu^A(z),\end{align*}
where the first inequality holds because
$\mu_k^A\otimes\varepsilon_y\to\nu^A\otimes\varepsilon_z$ vaguely as $k\to\infty$ and $y\to z$ (cf.\ \cite[Chapter~III, Section~5, Exercise~5]{B2}), the former equality is valid by (\ref{hryu}), and the latter by (\ref{hryuu}). Thus
\begin{align*}\kappa_\alpha\nu^A(z)=\lim_{y\to z, \ k\to\infty}\,\kappa_\alpha\mu_k^A(y)=\lim_{y\to z}\,\left(\lim_{k\to\infty}\,\kappa_\alpha\mu_k^A(y)\right)=\lim_{y\to z}\,\kappa_\alpha\nu^A(y)\end{align*}
as required.

\section{Support of the inner $\alpha$-harmonic measure}\label{sec-desc} The aim of this section is to describe the Euclidean support $S(\varepsilon_y^A)$ of the inner $\alpha$-har\-mo\-nic measure $\varepsilon_y^A$. While doing this, there is no loss of generality in assuming that $A$ coincides with its \emph{reduced kernel\/} $\breve{A}$, defined as the set of all $x\in A$ such that
$c_\alpha(B(x,r)\cap A\bigr)>0$ for all $r>0$ \cite[p.~164]{L}. In fact, $c_\alpha(A\setminus\breve{A})=0$ by (\ref{sub}), whence
\begin{equation}\label{3}\varepsilon_y^A=\varepsilon_y^{\breve{A}}.\end{equation}
If $\alpha=2$, assume moreover that
\begin{equation}\label{5}c_\alpha(A^i)=0\end{equation} and also, for simplicity, that $\overline{A}^c$ is an (open connected) domain $D$.\footnote{By (\ref{KE}) (the Kell\-ogg--Ev\-ans type theorem), (\ref{5}) is certainly fulfilled if $A$ is closed. For the case where $A$ is closed while $A^c$ disconnected, see \cite[Theorems~7.2, 8.5]{Z-bal}.}

\begin{theorem}\label{desc-sup} Under these hypotheses, if moreover\/ $y\not\in A^r$, then\footnote{If $y\in A^r$, then $S(\varepsilon_y^A)=\{y\}$.}
\begin{equation}\label{lemma-desc-riesz}
S(\varepsilon_y^A)=\left\{
\begin{array}{lll} \overline{A} & \text{if} & \alpha<2,\\ \partial D  & \text{if} & \alpha=2.\\
\end{array} \right.
\end{equation}
\end{theorem}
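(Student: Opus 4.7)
The plan is to establish the two inclusions separately, routing the reverse direction through the Kelvin identity $\varepsilon_y^A=\mathcal K_y\gamma_{A_y^*}$ supplied by Corollary~\ref{conc2} (available because $y\notin A^r$) together with the classical description of the support of the Riesz equilibrium measure on compact sets. For the inclusion $S(\varepsilon_y^A)\subset\overline A$, one invokes (\ref{v-conv}): each $\varepsilon_y^K$ with $K\in\mathfrak C_A$ is concentrated on $K\subset\overline A$, and since $\overline A$ is closed, the vague limit $\varepsilon_y^A$ is concentrated on $\overline A$. This already settles $\subset$ when $\alpha<2$.

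When $\alpha=2$, one must strengthen this to $S(\varepsilon_y^A)\subset\partial D$; I would argue $\varepsilon_y^A(V)=0$ for every open ball $V$ with $\overline V\subset\mathrm{int}(\overline A)$ and $y\notin\overline V$. Set $w:=\kappa_2\varepsilon_y-\kappa_2\varepsilon_y^A$. By (\ref{ineq2}) $w\geq0$ on $\mathbb R^n$; by (\ref{reg-pot}) $w$ vanishes on $A^r$, and since $V\setminus A^r\subset\overline A\setminus A^r=A^i$ has capacity zero by the hypothesis $c_2(A^i)=0$, $w=0$ n.e.\ on $V$. Now $\kappa_2\varepsilon_y$ is harmonic on $V$ while $\kappa_2\varepsilon_y^A$ is superharmonic, so $w$ is subharmonic on $V$. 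Since $\{w>0\}\cap V$ has inner capacity zero, hence Lebesgue measure zero, the subharmonic mean value inequality $w(x)\leq\mathrm{avg}_{B(x,r)}\,w$ for each $B(x,r)\subset V$ forces $w(x)\leq0$, so $w\equiv0$ on $V$. Consequently $\kappa_2\varepsilon_y^A$ is harmonic on $V$, and by Riesz decomposition $\varepsilon_y^A(V)=0$. Varying $V$ (noting $\{y\}$ has capacity zero and $\varepsilon_y^A$ is $c_2$-absolutely continuous by Corollary~\ref{conc2}) yields $S(\varepsilon_y^A)\subset\overline A\setminus\mathrm{int}(\overline A)=\partial\overline A=\partial D$.

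For the reverse inclusion, (\ref{kelv-m}) makes $J_y$ a bijection from $S(\gamma_{A_y^*})\setminus\{y\}$ onto $S(\varepsilon_y^A)\setminus\{y\}$, so it suffices to describe $S(\gamma_{A_y^*})$. For compact $K$ with $K=\breve K$, classical Landkof theory gives $S(\gamma_K)=K$ when $\alpha<2$ (a manifestation of the non-locality of the Riesz kernel) and $S(\gamma_K)\subset\partial K$, concentrated on the boundary of the unbounded complementary component, when $\alpha=2$. Given $x_0$ in the prescribed target set and any open neighborhood $V$ of $x_0$, I would produce a compact $K_0\subset A_y^*\cap V$ of positive capacity (available via the reduced-kernel hypothesis transported through $J_y$ using (\ref{est}) for $\alpha<2$, or via the connectivity of $\overline A^c=D$ transferred to the $A_y^*$-side for $\alpha=2$) and argue $\gamma_K(V)\geq c>0$ for every compact $K\subset A_y^*$ containing $K_0$; combined with (\ref{v-convv-eq2}) and the vague lower semicontinuity on open sets, this gives $\gamma_{A_y^*}(V)>0$, so $x_0\in S(\gamma_{A_y^*})$. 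Pulling back through $J_y$, and treating the point $y$ itself via Theorem~\ref{har-tot} with (\ref{kelv-mmm}), gives the reverse inclusion. The main technical obstacle is the last step in the $\alpha=2$ case: ensuring $\gamma_K(V)$ remains bounded below as $K\uparrow A_y^*$; here the connectivity of $D$ is essential, preventing the support of $\gamma_K$ from bifurcating in ways that would leave portions of $\partial D$ uncovered in the limit, whereas for $\alpha<2$ the non-local structure of the Riesz kernel handles this automatically.
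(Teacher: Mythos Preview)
Your forward inclusion $S(\varepsilon_y^A)\subset\overline A$ (and $\subset\partial D$ when $\alpha=2$) is fine, and the subharmonicity argument for the Newtonian case is a clean direct route that the paper does not take. The paper instead proves everything on the equilibrium side and transfers once via Kelvin.

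The genuine gap is in your reverse inclusion. You propose to obtain $\overline{A_y^*}\subset S(\gamma_{A_y^*})$ (resp.\ the $\alpha=2$ analogue) by showing $\gamma_K(V)\geq c>0$ uniformly over compact $K\supset K_0$ and then passing to the vague limit. But vague lower semicontinuity on an open set $V$ only gives $\gamma_{A_y^*}(V)\leq\liminf_K\gamma_K(V)$, which is the wrong direction for your purpose; and there is no monotonicity $\gamma_K\leq\gamma_{K'}$ for $K\subset K'$ to rescue this. You flag the $\alpha=2$ case as the obstacle, but the $\alpha<2$ case is equally unfinished: knowing that each $\gamma_K$ has full support $K$ does not by itself prevent the mass near $K_0$ from draining away as $K$ grows.

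The paper sidesteps this entirely. It first proves a companion result (Theorem~\ref{desc-eq}) describing $S(\gamma_A)$ for any $A$ that is $\alpha$-thin at infinity, by a potential-theoretic argument rather than compact approximation: one shows $\kappa_\alpha\gamma_A<1$ on $S(\gamma_A)^c$ using that $\kappa_\alpha\gamma_A$ is $\alpha$-harmonic off its support and $\leq1$ everywhere (Landkof's maximum principle, \cite[Theorem~1.28]{L}, for $\alpha<2$; the classical maximum principle on the domain $D$ for $\alpha=2$). Combined with $\kappa_\alpha\gamma_A=1$ n.e.\ on $A$ and the reduced-kernel hypothesis, this immediately forces $\overline A\subset S(\gamma_A)$. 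The proof of Theorem~\ref{desc-sup} is then a two-line transfer: verify the structural hypotheses survive under $J_y$, apply Theorem~\ref{desc-eq} to $A_y^*$, and pull back via $\varepsilon_y^A=\mathcal K_y\gamma_{A_y^*}$. If you replace your compact-approximation step by this strict-inequality argument for the equilibrium potential, your proof goes through.
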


The proof of Theorem~\ref{desc-sup} (see Section~\ref{sec-proof1}) is based on a description of the support of the inner equilibrium measure $\gamma_A$, given in Theorem~\ref{desc-eq} below.

\begin{theorem}\label{desc-eq}Under the stated  hypotheses, if moreover\/ $A$ is inner\/ $\alpha$-thin at infinity, then
\begin{equation}\label{det2} \kappa_\alpha\gamma_A<1\text{ \ on\ }\overline{A}^c,\end{equation}
\begin{equation}\label{det5}
 S(\gamma_A)=\left\{
\begin{array}{lll} \overline{A}&\text{if}&\alpha<2,\\
\partial D&\text{if}&\alpha=2.\\ \end{array} \right.
\end{equation}
\end{theorem}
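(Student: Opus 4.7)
The plan is to establish \eqref{det2} first, and then to use it (together with the local behaviour of $\kappa_\alpha\gamma_A$ on balls carrying no mass of $\gamma_A$) to pin down $S(\gamma_A)$ in \eqref{det5}. Throughout I shall exploit three standing observations: (a) $\kappa_\alpha\gamma_A\leqslant 1$ on $\mathbb R^n$ with $\kappa_\alpha\gamma_A=1$ on $A^r$ by \eqref{equi1}; (b) $\gamma_A$ is carried by $\overline A$, as it is the vague limit of the $\gamma_K$ in \eqref{v-convv-eq2}, each of which is carried by $K\subset A$; and (c) by the Kellogg--Evans relation \eqref{KE} combined with $A=\breve A$ (augmented by the hypothesis $c_2(A^i)=0$ when $\alpha=2$), the regular part $A^r$ accumulates, with positive inner capacity, at every point of $\overline A$.

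For \eqref{det2}, assume by contradiction that $\kappa_\alpha\gamma_A(x_0)=1$ for some $x_0\in\overline A^c$. By (b), $\kappa_\alpha\gamma_A$ is real-analytic, hence $\alpha$-harmonic, on the open set $\overline A^c$. I then invoke the strong maximum principle---classical on the connected domain $D=\overline A^c$ when $\alpha=2$, and its non-local analog when $\alpha<2$, based on the fact that the fractional Laplacian of $\kappa_\alpha\gamma_A$ vanishes at $x_0$ while the integrand in its non-local representation, $(u(x_0)-u(y))/|x_0-y|^{n+\alpha}$, is everywhere nonnegative---to conclude $\kappa_\alpha\gamma_A\equiv 1$ on $D$ when $\alpha=2$, and on all of $\mathbb R^n$ when $\alpha<2$. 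In the $\alpha=2$ case, combining this with $\kappa_2\gamma_A=1$ on $A^r$ and $c_2(A^i)=0$ promotes the identity to $\kappa_2\gamma_A=1$ n.e.\ on $\mathbb R^n$. Either way, $\gamma_A$ would serve as the inner equilibrium measure for $\mathbb R^n$, forbidden by Theorem~\ref{th-th}~(i)$\Leftrightarrow$(iii) since the series \eqref{iii} plainly diverges when $A=\mathbb R^n$ (the capacity of a spherical shell of radius $\asymp q^k$ being $\asymp q^{k(n-\alpha)}$).

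For \eqref{det5}, the inclusion $S(\gamma_A)\subset\overline A$ comes from (b). When $\alpha<2$, for the reverse inclusion $S(\gamma_A)\supset\overline A$ I argue by contradiction: if $\gamma_A(B(x_0,\rho))=0$ for some $x_0\in\overline A$ and some $\rho>0$, then $\kappa_\alpha\gamma_A$ is real-analytic on $B(x_0,\rho)$, while by (c) a sequence in $A^r\cap B(x_0,\rho)$ converges to $x_0$; continuity therefore forces $\kappa_\alpha\gamma_A(x_0)=1=\sup_{\mathbb R^n}\kappa_\alpha\gamma_A$, and the same non-local strong maximum principle argument as in the proof of \eqref{det2} yields $\kappa_\alpha\gamma_A\equiv 1$ on $\mathbb R^n$, which is impossible. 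When $\alpha=2$, I split \eqref{det5} into two inclusions. For $S(\gamma_A)\cap\mathrm{int}(\overline A)=\emptyset$: on any ball $B\subset\mathrm{int}(\overline A)\subset\overline A$ the hypothesis $c_2(A^i)=0$ upgrades $\kappa_2\gamma_A=1$ n.e.\ on $A$ to $\kappa_2\gamma_A=1$ n.e.\ (hence a.e.) on $B$; superharmonicity of $\kappa_2\gamma_A$ together with $\kappa_2\gamma_A\leqslant 1$ and the solid mean-value inequality then yields $\kappa_2\gamma_A\equiv 1$ on $B$, and equality in the superharmonic mean-value characterisation gives $\gamma_A(B)=0$. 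For $S(\gamma_A)\supset\partial D$: if $x_0\in\partial D$ and $\gamma_A(B(x_0,\rho))=0$, then $\kappa_2\gamma_A$ is harmonic on $B(x_0,\rho)$ and, by (c) and continuity, $\kappa_2\gamma_A(x_0)=1$; the classical strong maximum principle on the connected ball gives $\kappa_2\gamma_A\equiv 1$ on $B(x_0,\rho)$, but $B(x_0,\rho)\cap D\neq\emptyset$ since $x_0\in\partial D$, in direct contradiction to \eqref{det2}.

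The main obstacle I expect is making the non-local strong maximum principle for $\alpha<2$ fully rigorous: the passage ``$u$ is $\alpha$-harmonic on a ball and attains its global supremum there $\Rightarrow$ $u\equiv\sup u$ on $\mathbb R^n$'' must be extracted from the non-local representation of $(-\Delta)^{\alpha/2}$ (or, equivalently, from the $\alpha$-Poisson integral representation on a small sub-ball centred at $x_0$), and the intermediate conclusion ``$u$ is constant almost everywhere'' must be upgraded to ``constant everywhere'' via the lower semicontinuity of Riesz potentials. The remaining ingredients---the accumulation of $A^r$ at every point of $\overline A$ with positive capacity, and the superharmonic mean-value characterisation forcing the Riesz measure to vanish on a ball---are routine.
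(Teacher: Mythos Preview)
Your proposal is correct and follows essentially the same route as the paper. The only organizational difference is that, for $\alpha<2$, the paper first proves the sharper inequality $\kappa_\alpha\gamma_A<1$ on $S(\gamma_A)^c$ (rather than on $\overline A^c$), then deduces $S(\gamma_A)=\overline A$ from it by your observation~(c), and finally reads off \eqref{det2}; this ordering needs only one invocation of the strong maximum principle, whereas your ordering uses it twice (once for \eqref{det2}, once for $S(\gamma_A)\supset\overline A$). For the non-local strong maximum principle you flag as the main obstacle, the paper simply cites \cite[Theorem~1.28]{L} together with the defining mean-value property of $\alpha$-superharmonic functions to upgrade ``$=1$ a.e.'' to ``$=1$ everywhere''; your sketch via the $\alpha$-Poisson representation on a small ball is an equivalent way to see it. For $\alpha=2$ the paper obtains $S(\gamma_A)\subset\partial D$ by citing \cite[Theorem~1.13]{L} directly from \eqref{2}, rather than through your mean-value argument, but the content is the same. (One cosmetic point: the phrase ``real-analytic, hence $\alpha$-harmonic'' inverts the logic---what you need, and what holds, is that $\kappa_\alpha\gamma_A$ is $\alpha$-harmonic off $S(\gamma_A)$ because $\gamma_A$ puts no mass there.)
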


\begin{proof} Assuming $\alpha<2$, we shall first prove that
\begin{equation}\label{lesssup}\kappa_\alpha\gamma_A<1\text{ \ on\ }S(\gamma_A)^c.\end{equation}
Suppose this fails for some $x_0\in S(\gamma_A)^c$; then $\kappa_\alpha\gamma_A(x_0)=1$. The potential $\kappa_\alpha\gamma_A$ being $\alpha$-super\-har\-monic on $\mathbb R^n$, $\alpha$-har\-monic on $S(\gamma_A)^c$ \cite[Chapter~I, Section~6, n$^\circ$~20]{L}, and ${}\leqslant1$ on $\mathbb R^n$, applying \cite[Theorem~1.28]{L} gives
$\kappa_\alpha\gamma_A=1$ a.e.\ on $\mathbb R^n$. As follows from the definition of $\alpha$-super\-har\-mon\-ic\-ity, this equality must hold even everywhere on $\mathbb R^n$; so $\gamma_A$ serves as the inner
equilibrium measure on the whole of $\mathbb R^n$, which is however impossible, e.g.\ by Theorem~\ref{th-th}(iii).

To prove the former equality in (\ref{det5}), suppose to the contrary that there is
$x_1\in\overline{A}$ such that $x_1\not\in S(\gamma_A)$, and let $V\subset S(\gamma_A)^c$ be an open neighborhood
of $x_1$. By (\ref{lesssup}), $\kappa_\alpha\gamma_A<1$ on $V$. But $c_\alpha(V\cap A)>0$ in view of $A=\breve{A}$, hence there exists $x_2\in V\cap A$ with  $\kappa_\alpha\gamma_A(x_2)=1$, cf.\ (\ref{eq-ex0}). The contradiction obtained shows that, indeed, $S(\gamma_A)=\overline{A}$.
Substituting this equality into (\ref{lesssup}) establishes (\ref{det2}).

Let now $\alpha=2$. Note from (\ref{equi1}) that under the stated conditions on the set $A$,
\begin{equation}\label{2}\kappa_\alpha\gamma_A=1\text{ \ n.e.\ on\ }\overline{A} \ \bigl({}=D^c\bigr).\end{equation}
If (\ref{det2}) fails,
$\kappa_\alpha\gamma_A$ takes its maximum value $1$ at some $x_3\in D$, hence for all $x\in D$, $\kappa_\alpha\gamma_A$ being harmonic on the domain $D$. Thus (\ref{2}) holds, in fact, n.e.\ on $\mathbb R^n$, which is however impossible, e.g.\ by \cite[Theorem~1.13]{L}. This proves (\ref{det2}).

We also see from (\ref{2}), again by use of \cite[Theorem~1.13]{L}, that $S(\gamma_A)\subset\partial D$.
For the converse, suppose to the contrary that there is $x_4\in\partial D$ such that $x_4\notin S(\gamma_A)$, and let $V_1\subset S(\gamma_A)^c$ be an open neighborhood of $x_4$. As $c_2(V_1\cap\overline{A})>0$, (\ref{2}) implies that $\kappa_2\gamma_A$ takes the value $1$ at some point in $V_1$, hence everywhere on $V_1$, again by the maximum principle. This contradicts (\ref{det2}), $V_1\cap D$ being nonempty.\end{proof}

\subsection{Proof of Theorem~\ref{desc-sup}}\label{sec-proof1}For $y\not\in A^r$, the $J_y$-image $A_y^*$ of $A\setminus\{y\}$ is $\alpha$-thin at infinity, hence  the equilibrium measure $\gamma_{A_y^*}$ exists (Corollary~\ref{conc2} and Theorem~\ref{th-th}). Similarly as it does for $A$, $A_y^*$ coincides with its reduced kernel, the inverse of any $E\subset\mathbb R^n$ with $c_\alpha(E)=0$ being again of zero inner capacity \cite[p.~261]{L}. If $\alpha=2$, then the additional requirements imposed on $A$ do hold also for $A_y^*$.\footnote{We use here the fact that $J_y$ maps $A^i\setminus\{y\}$ onto $(A^*_y)^i$, which can be concluded from the Wiener type criterion of inner $\alpha$-ir\-reg\-ul\-ar\-ity with the aid of (\ref{est}).\label{F}} Applying (\ref{det5}) to $\gamma_{A_y^*}$ we therefore conclude that for $\alpha<2$, $S(\gamma_{A_y^*})=\overline{A_y^*}$, while for $\alpha=2$, $S(\gamma_{A_y^*})$ equals the Euclidean boundary of $J_y(D\setminus\{y\})$. As $\varepsilon_y^A=\mathcal K_y\gamma_{A_y^*}$, this results in (\ref{lemma-desc-riesz}).

\section{Integral representation of inner balayage and applications}\label{sec-int}

\begin{theorem}\label{th-int-rep} For\/ $A\subset\mathbb R^n$ and\/ $\mu\in\mathfrak M^+$ arbitrary,
\begin{equation}\label{L-repr}\mu^A=\int\varepsilon_y^A\,d\mu(y).\end{equation}
\end{theorem}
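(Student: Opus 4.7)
My plan is to prove (\ref{L-repr}) by testing both sides against the countable family $S\subset C_0^\infty$ supplied by Lemma~\ref{l-count} and invoking Proposition~\ref{cor-count}. The computational engine is the orthogonality relation (\ref{alternative1}) together with the potential representation of test functions from Lemma~\ref{1.1}, exactly as exploited in the proof of Theorem~\ref{pr1}.

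First I would fix $f\in S$ and apply Lemma~\ref{1.1} to write $f=\kappa_\alpha\psi$, where $d\psi=\psi\,dx$ is a signed measure with $\psi^+,\psi^-\in\mathcal E_\alpha^+$ and $\psi(x)=O(|x|^{-n-\alpha})$ at infinity. Splitting into positive and negative parts and applying (\ref{alternative}) with $\mu:=\varepsilon_y$ and $\sigma:=\psi^\pm$ (as in the derivation of (\ref{meas-inf})) yields
\begin{equation*}
\varepsilon_y^A(f)=\kappa_\alpha(\psi^+)^A(y)-\kappa_\alpha(\psi^-)^A(y)\quad\text{for every }y\in\mathbb R^n.
\end{equation*}
The right-hand side is a difference of l.s.c.\ potentials, hence Borel measurable; combined with the uniform approximation of Lemma~\ref{l-count}, this upgrades to $\mu$-measurability of $y\mapsto\varepsilon_y^A(g)$ for every $g\in C_0^+$, so that $\lambda:=\int\varepsilon_y^A\,d\mu(y)$ is a well-defined positive Radon measure.

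The central calculation then runs as follows. Integrating the previous display against $\mu$ and applying Tonelli to each nonnegative summand,
\begin{equation*}
\lambda(f)=\kappa_\alpha\bigl(\mu,(\psi^+)^A\bigr)-\kappa_\alpha\bigl(\mu,(\psi^-)^A\bigr).
\end{equation*}
By (\ref{alternative1}) applied separately with $\theta:=\psi^+$ and $\theta:=\psi^-$, each term equals $\kappa_\alpha(\mu^A,\psi^\pm)=\int\kappa_\alpha\psi^\pm\,d\mu^A$, whence
\begin{equation*}
\lambda(f)=\int\kappa_\alpha\psi\,d\mu^A=\int f\,d\mu^A=\mu^A(f).
\end{equation*}
Since $f\in S$ was arbitrary, Proposition~\ref{cor-count} forces $\lambda=\mu^A$, which is exactly (\ref{L-repr}).

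The main obstacle I expect is not the formal chain above but the verification that every intermediate quantity is finite, which is needed both to legitimise the sign-split and to apply Fubini. The key estimate is $\kappa_\alpha\psi^\pm(y)=O(|y|^{\alpha-n})$ at infinity (from the decay of $\psi^\pm$ together with $\psi^\pm\in L^1$); combined with $\kappa_\alpha(\psi^\pm)^A\leqslant\kappa_\alpha\psi^\pm$ from (\ref{ineq2}) and the standing assumption $\int_{|y|>1}|y|^{\alpha-n}\,d\mu(y)<\infty$, this gives finiteness of $\kappa_\alpha(\mu,(\psi^\pm)^A)$. The symmetric bound $\kappa_\alpha\mu^A\leqslant\kappa_\alpha\mu$ from (\ref{ineq2}) transfers the same integrability assumption from $\mu$ to $\mu^A$ and so handles $\kappa_\alpha(\mu^A,\psi^\pm)$ as well.
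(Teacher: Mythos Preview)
Your argument is correct and follows essentially the same route as the paper: Borel measurability of $y\mapsto\varepsilon_y^A(f)$ via (\ref{meas-inf}), finiteness via the decay of $\kappa_\alpha\psi^\pm$ together with (\ref{ineq2}) and the standing hypothesis $\kappa_\alpha\mu\not\equiv\infty$, and the identification through the duality relation (\ref{alternative1}). The only cosmetic difference is that the paper, after defining $\nu:=\int\varepsilon_y^A\,d\mu(y)$, first computes $\kappa_\alpha\nu$ and then verifies the characterizing identity $\kappa_\alpha(\nu,\sigma)=\kappa_\alpha(\mu,\sigma^A)$ for every $\sigma\in\mathcal E_\alpha^+$, whereas you test $\lambda$ and $\mu^A$ directly against the countable family $S$ and invoke Proposition~\ref{cor-count}; both routes rest on the same Fubini/duality computation.
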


\begin{proof} Fix $\mu\in\mathfrak M^+$ and note that for any given $f\in C_0^+$, $\varepsilon_y^A(f)$ is a $\mu$-int\-egr\-able function of $y\in\mathbb R^n$. In fact, if $f\in C_0^\infty$, $\varepsilon_y^A(f)$ is Borel measurable, being the difference between two l.s.c.\ functions (cf.\ (\ref{meas-inf})). For $f\in C^+_0$ arbitrary, find by Lemma~\ref{l-count} a sequence $(f_k)\subset C_0^\infty$ and a function $\varphi\in C_0^\infty$, the $f_k$ and $\varphi$ being positive, such that
\[\Bigl|\int(f-f_k)\,d\varepsilon_y^A\Bigr|\leqslant\varepsilon_y^A(\varphi)/k,\]
hence $\varepsilon_y^A(f_k)\to\varepsilon_y^A(f)$ as $k\to\infty$ for all $y\in\mathbb R^n$.\footnote{This convergence is actually even uniform on $\mathbb R^n$, which can be seen by use of $\varepsilon_y^A(\mathbb R^n)\leqslant1$.}
Each of the functions $\varepsilon_y^A(f_k)$, $k\in\mathbb N$, being Borel measurable, so is their pointwise limit $\varepsilon_y^A(f)$ (Egoroff's theorem \cite[Chapter~IV, Section~5, Theorem~2]{B2}).
It thus remains to show that
\[\int\varepsilon_y^A(f)\,d\mu(y)<\infty.\]
Since $\kappa_\alpha\mu\ne\infty$ n.e.\ on $\mathbb R^n$, there is $y_0\in\mathbb R^n$ such that $\kappa_\alpha\mu(y_0)<\infty$ as well as
\[f(x)\leqslant F(x):=M\min\{1,|x-y_0|^{\alpha-n}\}\text{ \ for all\ }x\in\mathbb R^n,\]
$M\in(0,\infty)$ being a constant. According to \cite[Theorem~1.31]{L}, $F=\kappa_\alpha\omega$ on $\mathbb R^n$ for some $\omega\in\mathfrak M^+$, whence
\begin{align*}\int\varepsilon_y^A(f)\,d\mu(y)&=\int d\mu(y)\int f(x)\,d\varepsilon_y^A(x)\leqslant\int d\mu(y)\int\kappa_\alpha\omega(x)\,d\varepsilon_y^A(x)\\
{}&=\int d\mu(y)\int\kappa_\alpha\omega^A(x)\,d\varepsilon_y(x)=\int\kappa_\alpha\omega^A(y)\,d\mu(y)\\
{}&\leqslant\int\kappa_\alpha\omega(y)\,d\mu(y)\leqslant M\int\frac{d\mu(y)}{|y-y_0|^{n-\alpha}}=M\kappa_\alpha\mu(y_0),\end{align*}
the second equality being valid by (\ref{alternative1}) with $\mu:=\omega$ and $\theta:=\varepsilon_y$, and the second inequality by (\ref{ineq2}).
As $\kappa_\alpha\mu(y_0)<\infty$, the $\mu$-int\-egr\-ab\-il\-ity of $\varepsilon_y^A(f)$ follows.

According to \cite[Chapter~V, Section~3, n$^\circ$~1]{B2}, one can therefore define the Radon measure $\nu:=\int\varepsilon_y^A\,d\mu(y)$ on $\mathbb R^n$ by means of the formula
\[
\int f(z)\,d\nu(z)=\int\biggl(\int f(z)\,d\varepsilon_y^A(z)\biggr)\,d\mu(y)\text{ \ for every\ }f\in C_0^+.
\]
Moreover, this identity remains valid when $f$ is allowed to be any positive l.s.c.\ function on $\mathbb R^n$, see \cite[Chapter~V, Section~3, Proposition~2(c)]{B2}. For a given $x\in\mathbb R^n$, we apply this to $f(z)=|x-z|^{\alpha-n}$, $z\in\mathbb R^n$, and thus obtain
\begin{equation}\label{repr-th1}
\kappa_\alpha\nu(x)=\int\biggl(\int|x-z|^{\alpha-n}\,d\varepsilon_y^A(z)\biggr)\,d\mu(y)=\int\kappa_\alpha\varepsilon_y^A(x)\,d\mu(y).
\end{equation}
To establish (\ref{L-repr}), we only need to prove that $\nu=\mu^A$, or equivalently (cf.\ (\ref{alternative}))
\[\kappa_\alpha(\nu,\sigma)=\kappa_\alpha(\mu,\sigma^A)\text{ \ for every\ }\sigma\in\mathcal E^+_\alpha.\]
Using Fubini's theorem we conclude from (\ref{repr-th1}) that indeed
\begin{align*}\kappa_\alpha(\nu,\sigma)&=\int\kappa_\alpha\nu(x)\,d\sigma(x)=\int\biggl(\int\kappa_\alpha\varepsilon_y^A(x)\,d\mu(y)\biggr)\,d\sigma(x)\\
  {}&={\int\biggl(\int\kappa_\alpha\varepsilon_y^A(x)\,d\sigma(x)\biggr)\,d\mu(y)
  =\int\biggl(\int\kappa_\alpha\varepsilon_y(x)\,d\sigma^A(x)\biggr)\,d\mu(y)}\\
  {}&=\int\biggl(\int|x-y|^{\alpha-n}\,d\mu(y)\biggr)\,d\sigma^A(x)
  =\int\kappa_\alpha\mu\,d\sigma^A=\kappa_\alpha(\mu,\sigma^A),
\end{align*}
the fourth equality being valid by (\ref{alternative}) with $\mu:=\varepsilon_y$.\end{proof}

To give some applications of Theorem~\ref{th-int-rep}, we need the following observation.

\begin{theorem}\label{measur}For\/ $A$ arbitrary, the set\/ $A^r$ is Borel measurable.\end{theorem}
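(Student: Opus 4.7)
The plan is to characterize $A^r$ as a countable intersection of Borel sets by combining Proposition~\ref{cor-count} (countable determination of vague equality) with the Borel measurability of $y\mapsto\varepsilon_y^A(g)$ established already in the proof of Theorem~\ref{th-int-rep}.

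More precisely, by definition $y\in A^r$ if and only if the positive Radon measures $\varepsilon_y^A$ and $\varepsilon_y$ coincide. By Proposition~\ref{cor-count}, the latter is equivalent to the countable family of scalar identities $\varepsilon_y^A(g)=\varepsilon_y(g)=g(y)$ holding for every $g$ in the countable set $S\subset C_0^\infty$ given by Lemma~\ref{l-count}. Consequently,
\[
A^r=\bigcap_{g\in S}\bigl\{y\in\mathbb R^n:\ \varepsilon_y^A(g)=g(y)\bigr\}.
\]
Since $S$ is countable, it suffices to show that for each $g\in S$ the set $\{y:\varepsilon_y^A(g)=g(y)\}$ is Borel.

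Fix $g\in S\subset C_0^\infty$. By Lemma~\ref{1.1} we can write $g=\kappa_\alpha\psi$ with $\psi=\psi^+-\psi^-$ and signed measure $d\psi=\psi\,dx$ whose positive and negative parts $\nu^{\pm}:=\psi^{\pm}\,dx$ belong to $\mathcal E_\alpha^+$. Applying (\ref{alternative}) exactly as in (\ref{meas-inf}) gives, for every $y\in\mathbb R^n$,
\[
\varepsilon_y^A(g)=\kappa_\alpha(\nu^+)^A(y)-\kappa_\alpha(\nu^-)^A(y).
\]
Each potential $\kappa_\alpha(\nu^{\pm})^A$ is lower semicontinuous on $\mathbb R^n$, hence Borel measurable, so their difference $y\mapsto\varepsilon_y^A(g)$ is Borel measurable. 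Since $y\mapsto g(y)$ is continuous, the set $\{y:\varepsilon_y^A(g)=g(y)\}$ is Borel, and therefore so is $A^r$.

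I do not expect any serious obstacle here: the two main ingredients, namely the countable vague-determining family $S$ and the representation (\ref{meas-inf}) expressing $\varepsilon_y^A(g)$ as a difference of balayage potentials, are already in place. The only mild point to verify is that Proposition~\ref{cor-count} really does supply the equivalence $\varepsilon_y^A=\varepsilon_y\iff\varepsilon_y^A(g)=g(y)$ for all $g\in S$, which is exactly the final assertion of that proposition applied to $\mu:=\varepsilon_y^A$ and $\nu:=\varepsilon_y$.
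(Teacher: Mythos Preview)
Your proposal is correct and follows essentially the same approach as the paper: both proofs use Proposition~\ref{cor-count} to reduce the condition $\varepsilon_y^A=\varepsilon_y$ to countably many scalar equalities over $g\in S$, then invoke Lemma~\ref{1.1} together with (\ref{alternative}) (i.e.\ the representation (\ref{meas-inf})) to write $y\mapsto\varepsilon_y^A(g)$ as a difference of lower semicontinuous potentials, yielding $A^r$ as a countable intersection of Borel sets. The only difference is cosmetic---the paper works directly with the signed $\psi$ and writes the condition as $\kappa_\alpha\psi(y)=\kappa_\alpha\psi^A(y)$, while you split into $\psi^{\pm}$ explicitly.
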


\begin{proof}By definition, $A^r$ consists of all $y\in\mathbb R^n$ with $\varepsilon_y^A=\varepsilon_y$, or equivalently with $\varepsilon_y^A(f)=\varepsilon_y(f)$ for all $f\in S$ (Proposition~\ref{cor-count}), where $S\subset C_0^\infty$ is the countable set introduced in Lemma~\ref{l-count}. Denoting by $\mathcal E_\alpha^\circ$ the (countable) set of $\psi\in\mathcal E_\alpha$ such that $f=\kappa_\alpha\psi\in S$ (Lemma~\ref{1.1}), we therefore see that $y$ belongs to $A^r$ if and only if
\[\kappa_\alpha\psi(y)=\int\kappa_\alpha\psi\,d\varepsilon_y=\int\kappa_\alpha\psi\,d\varepsilon_y^A=\int\kappa_\alpha\psi^A\,d\varepsilon_y=\kappa_\alpha\psi^A(y)\text{ \ for all\ }\psi\in\mathcal E_\alpha^\circ,\]
where the third equality holds by (\ref{alternative}) with $\mu:=\varepsilon_y$ and $\sigma:=\psi^\pm$. Being thus a countable intersection of Borel measurable sets, $A^r$ is indeed Borel measurable.\end{proof}

\begin{corollary}\label{b-sum}For any\/ $\mu\in\mathfrak M^+$,
\begin{equation}\label{bal-sum}\mu^A=(\mu|_{A^{rc}})^A+\mu|_{A^r},\end{equation}
the restrictions\/ $\mu|_{A^r}$ and\/ $\mu|_{A^{rc}}$ being well defined in view of Theorem\/~{\rm\ref{measur}}.
\end{corollary}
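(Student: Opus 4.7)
The plan is to derive this corollary as an immediate consequence of the integral representation in Theorem~\ref{th-int-rep}, splitting the integration domain according to whether $y$ is inner $\alpha$-regular for $A$ or not. The Borel measurability of $A^r$ established in Theorem~\ref{measur} is exactly what is needed to legitimize this splitting, because it guarantees that $\mu|_{A^r}$ and $\mu|_{A^{rc}}$ are well-defined Radon measures for every $\mu\in\mathfrak M^+$, and that $\mu = \mu|_{A^r} + \mu|_{A^{rc}}$.

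The first step is to write, via Theorem~\ref{th-int-rep},
\[\mu^A=\int\varepsilon_y^A\,d\mu(y)=\int_{A^r}\varepsilon_y^A\,d\mu(y)+\int_{A^{rc}}\varepsilon_y^A\,d\mu(y),\]
where the decomposition is justified by the Borel measurability of $A^r$ and standard properties of integration of vector-valued (here measure-valued) functions as developed in \cite[Chapter~V, Section~3]{B2}. On the first summand, the defining property of $A^r$ yields $\varepsilon_y^A=\varepsilon_y$ for $y\in A^r$, and therefore, testing against any $f\in C_0^+$,
\[\int\Bigl(\int f\,d\varepsilon_y^A\Bigr)\,d\mu|_{A^r}(y)=\int f(y)\,d\mu|_{A^r}(y)=\mu|_{A^r}(f),\]
so that $\int_{A^r}\varepsilon_y^A\,d\mu(y)=\mu|_{A^r}$ as Radon measures. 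For the second summand, I apply Theorem~\ref{th-int-rep} in the reverse direction to the measure $\mu|_{A^{rc}}\in\mathfrak M^+$, obtaining
\[\int_{A^{rc}}\varepsilon_y^A\,d\mu(y)=\int\varepsilon_y^A\,d\mu|_{A^{rc}}(y)=(\mu|_{A^{rc}})^A.\]
Combining the two identities gives (\ref{bal-sum}).

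There is essentially no hard step here; the substance of the corollary is already packaged in Theorems~\ref{th-int-rep} and~\ref{measur}. The only point requiring a moment of care is the verification that one may legitimately split the integral $\int\varepsilon_y^A\,d\mu(y)$ along the Borel partition $\mathbb R^n=A^r\sqcup A^{rc}$; this is standard for integrals of positive l.s.c.\ functions against Radon measures, and it is exactly this aspect that makes the Borel measurability of $A^r$ from Theorem~\ref{measur} indispensable.
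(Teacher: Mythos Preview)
Your proof is correct and follows essentially the same route as the paper's. The paper applies the integral representation (\ref{L-repr}) to $\mu|_{A^r}$ to obtain $(\mu|_{A^r})^A=\mu|_{A^r}$ and then invokes the additivity of inner balayage, whereas you split the integral $\int\varepsilon_y^A\,d\mu(y)$ directly over the Borel partition $A^r\sqcup A^{rc}$; the two arguments are trivially equivalent.
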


\begin{proof} Applying (\ref{L-repr}) to $\mu|_{A^r}$ gives
$(\mu|_{A^r})^A=\mu|_{A^r}$, and (\ref{bal-sum}) follows by the additivity of inner balayage.\end{proof}

\begin{corollary}\label{C} $(\mu|_{A^{rc}})^A$ is\/ $c_\alpha$-ab\-sol\-ut\-ely continuous. Hence, $\mu^A$ is\/ $c_\alpha$-ab\-sol\-ut\-ely continuous if and only if\/ $\mu|_{A^r}$ is so.
\end{corollary}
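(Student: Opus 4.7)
The plan is to derive this corollary directly from the integral representation $\mu^A = \int \varepsilon_y^A\,d\mu(y)$ of Theorem~\ref{th-int-rep} together with Corollary~\ref{conc2}, which characterizes $c_\alpha$-ab\-sol\-ute continuity of the harmonic measure $\varepsilon_y^A$ pointwise in $y$, namely $\varepsilon_y^A$ is $c_\alpha$-ab\-sol\-ut\-ely continuous if and only if $y\in A^{rc}$.

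First, I would apply Theorem~\ref{th-int-rep} to the restriction $\mu|_{A^{rc}}$, which is a well-defined Radon measure by the Borel measurability of $A^r$ established in Theorem~\ref{measur}. This gives
\[(\mu|_{A^{rc}})^A = \int \varepsilon_y^A\,d(\mu|_{A^{rc}})(y).\]
By Corollary~\ref{conc2}, for every $y \in A^{rc}$ the measure $\varepsilon_y^A$ is $c_\alpha$-ab\-sol\-ut\-ely continuous, so in particular $\varepsilon_y^A(K) = 0$ whenever $K$ is a compact set with $c_\alpha(K) = 0$. To turn this pointwise vanishing into $(\mu|_{A^{rc}})^A(K) = 0$, I would extend the integral identity from test functions in $C_0^+$ to the indicator $1_K$: pick a decreasing sequence of open neighborhoods $V_n \supset K$ with compact closure and $\bigcap_n V_n = K$, apply the identity to the positive lower semicontinuous function $1_{V_n}$ (permissible by \cite[Chapter~V, Section~3, Proposition~2(c)]{B2}, as already invoked in the proof of Theorem~\ref{th-int-rep}), and pass to the limit by dominated convergence inside the integral; the domination is provided by $\varepsilon_y^A(V_1) \leq 1$, and $\int\varepsilon_y^A(V_1)\,d(\mu|_{A^{rc}})(y)=(\mu|_{A^{rc}})^A(V_1)<\infty$ since the Radon measure $(\mu|_{A^{rc}})^A$ is locally finite. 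The $\mu$-measurability of $y \mapsto \varepsilon_y^A(V_n)$, and hence of $y \mapsto \varepsilon_y^A(K)$, follows from the Borel measurability of $y \mapsto \varepsilon_y^A(f)$ for $f \in C_0^\infty$ noted in the proof of Theorem~\ref{th-int-rep}, combined with outer regularity of $\varepsilon_y^A$.

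For the second assertion, I would simply invoke Corollary~\ref{b-sum}: $\mu^A = (\mu|_{A^{rc}})^A + \mu|_{A^r}$. Since the first summand is $c_\alpha$-ab\-sol\-ut\-ely continuous by the preceding paragraph, the sum $\mu^A$ inherits this property if and only if $\mu|_{A^r}$ does. The only real technical obstacle is the measure-theoretic extension of the integral representation from continuous test functions to the indicator of a compact capacity-zero set; this is a routine consequence of outer regularity of Radon measures plus monotone/dominated convergence, and introduces no new ideas beyond those already present in the proof of Theorem~\ref{th-int-rep}.
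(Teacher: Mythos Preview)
Your proposal is correct and follows essentially the same route as the paper: both arguments combine the integral representation (Theorem~\ref{th-int-rep}), the pointwise $c_\alpha$-absolute continuity of $\varepsilon_y^A$ for $y\in A^{rc}$ (Corollary~\ref{conc2}), and the decomposition (Corollary~\ref{b-sum}). The only difference is in the justification for extending the integral identity from $C_0^+$ to the indicator $1_K$: the paper simply invokes \cite[Chapter~V, Section~3, Theorem~1]{B2} directly, whereas you supply an explicit approximation by $1_{V_n}$ for open $V_n\downarrow K$ and use dominated convergence---a correct but slightly more laborious substitute for the same Bourbaki reference.
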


\begin{proof} As seen from (\ref{bal-sum}), it is enough to establish the $c_\alpha$-ab\-sol\-ute continuity of $\mu_0^A$, where $\mu_0:=\mu|_{A^{rc}}$.
Fix a compact set $K\subset\overline{A}$ with $c_\alpha(K)=0$. For every $y\in A^{rc}$, $\varepsilon_y^A$ is $c_\alpha$-ab\-sol\-ut\-ely continuous by Corollary~\ref{conc2}, hence $\varepsilon_y^A(K)=0$. We therefore conclude from (\ref{L-repr}) with $\mu:=\mu_0$ by applying \cite[Chapter~V, Section~3, Theorem~1]{B2} to the indicator function $1_K$ that
\[\int 1_K\,d\mu_0^A=\int\,d\mu_0(y)\int 1_K(x)\,d\varepsilon_y^A(x)=0,\]
which yields the claim.
\end{proof}

\begin{corollary}\label{tm} $A\subset\mathbb R^n$ is not\/ $\alpha$-thin at infinity if and only if
\[\mu^A(\mathbb R^n)=\mu(\mathbb R^n)\text{ \ for all\ }\mu\in\mathfrak M^+.\]
\end{corollary}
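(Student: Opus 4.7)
\begin{pf} The plan is to combine the integral representation $\mu^A=\int\varepsilon_y^A\,d\mu(y)$ from Theorem~\ref{th-int-rep} with the evaluation formula for $\varepsilon_y^A(\mathbb R^n)$ in Theorem~\ref{har-tot}, so that total mass of the balayage reduces to an integral of $\varepsilon_y^A(\mathbb R^n)$ against $\mu$. For the forward direction (assuming $A$ is not $\alpha$-thin at infinity), Theorem~\ref{har-tot} gives $\varepsilon_y^A(\mathbb R^n)=1$ for \emph{every} $y\in\mathbb R^n$, and integrating this identity against $\mu$ should yield $\mu^A(\mathbb R^n)=\mu(\mathbb R^n)$. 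For the converse, one exploits Corollary~\ref{seven} to produce a test measure on which total mass is not preserved.

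More precisely, I would first justify the identity
\[\mu^A(\mathbb R^n)=\int\varepsilon_y^A(\mathbb R^n)\,d\mu(y).\]
The constant function $1$ is positive and lower semicontinuous on $\mathbb R^n$, so by the Bourbaki integration result \cite[Chapter~V, Section~3, Proposition~2(c)]{B2} already invoked in the proof of Theorem~\ref{th-int-rep}, the Radon measure $\nu=\int\varepsilon_y^A\,d\mu(y)$ satisfies $\nu(1)=\int\varepsilon_y^A(1)\,d\mu(y)$; combined with $\nu=\mu^A$, this gives the displayed formula.

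Granted this, the forward implication is immediate: if $A$ is not $\alpha$-thin at infinity, the second line of \eqref{e-har-tot} in Theorem~\ref{har-tot} gives $\varepsilon_y^A(\mathbb R^n)=1$ identically in $y$, hence
\[\mu^A(\mathbb R^n)=\int 1\,d\mu(y)=\mu(\mathbb R^n)\quad\text{for every }\mu\in\mathfrak M^+.\]
For the converse, suppose $A$ \emph{is} $\alpha$-thin at infinity. Then Corollary~\ref{seven} (more precisely, its sharper version (vi$'$) in footnote~\ref{vi}) provides a point $y_0\in\mathbb R^n$ with $\varepsilon_{y_0}^A(\mathbb R^n)<1$; taking $\mu:=\varepsilon_{y_0}\in\mathfrak M^+$ we obtain
\[\mu^A(\mathbb R^n)=\varepsilon_{y_0}^A(\mathbb R^n)<1=\mu(\mathbb R^n),\]
so mass preservation fails for at least one $\mu$.

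The only delicate point is the passage from Theorem~\ref{th-int-rep}, which is stated for test functions, to the evaluation on the constant $1\notin C_0$; this is handled by Bourbaki's extension of vague integrals to positive lower semicontinuous integrands, already used in the proof of Theorem~\ref{th-int-rep}. Everything else is bookkeeping.
\end{pf}
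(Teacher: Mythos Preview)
Your proof is correct and follows essentially the same approach as the paper: the forward direction combines the integral representation of Theorem~\ref{th-int-rep} with $\varepsilon_y^A(\mathbb R^n)=1$ from Theorem~\ref{har-tot}, extending to the integrand $1$ via Bourbaki (the paper cites \cite[Chapter~V, Section~3, Theorem~1]{B2} rather than Proposition~2(c), but the substance is the same), and the converse is handled via Corollary~\ref{seven} by taking $\mu=\varepsilon_{y_0}$. One small remark: your parenthetical appeal to (vi$'$) is unnecessary, since (vi) already furnishes the point $y_0$ you need.
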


\begin{proof}By Corollary~\ref{seven}, it is enough to establish the "only if" part of the corollary. If $A$ is not $\alpha$-thin at infinity, Theorem~\ref{har-tot} gives
\[\varepsilon_y^A(\mathbb R^n)=1\text{ \ for all\ }y\in\mathbb R^n.\]
Applying Theorem~\ref{th-int-rep} we therefore get by \cite[Chapter~V, Section~3, Theorem~1]{B2}
\[\mu^A(\mathbb R^n)=\int 1\,d\mu^A=\int d\mu(y)\int 1(x)\,d\varepsilon_y^A(x)=\int 1\,d\mu=\mu(\mathbb R^n)\text{ \ for all\ }\mu\in\mathfrak M^+,\]
as required.
\end{proof}

\begin{corollary}\label{c-desc}Let\/ $\alpha<2$. For any\/ $A$ and\/ $\mu\in\mathfrak M^+$ with\/ $\mu(A^{rc})>0$,
\[S(\mu^A)=\text{\rm Cl}_{\mathbb R^n}\breve{A}.\]
\end{corollary}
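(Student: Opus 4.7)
The plan is to derive the statement from the integral representation $\mu^A = \int \varepsilon_y^A\,d\mu(y)$ (Theorem~\ref{th-int-rep}) combined with the exact description of $S(\varepsilon_y^A)$ supplied by Theorem~\ref{desc-sup}. For the inclusion $S(\mu^A) \subset \text{Cl}_{\mathbb R^n}\breve{A}$, I would substitute (\ref{3}) into the integral representation to get $\mu^A = \mu^{\breve{A}}$; since every $\mu^K$ with $K \subset \breve{A}$ compact is concentrated on $K$, the vague convergence (\ref{v-conv}) forces $\mu^{\breve{A}}$ to be concentrated on the closed set $\text{Cl}_{\mathbb R^n}\breve{A}$.

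For the reverse inclusion, I would first verify that Theorem~\ref{desc-sup} applies to $\breve{A}$: the set $\breve{A}$ is its own reduced kernel (because $c_\alpha(A \setminus \breve{A}) = 0$), and $A^r = (\breve{A})^r$ (since $\varepsilon_y^A = \varepsilon_y^{\breve{A}}$). For $\alpha<2$ no further hypotheses are required, so Theorem~\ref{desc-sup} applied to $\breve{A}$ gives
\[S(\varepsilon_y^A) = S(\varepsilon_y^{\breve{A}}) = \text{Cl}_{\mathbb R^n}\breve{A} \text{ \ for every\ } y \in A^{rc}.\]
Now fix $x \in \breve{A}$ and an open neighborhood $V$ of $x$, and pick $f \in C_0^+$ vanishing outside $V$ with $f(x)>0$. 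By continuity, $f \geq c > 0$ on some open $W \subset V$ containing $x$; hence for each $y \in A^{rc}$, since $x \in \breve{A} \cap W \subset S(\varepsilon_y^A) \cap W$, one has $\varepsilon_y^A(W) > 0$ and consequently $\varepsilon_y^A(f) \geq c\,\varepsilon_y^A(W) > 0$.

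Using the $\mu$-measurability of $y \mapsto \varepsilon_y^A(f)$ established in the proof of Theorem~\ref{th-int-rep}, Theorem~\ref{th-int-rep} itself gives
\[\mu^A(f) \geq \int_{A^{rc}} \varepsilon_y^A(f)\,d\mu(y) > 0,\]
the strict inequality coming from the hypothesis $\mu(A^{rc}) > 0$ together with the pointwise strict positivity of the integrand on $A^{rc}$. Thus $\mu^A(V) \geq \mu^A(f) > 0$, so $x \in S(\mu^A)$; taking closures yields $\text{Cl}_{\mathbb R^n}\breve{A} \subset S(\mu^A)$. The only delicate point is making sure that Theorem~\ref{desc-sup} legitimately applies to $\breve{A}$ (handled by $\breve{\breve A}=\breve A$ and $A^r=(\breve A)^r$); the restriction $\alpha<2$ is essential, as it is precisely what spares us from the extra hypotheses $c_\alpha(A^i)=0$ and connectedness of $\overline{A}^c$ that Theorem~\ref{desc-sup} imposes when $\alpha=2$.
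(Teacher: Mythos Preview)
Your proof is correct and follows essentially the same approach as the paper, which simply states that in view of (\ref{3}) the result follows by combining Theorems~\ref{desc-sup} and~\ref{th-int-rep}. You have carefully unpacked the details the paper leaves implicit (in particular the verification that $\breve{\breve A}=\breve A$ and $A^r=(\breve A)^r$ so that Theorem~\ref{desc-sup} applies, and the passage from pointwise positivity of $\varepsilon_y^A(f)$ on $A^{rc}$ to $\mu^A(f)>0$), but the route is the same.
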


\begin{proof}In view of (\ref{3}), this follows by combining Theorems~\ref{desc-sup} and \ref{th-int-rep}.\end{proof}

\section{For any $A$, there is a $K_\sigma$-set $A_0\subset A$ with $\mu^A=\mu^{A_0}$}\label{sec-last-1}

Inner balayage to $A$ arbitrary can always be reduced to the balayage to $A_0$ Borel. More precisely, the following assertion is true.

\begin{theorem}\label{th-id-bor}For\/ $A$ arbitrary, there exists a\/ $K_\sigma$-set\/ $A_0\subset A$ such that
\begin{equation}\label{eq-a'}\mu^A=\mu^{A_0}\text{ \ for all\ }\mu\in\mathfrak M^+,\end{equation}
hence\/\footnote{In general, $A^i\ne(A_0)^i$.}
\begin{equation}\label{cor-reg}A^r=(A_0)^r.\end{equation}
\end{theorem}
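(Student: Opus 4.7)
The strategy is to reduce $\mu^A=\mu^{A_0}$ to the equality $\sigma_j^A=\sigma_j^{A_0}$ for a countable family $\{\sigma_j\}\subset\mathcal E_\alpha^+$, and then to build $A_0$ so that each $\sigma_j^A$ already lies in $\mathcal E_{A_0}'$; the required equalities will then fall out of the uniqueness of the strong projection onto a nested convex subset. Concretely, by Proposition~\ref{cor-count} and Lemma~\ref{1.1} I first fix a countable family $\mathcal E_\alpha^\circ\subset\mathcal E_\alpha$ for which $\{\kappa_\alpha\psi:\psi\in\mathcal E_\alpha^\circ\}=S$ is vaguely determining on $\mathfrak M^+$, and enumerate $\{\sigma_j\}_{j\in\mathbb N}:=\{\psi^+,\psi^-:\psi\in\mathcal E_\alpha^\circ\}\subset\mathcal E_\alpha^+$. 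Splitting each $\psi=\psi^+-\psi^-$ and applying (\ref{alternative}) exactly as in the proof of Theorem~\ref{measur}, the identities $\sigma_j^A=\sigma_j^{A_0}$ force $\mu^A(f)=\mu^{A_0}(f)$ for every $f\in S$ and every $\mu\in\mathfrak M^+$; Proposition~\ref{cor-count} upgrades this to (\ref{eq-a'}).

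The construction of $A_0$ proceeds via strong approximation of each $\sigma_j^A$. Since $\sigma_j^A=P_{\mathcal E_A'}\sigma_j$ lies in $\mathcal E_A'$, i.e.\ the strong closure of $\mathcal E_\alpha^+(A)$, I pick $\nu_{j,m}\in\mathcal E_\alpha^+(A)$ with $\|\nu_{j,m}-\sigma_j^A\|_\alpha<1/m$; and by inner regularity of the Radon measure $\nu_{j,m}$ together with the $\nu_{j,m}\otimes\nu_{j,m}$-integrability of $\kappa_\alpha$ on $A\times A$ (which follows from $\|\nu_{j,m}\|_\alpha<\infty$), a sufficiently large compact $K_{j,m}\subset A$ ensures
\[
\|\nu_{j,m}-\nu_{j,m}|_{K_{j,m}}\|_\alpha^2=\iint_{(A\setminus K_{j,m})\times(A\setminus K_{j,m})}\kappa_\alpha\,d\nu_{j,m}\otimes d\nu_{j,m}<1/m^2
\]
by dominated convergence. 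Setting $\rho_{j,m}:=\nu_{j,m}|_{K_{j,m}}\in\mathcal E_\alpha^+(K_{j,m})$, the triangle inequality gives $\|\rho_{j,m}-\sigma_j^A\|_\alpha<2/m$, so $\rho_{j,m}\to\sigma_j^A$ strongly as $m\to\infty$. I then define
\[
A_0:=\bigcup_{j,m\in\mathbb N}K_{j,m}\subset A,
\]
which is manifestly a $K_\sigma$-set.

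The decisive step is now essentially formal: each $\rho_{j,m}$ belongs to $\mathcal E_\alpha^+(A_0)\subset\mathcal E_{A_0}'$, and since $\mathcal E_{A_0}'$ is strongly closed, the strong limit $\sigma_j^A$ sits inside $\mathcal E_{A_0}'$. But $\mathcal E_{A_0}'\subset\mathcal E_A'$, and $\sigma_j^A=P_{\mathcal E_A'}\sigma_j$ is characterized as the unique minimizer of $\|\sigma_j-\nu\|_\alpha$ over $\nu\in\mathcal E_A'$; since this minimizer already belongs to the smaller cone $\mathcal E_{A_0}'$, it is a fortiori the unique minimizer there, so $\sigma_j^A=P_{\mathcal E_{A_0}'}\sigma_j=\sigma_j^{A_0}$. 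Combined with the reduction in the first paragraph, this yields (\ref{eq-a'}); and (\ref{cor-reg}) follows at once by specializing (\ref{eq-a'}) to $\mu=\varepsilon_y$ and reading off the definition of $A^r$.

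The step I expect to demand the most care is the energy-norm approximation of $\sigma_j^A$ by measures supported on compact subsets of $A$---in particular the passage from the abstract strong closure $\mathcal E_A'$ (which only guarantees approximants supported somewhere on $A$) to approximants supported on compact $K\subset A$. This relies on the dominated-convergence estimate above together with the inner regularity of every finite-energy Radon measure, and it is ultimately what makes the $K_\sigma$-construction of $A_0$ possible.
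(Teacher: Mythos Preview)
Your proof is correct and shares with the paper's argument both the reduction to the countable family $\{\psi^\pm:\psi\in\mathcal E_\alpha^\circ\}$ via (\ref{alternative}) and Proposition~\ref{cor-count}, and the deduction of (\ref{cor-reg}) from (\ref{eq-a'}). The difference lies in how each argument manufactures the compact sets $K_{j,m}\subset A$ and then verifies $\sigma_j^{A_0}=\sigma_j^A$. The paper works potential-theoretically: it invokes (\ref{pot-conv}), so that $\kappa_\alpha\sigma_j^K\uparrow\kappa_\alpha\sigma_j^A$ along $K\in\mathfrak C_A$, extracts a countable sequence $(K_k)$ achieving this supremum by the second-countability lemma \cite[Appendix~VIII, Theorem~2]{Doob}, sets $A_{\sigma_j}':=\bigcup_k K_k$, and then uses the monotonicity (\ref{mon-pr}) to sandwich $\kappa_\alpha\sigma_j^{A_0}$ between $\kappa_\alpha\sigma_j^{A_{\sigma_j}'}$ and $\kappa_\alpha\sigma_j^A$. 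You work instead inside the pre-Hilbert space: you approximate $\sigma_j^A\in\mathcal E_A'$ strongly by restrictions $\nu_{j,m}|_{K_{j,m}}$, conclude $\sigma_j^A\in\mathcal E_{A_0}'$, and then invoke uniqueness of the projection onto nested convex cones. Your route avoids any appeal to (\ref{pot-conv}) or the Doob lemma, at the cost of the inner-regularity/dominated-convergence step you flag at the end (which is indeed routine once one notes that a Radon measure concentrated on $A$, restricted to the compact exhaustion of $\mathbb R^n$, is inner regular on the $\nu$-measurable set $A$); the paper's route hides that step inside the already-established (\ref{pot-conv}) and is slightly more streamlined for that reason.
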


\begin{proof} Fix $\mu\in\mathfrak M^+$. According to (\ref{pot-conv}), the net $(\kappa_\alpha\mu^K)_{K\in\mathfrak C_A}$ ($\mathfrak C_A$ being the upper directed family of all  $K\subset A$ compact) increases to $\kappa_\alpha\mu^A$. The potentials being l.s.c.\ while $\mathbb R^n$ sec\-ond-count\-able, there is an increasing sequence $(K^\mu_k)_{k\in\mathbb N}\subset\mathfrak C_A$ such that
\begin{equation*}\label{lim2}\kappa_\alpha\mu^{K^\mu_k}\uparrow\kappa_\alpha\mu^A\text{ \ pointwise on $\mathbb R^n$ as $k\to\infty$},\end{equation*}
see \cite[Appendix~VIII, Theorem~2]{Doob}. Setting
\begin{equation}\label{prime}A_\mu':=\bigcup_{k\in\mathbb N}K^\mu_k,\end{equation}
we therefore get
\[\kappa_\alpha\mu^{A_\mu'}=\lim_{k\to\infty}\,\kappa_\alpha\mu^{K^\mu_k}=\kappa_\alpha\mu^A\text{ \ on\ }\mathbb R^n,\]
hence
\[\mu^{A_\mu'}=\mu^A.\]
Even more generally, for any $Q$ such that $A_\mu'\subset Q\subset A$, we have
\begin{equation}\label{prime2}\mu^{A_\mu'}=\mu^Q=\mu^A\end{equation}
because, by (\ref{mon-pr}),
\[\kappa_\alpha\mu^{A_\mu'}\leqslant\kappa_\alpha\mu^Q\leqslant\kappa_\alpha\mu^A=\kappa_\alpha\mu^{A_\mu'}\text{ \ on\ }\mathbb R^n.\]

Denoting now by $\mathcal E_\alpha^\circ$ the (countable) set of $\psi\in\mathcal E_\alpha$ with $f=\kappa_\alpha\psi\in S$ (Lemma~\ref{1.1}), where $S\subset C_0^\infty$ is the countable set introduced in Lemma~\ref{l-count}, write
\[A_0:=\bigcup_{\psi\in\mathcal E_\alpha^\circ}A_\psi',\]
$A_\psi'$ being defined by (\ref{prime}) with $\mu:=\psi$. Then $A_0$ is a $K_\sigma$-subset of $A$, and moreover
\[\psi^{A_0}=\psi^A\text{ \ for all\ }\psi\in\mathcal E_\alpha^\circ,\]
by (\ref{prime2}) with $\mu:=\psi$ and $Q:=A_0$. Hence, by (\ref{alternative}) with $\sigma:=\psi$, \[\kappa_\alpha(\mu^A,\psi)=\kappa_\alpha(\mu,\psi^A)=\kappa_\alpha(\mu,\psi^{A_0})=\kappa_\alpha(\mu^{A_0},\psi)\text{ \ for all\ }\psi\in\mathcal E_\alpha^\circ,\]
or equivalently
\[\mu^A(f)=\mu^{A_0}(f)\text{ \ for all\ }f\in S.\]
By Proposition~\ref{cor-count}, this implies (\ref{eq-a'}), the set $A_0$ being independent of $\mu\in\mathfrak M^+$.

Finally, applying (\ref{eq-a'}) to $\mu:=\varepsilon_y$ gives
\[\varepsilon_y=\varepsilon_y^A=\varepsilon_y^{A_0}\text{ \ for all\ }y\in A^r,\]
hence $A^r\subset(A_0)^r$. The opposite being obvious, the proof is complete.\end{proof}

\begin{corollary}\label{cor-outer}For the\/ $K_\sigma$-set\/ $A_0\subset A$ introduced in Theorem\/~{\rm\ref{th-id-bor}},\footnote{See also Remark~\ref{rem-ou} above.}
\[\mu^A=\mu^{A_0}=\bar\mu^{A_0}\text{ \ for all\ }\mu\in\mathfrak M^+,\]
where\/ $\bar\mu^{A_0}$ denotes the outer Riesz balayage of\/ $\mu$ to\/ $A_0$.
\end{corollary}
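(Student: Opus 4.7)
The plan is to observe that the first equality $\mu^A = \mu^{A_0}$ is precisely (\ref{eq-a'}) in Theorem~\ref{th-id-bor}, so only the second equality $\mu^{A_0} = \bar\mu^{A_0}$ requires a new argument. Since $A_0$ is Borel measurable (in fact a $K_\sigma$-set), the mechanism I would invoke is the classical principle that inner and outer Riesz balayage coincide on Borel sets — a capacitability phenomenon ultimately rooted in Choquet's theorem for analytic sets.

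To make this concrete, I would write $A_0 = \bigcup_k L_k$ with $(L_k)$ an increasing sequence of compact subsets (available by construction from the proof of Theorem~\ref{th-id-bor}). For each compact $L_k$ the inner and outer balayages coincide, both reducing to the classical construction, so $\mu^{L_k} = \bar\mu^{L_k}$. On the inner side, (\ref{v-conv}) and (\ref{pot-conv}) give $\mu^{L_k} \to \mu^{A_0}$ vaguely and $\kappa_\alpha \mu^{L_k} \uparrow \kappa_\alpha \mu^{A_0}$ pointwise. The corresponding monotone-approximation property for outer balayage under an increasing compact exhaustion of a $K_\sigma$-set is available from the balayage-space theory in \cite{BH}; this yields $\bar\mu^{L_k} \to \bar\mu^{A_0}$ in the same sense. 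Since the two sequences coincide termwise, their common limit forces $\mu^{A_0} = \bar\mu^{A_0}$.

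The hard part will be quoting the outer-balayage continuity result along a compact exhaustion of a $K_\sigma$-set from \cite{BH} in a form applicable to arbitrary $\mu \in \mathfrak M^+$. If a direct reference is inconvenient, a fallback plan is to first reduce to the Dirac case — verifying $\varepsilon_y^{A_0} = \bar\varepsilon_y^{A_0}$ pointwise in $y$ by the argument above applied to $\mu := \varepsilon_y$ — and then lift to general $\mu$ via the integral representation of Theorem~\ref{th-int-rep}, combined with the outer analogue of that formula (also from \cite{BH}). Either route is essentially two lines once the appropriate Borel-set agreement of inner and outer balayage is in hand; no genuinely new potential-theoretic input beyond Theorem~\ref{th-id-bor} and known facts about outer balayage should be required.
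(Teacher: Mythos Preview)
Your proposal is correct and follows essentially the same route as the paper: write $A_0$ as an increasing union of compacts, use $\mu^{K_k}=\bar\mu^{K_k}$ on compacts, pass to the limit on the inner side via (\ref{pot-conv}) and on the outer side via the monotone-approximation results from \cite{BH} (the paper cites \cite[Proposition~VI.1.9, Lemma~I.1.7]{BH}), and conclude by equality of potentials. The paper carries this out entirely at the level of potentials in a single displayed chain; your fallback via Dirac measures is unnecessary.
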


\begin{proof}In fact, since $A_0$ is the union of an increasing sequence $(K_k)$ of compact sets, from (\ref{eq-a'}) and (\ref{pot-conv}) (applied to $A_0$) we get
\[\kappa_\alpha\mu^A=\kappa_\alpha\mu^{A_0}=\lim_{k\to\infty}\,\kappa_\alpha\mu^{K_k}=\lim_{k\to\infty}\,\kappa_\alpha\bar\mu^{K_k}=\kappa_\alpha\bar\mu^{A_0},\]
the last equality being valid by \cite[Proposition~VI.1.9, Lemma~I.1.7]{BH}.\end{proof}

\begin{theorem}\label{cor-eq-reg}If\/ $A$ is inner\/ $\alpha$-thin at infinity, there is a\/ $K_\sigma$-set\/ $A'\subset A$ with
\begin{align}\label{cor1-1}\gamma_A&=\gamma_{A'},\\
\label{cor1-2}A^r&=(A')^r,\end{align}
where\/ $\gamma_A$, resp.\ $\gamma_{A'}$, is the inner equilibrium measure of\/ $A$, resp.\ $A'$.
\end{theorem}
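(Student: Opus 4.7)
The plan is to apply Theorem~\ref{th-id-bor} twice---once to $A$ itself, and once to the Kelvin-inverted set $A_y^*$---and to glue the two resulting $K_\sigma$-sets together. Fix any $y\in\mathbb R^n$. The first application yields a $K_\sigma$-set $A_0\subset A$ with $\mu^A=\mu^{A_0}$ for all $\mu\in\mathfrak M^+$ (and hence, by specialization to $\mu=\varepsilon_z$, $A^r=(A_0)^r$); the second yields a $K_\sigma$-set $B_0\subset A_y^*$ with $\mu^{A_y^*}=\mu^{B_0}$ for all $\mu$. Since $B_0\subset\mathbb R^n\setminus\{y\}$, the restriction $J_y|_{B_0}$ is a homeomorphism that sends compact subsets to compact subsets of $A\setminus\{y\}$, so $J_y(B_0)$ is a $K_\sigma$-subset of $A$. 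I then set
\[A':=A_0\cup J_y(B_0),\]
a $K_\sigma$-subset of $A$, and verify both (\ref{cor1-1}) and (\ref{cor1-2}).

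For (\ref{cor1-2}) I would first record the monotonicity property $B_1\subset B_2\Rightarrow B_1^r\subset B_2^r$: if $z\in B_1^r$, i.e.\ $\varepsilon_z^{B_1}=\varepsilon_z$, then combining (\ref{mon-pr}) and (\ref{ineq2}) gives $\kappa_\alpha\varepsilon_z=\kappa_\alpha\varepsilon_z^{B_1}\leqslant\kappa_\alpha\varepsilon_z^{B_2}\leqslant\kappa_\alpha\varepsilon_z$, forcing $\varepsilon_z^{B_2}=\varepsilon_z$. The inclusions $A_0\subset A'\subset A$ then sandwich $(A')^r$ between $(A_0)^r=A^r$ and $A^r$, which establishes (\ref{cor1-2}).

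For (\ref{cor1-1}) I first observe that, since $A'\subset A$, monotonicity of inner capacity in the Wiener-type criterion of Theorem~\ref{th-th}(iii) guarantees that $A'$ is $\alpha$-thin at infinity, so $\gamma_{A'}$ exists. A direct computation using $y\notin J_y(B_0)$ and the involutive nature of $J_y$ on $\mathbb R^n\setminus\{y\}$ gives $(A')_y^*=J_y(A'\setminus\{y\})=(A_0)_y^*\cup B_0$, which satisfies $B_0\subset(A')_y^*\subset A_y^*$. The second application of Theorem~\ref{th-id-bor} yields $\varepsilon_y^{B_0}=\varepsilon_y^{A_y^*}$, and monotonicity of balayage potentials (\ref{mon-pr}) then collapses the sandwich to $\varepsilon_y^{(A')_y^*}=\varepsilon_y^{A_y^*}$. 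Applying the Kelvin identity (\ref{har-eq}) to both $A$ and $A'$ rewrites the equality as $\mathcal K_y\gamma_{A'}=\mathcal K_y\gamma_A$, and the involution property of $\mathcal K_y$ yields $\gamma_A=\gamma_{A'}$.

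The main obstacle is conceptual rather than computational: Theorem~\ref{th-id-bor} is engineered to preserve the balayages $\mu^B$, but the equilibrium measure $\gamma_A$ is not itself produced as a balayage of a fixed measure, so the identity $\gamma_A=\gamma_{A_0}$ cannot be read off directly from $\mu^A=\mu^{A_0}$. The device of applying Theorem~\ref{th-id-bor} also to $A_y^*$ and then pulling back by $J_y$ bridges this gap through the Kelvin identity (\ref{har-eq}), which exhibits $\gamma_A$ as the Kelvin transform of a balayage; the first application (to $A$) is retained only to cover the potentially troublesome case $y\in A^r$, which the set $J_y(B_0)$ alone need not reach.
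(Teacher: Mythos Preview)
Your proof is correct and shares the paper's core idea: apply Theorem~\ref{th-id-bor} to the inverted set $A_y^*$, then pull back via $J_y$ and use the Kelvin identity (\ref{har-eq}) to obtain (\ref{cor1-1}). The paper's proof is slightly leaner: it sets $A':=J_y(A_0^*)$ directly (a single application of Theorem~\ref{th-id-bor}, to $A_y^*$) and then derives (\ref{cor1-2}) from the equality $(A_y^*)^r=(A_0^*)^r$ by invoking the fact (footnote~\ref{F}) that $J_y$ carries inner $\alpha$-regular points to inner $\alpha$-regular points. Your additional application of Theorem~\ref{th-id-bor} to $A$ itself, and the resulting sandwich $A_0\subset A'\subset A$, buys you a clean monotonicity argument for (\ref{cor1-2}) that avoids any appeal to the behavior of regular points under inversion---and, as you correctly point out, it disposes of the edge case $y\in A^r$ without further comment.
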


\begin{proof} For the $J_y$-image $A^*$ of $A\setminus\{y\}$, $y\in\mathbb R^n$ being arbitrarily given, choose a $K_\sigma$-set $A^*_0\subset A^*$ introduced in Theorem~\ref{th-id-bor}, and
write $A':=J_y(A^*_0)$. Combining $\varepsilon_y^{A^*}=\varepsilon_y^{A^*_0}$ with (\ref{har-eq}) yields (\ref{cor1-1}). Since $J_y$ maps $(A^*)^r$ onto $A^r$, and $(A_0^*)^r$ onto $(A')^r$ (see footnote~\ref{F}), we get (\ref{cor1-2}) from (\ref{cor-reg}).\end{proof}

\section{Sequences of inner swept (equilibrium) measures}\label{sec-last}

As further applications of Theorem~\ref{th-id-bor}, we study the vague and strong convergence of sequences of inner swept (resp.\ equilibrium) measures.

Throughout this section, consider the exhaustion of $A$ arbitrary by 
\begin{equation}\label{exh}A_k:=A\cap U_k,\quad k\in\mathbb N,\end{equation}
$(U_k)$ being an increasing sequence of universally measurable sets with the union $\mathbb R^n$.

\begin{theorem}\label{pr-cont} For any\/ $\mu\in\mathfrak M^+$, then\/\footnote{In fact, (\ref{cont2}) can be derived from (\ref{cont1}) by utilizing the monotonicity property (\ref{mon-pr}) and the vague lower semicontinuity of the map $\nu\mapsto\kappa_\alpha\nu$ on $\mathfrak M^+$.}
\begin{align}\label{cont1}&\mu^{A_k}\to\mu^A\text{ \ vaguely},\\
\label{cont2}&\kappa_\alpha\mu^{A_k}\uparrow\kappa_\alpha\mu^A\text{ \ pointwise on\ }\mathbb R^n.
\end{align}
If moreover\/ $\mu\in\mathcal E_\alpha^+$, then also\/ $\mu^{A_k}\to\mu^A$ strongly, i.e.
\begin{equation}\label{cont222}\lim_{k\to\infty}\,\|\mu^{A_k}-\mu^A\|_\alpha=0.\end{equation}
\end{theorem}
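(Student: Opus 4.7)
The plan is to prove (\ref{cont2}) first by reducing $A$ to a $K_\sigma$-subset via Theorem~\ref{th-id-bor}, derive (\ref{cont1}) from the integral representation of Theorem~\ref{th-int-rep}, and upgrade to the strong convergence (\ref{cont222}) by a Hilbert-space projection argument.

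For (\ref{cont2}), apply Theorem~\ref{th-id-bor} to pick a $K_\sigma$-subset $A_0=\bigcup_jK_j\subset A$ (with $K_j$ compact and increasing) such that $\mu^{A_0}=\mu^A$; by (\ref{pot-conv}), $\kappa_\alpha\mu^{K_j}\uparrow\kappa_\alpha\mu^{A_0}=\kappa_\alpha\mu^A$. Set $B_k:=A_0\cap U_k$; each $B_k$ is universally measurable, $B_k\uparrow A_0$, and $B_k\subset A_k$, so by (\ref{mon-pr})
\[\kappa_\alpha\mu^{B_k}\leqslant\kappa_\alpha\mu^{A_k}\leqslant\kappa_\alpha\mu^A.\]
The key intermediate claim is $\kappa_\alpha\mu^{B_k}\uparrow\kappa_\alpha\mu^{A_0}$: this is the theorem for a $K_\sigma$-ambient set, to be obtained by transporting Fuglede's universally measurable convergence result \cite[Theorem~4.2]{F1} from equilibria to balayage via the integral representation in Theorem~\ref{th-int-rep} and the Kelvin identity (\ref{har-eq}). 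Letting $k\to\infty$ in the sandwich then yields (\ref{cont2}). For (\ref{cont1}), apply (\ref{cont2}) to $\mu:=\varepsilon_y$ and combine with uniqueness of a measure from its potential and vague compactness (the masses $\varepsilon_y^{A_k}(\mathbb R^n)$ are uniformly $\leqslant1$ by Theorem~\ref{har-tot}) to get $\varepsilon_y^{A_k}\to\varepsilon_y^A$ vaguely for every $y$; then the integral representation $\mu^{A_k}=\int\varepsilon_y^{A_k}\,d\mu(y)$ together with dominated convergence (with a $\mu$-integrable majorant constructed as in the proof of Theorem~\ref{th-int-rep}) gives $\mu^{A_k}(f)\to\mu^A(f)$ for every $f\in C_0$.

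For the strong convergence under $\mu\in\mathcal E_\alpha^+$, use (\ref{pr11}) to write $\mu^{A_k}=P_{\mathcal E_{A_k}'}\mu$. The strongly closed convex cones $\mathcal E_{A_k}'$ form an increasing chain inside $\mathcal E_A'$, so by the Hilbert-space fact on projections onto an increasing family, the $\mu^{A_k}$ converge strongly to $P_{\mathcal C}\mu$, where $\mathcal C$ is the strong closure of $\bigcup_k\mathcal E_{A_k}'$. To identify $\mathcal C=\mathcal E_A'$, take any $\tau\in\mathcal E_\alpha^+(A)$: the set $A_k=A\cap U_k$ is $\tau$-measurable (as $U_k$ is universally measurable and $\tau$ is concentrated on $A$), so $\tau_k:=\tau|_{A_k}\in\mathcal E_\alpha^+(A_k)\subset\mathcal E_{A_k}'$, and
\[\|\tau-\tau_k\|_\alpha^2=\iint_{(A\setminus A_k)\times(A\setminus A_k)}\kappa_\alpha(x,y)\,d\tau(x)\,d\tau(y)\to 0\]
by dominated convergence, since the integrand is dominated by an integrable function ($\kappa_\alpha(\tau,\tau)<\infty$) and the product of $\tau$-measurable complements decreases to $\emptyset$ as $\bigcup_kU_k=\mathbb R^n$.

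The main obstacle is the intermediate claim that $\kappa_\alpha\mu^{B_k}\uparrow\kappa_\alpha\mu^{A_0}$ when $A_0$ is $K_\sigma$ and $(B_k)=(A_0\cap U_k)$ is a universally measurable exhaustion: the compact-exhaustion statement (\ref{pot-conv}) does not apply directly because an arbitrary universally measurable increasing union need not absorb a given compact set at a finite stage. Resolving this requires a Choquet-capacitability argument for universally measurable sets, which in the equilibrium setting is Fuglede's Theorem~4.2 in \cite{F1}; transferring it to general inner balayage is exactly what the integral representation of Theorem~\ref{th-int-rep} and the Kelvin bridge (\ref{har-eq}) enable.
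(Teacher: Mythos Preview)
Your strong-convergence argument (the projection step) is correct and in fact slightly cleaner than the paper's: you work directly with arbitrary $A$ by observing that for $\tau\in\mathcal E_\alpha^+(A)$ the restrictions $\tau|_{A\cap U_k}$ are well defined (since $U_k$ is universally measurable while $A$ is $\tau$-meas\-ur\-able) and converge strongly to $\tau$, whereas the paper first treats the universally measurable case and only then reduces arbitrary $A$ to it via Theorem~\ref{th-id-bor}.

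The gap is in your treatment of (\ref{cont2}) for general $\mu$. You reduce via a sandwich to the ``intermediate claim'' $\kappa_\alpha\mu^{B_k}\uparrow\kappa_\alpha\mu^{A_0}$ with $A_0$ a $K_\sigma$-set and $B_k=A_0\cap U_k$, but your proposed resolution---transport Fuglede's equilibrium result through the Kelvin bridge (\ref{har-eq}) and then integrate via Theorem~\ref{th-int-rep}---does not close. Fuglede's Theorem~4.2 yields strong convergence of equilibria only under $c_\alpha(\cdot)<\infty$; after Kelvin this becomes $c_\alpha\bigl((A_0)_y^*\bigr)<\infty$, i.e.\ $\varepsilon_y^{A_0}\in\mathcal E_\alpha^+$ (Corollary~\ref{cor-en-f}), which need not hold. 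And invoking instead the equilibrium convergence of Theorem~\ref{th-cont-eq} would be circular, since in the paper that theorem is \emph{derived from} Theorem~\ref{pr-cont}.

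The fix is to reorder your own argument. Your projection step already proves $\sigma^{A_k}\to\sigma^A$ strongly for every $\sigma\in\mathcal E_\alpha^+$ and arbitrary $A$; combined with the monotonicity (\ref{mon-pr}) this gives $\kappa_\alpha\sigma^{A_k}\uparrow\kappa_\alpha\sigma^A$. For general $\mu\in\mathfrak M^+$, the increasing sequence $\kappa_\alpha\mu^{A_k}$ has a pointwise limit which, by \cite[Theorem~3.9]{L}, equals $\kappa_\alpha\mu_0$ for some $\mu_0$ with $\mu^{A_k}\to\mu_0$ vaguely. Pairing against an arbitrary $\sigma\in\mathcal E_\alpha^+$ via (\ref{alternative}) and passing to the limit (using the convergence just established for $\sigma$) yields $\kappa_\alpha(\mu_0,\sigma)=\kappa_\alpha(\mu,\sigma^A)$ for all such $\sigma$, hence $\mu_0=\mu^A$. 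This is precisely the paper's route, and it makes both your sandwich reduction for (\ref{cont2}) and the separate integral-representation derivation of (\ref{cont1}) unnecessary.
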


\begin{proof} Fix $\mu\in\mathfrak M^+$. By the monotonicity property (\ref{mon-pr}),
\[\kappa_\alpha\mu^{A_k}\leqslant\kappa_\alpha\mu^{A_j}\leqslant\kappa_\alpha\mu^A\text{ \ on $\mathbb R^n$ for $k\leqslant j$},\]
hence there is $\mu_0\in\mathfrak M^+$
such that (\ref{cont1}) and (\ref{cont2}) both hold with $\mu_0$ in place of $\mu^A$ \cite[Theorem~3.9]{L}.
We shall show that $\mu_0=\mu^A$, or equivalently (cf.\ (\ref{alternative}))
\begin{equation}\label{sigma}\kappa_\alpha(\mu_0,\sigma)=\kappa_\alpha(\mu,\sigma^A)\text{ \ for any given\ }\sigma\in\mathcal E^+_\alpha.\end{equation}

Assume first that $A$ is universally measurable; then so are the sets $A_k$. For the given $\sigma\in\mathcal E_\alpha^+$, the balayage $\sigma^{A_k}$ is, in fact, the orthogonal projection of $\sigma$ onto $\mathcal E_{A_k}'$, the strong closure of $\mathcal E^+_\alpha(A_k)$, cf.\ (\ref{pr11}). Thus $\sigma^{A_k}\in\mathcal E_{A_k}'$ and
\[\|\sigma-\sigma^{A_k}\|_\alpha=\min_{\nu\in\mathcal E_{A_k}'}\,\|\sigma-\nu\|_\alpha=\varrho(\sigma,\mathcal E_{A_k}'),\]
where
\[\varrho(\sigma,\mathcal B):=\inf_{\nu\in\mathcal B}\,\|\sigma-\nu\|_\alpha\text{ \ for\ }\mathcal B\subset\mathcal E^+_\alpha.\]
Since $\mathcal E_{A_k}'\subset\mathcal E_{A_j}'\subset\mathcal E_A'$ for $k\leqslant j$, applying \cite[Lemma~4.1.1]{F1} with $\mathcal H:=\mathcal E_\alpha$, $\Gamma:=\{\sigma-\nu:\ \nu\in\mathcal E_{A_j}'\}$, and $\lambda:=\sigma-\sigma^{A_j}$ yields
\[\|\sigma^{A_k}-\sigma^{A_j}\|^2_\alpha=\|(\sigma-\sigma^{A_k})-(\sigma-\sigma^{A_j})\|^2_\alpha\leqslant\|\sigma-\sigma^{A_k}\|_\alpha^2-\|\sigma-\sigma^{A_j}\|_\alpha^2.\]
Being decreasing and lower bounded, the sequence $(\|\sigma-\sigma^{A_k}\|_\alpha)$ is Cauchy in $\mathbb R$, which together with the last display shows that the sequence $(\sigma^{A_k})$ is strong Cauchy in $\mathcal E^+_\alpha$, and hence it converges strongly and vaguely to the unique $\sigma_0\in\mathcal E_A'$, $\mathcal E_A'$ being strongly closed while $\mathcal E^+_\alpha$ strongly complete. This implies that
\begin{align}\label{stream}\varrho(\sigma,\mathcal E_\alpha^+(A))&=\varrho(\sigma,\mathcal E_A')\leqslant\|\sigma-\sigma_0\|_\alpha=\lim_{k\to\infty}\,\|\sigma-\sigma^{A_k}\|_\alpha\\{}&=\lim_{k\to\infty}\,\varrho(\sigma,\mathcal E_{A_k}')=
\lim_{k\to\infty}\,\varrho(\sigma,\mathcal E_\alpha^+(A_k)),\notag\end{align}
the first and last equalities being evident by definition.

The sets $A_k$ being universally measurable, for every $\nu\in\mathcal E_\alpha^+(A)$ we get
\[\lim_{k\to\infty}\,\nu|_{A_k}(f)=\lim_{k\to\infty}\,\int1_{A_k}f\,d\nu=\int1_Af\,d\nu=\nu(f)\text{ \ for all\ }f\in C_0^+,\]
where the second equality holds by \cite[Chapter~IV, Section~1, Theorem~3]{B2}.
Thus $\nu|_{A_k}\to\nu$ vaguely, which gives
\[\|\nu\|_\alpha\leqslant\lim_{k\to\infty}\,\|\nu|_{A_k}\|_\alpha,\quad\kappa_\alpha(\sigma,\nu)\leqslant\lim_{k\to\infty}\,\kappa_\alpha(\sigma,\nu|_{A_k}).\]
The opposite being obvious, equality in fact prevails in these inequalities; hence
\[\|\sigma-\nu\|_\alpha=\lim_{k\to\infty}\,\|\sigma-\nu|_{A_k}\|_\alpha\geqslant\lim_{k\to\infty}\,\varrho(\sigma,\mathcal E_\alpha^+(A_k))\text{ \ for every\ }\nu\in\mathcal E^+_A\]
and consequently
\[\varrho(\sigma,\mathcal E_\alpha^+(A))\geqslant\lim_{k\to\infty}\,\varrho(\sigma,\mathcal E_\alpha^+(A_k)).\]
Combining this with (\ref{stream}) proves that $\sigma_0$, the strong and vague limit of $(\sigma^{A_k})$, is in fact equal to $P_{\mathcal E_A'}\sigma$ $\bigl({}=\sigma^A\bigr)$. This establishes the claimed relations (\ref{cont1})--(\ref{cont222}) for $\mu=\sigma\in\mathcal E^+_\alpha$ and $A$ universally measurable.

Let $A$ now be arbitrary. For the given $\sigma\in\mathcal E^+_\alpha$, choose $K_\sigma$-sets $A'\subset A$ and $A_k'\subset A_k$ so that $A_k'\subset A_{k+1}'$ and
\[\sigma^{A'}=\sigma^A\text{ \ and \ }\sigma^{A_k'}=\sigma^{A_k};\]
that such $A'$ and $A_k'$ exist can be concluded from Theorem~\ref{th-id-bor} and the monotonicity property (\ref{mon-pr}). Writing $\check{A}_k:=A'\cap U_k$, we have $\check{A}_k\subset A_k$, hence
\begin{equation}\label{contt22}\sigma^{A_k'\cup\check{A}_k}=\sigma^{A_k'}=\sigma^{A_k}\text{ \ for all\ }k\in\mathbb N\end{equation}
because, by (\ref{mon-pr}),
\[\kappa_\alpha\sigma^{A_k'}\leqslant\kappa_\alpha\sigma^{A_k'\cup\check{A}_k}\leqslant\kappa_\alpha\sigma^{A_k}=\kappa_\alpha\sigma^{A_k'}.\]
Similarly,
\begin{equation}\label{contt11}\sigma^Q=\sigma^{A'}=\sigma^A,\end{equation}
where
\[Q:=\bigcup_{k\in\mathbb N}\,(A_k'\cup\check{A}_k).\]
The sets $A_k'\cup\check{A}_k$, $k\in\mathbb N$, being universally measurable and forming an increasing sequence with the union $Q$, we conclude from what has been proved above that
\[\sigma^{A_k'\cup\check{A}_k}\to\sigma^{Q}\text{ \ strongly and vaguely},\]
which in view of (\ref{contt22}) and (\ref{contt11}) establishes the theorem for $\mu=\sigma\in\mathcal E^+_\alpha$.

For $\mu\in\mathfrak M^+$ arbitrary, applying (\ref{alternative}) with $\sigma\in\mathcal E^+_\alpha$ gives
\begin{equation}\label{al}\int\kappa_\alpha\mu^{A_k}\,d\sigma=\int\kappa_\alpha\sigma^{A_k}\,d\mu\text{ \ for all\ }k\in\mathbb N.\end{equation}
But, as shown above,
\[\kappa_\alpha\sigma^{A_k}\uparrow\kappa_\alpha\sigma^A\text{ \ and \ }\kappa_\alpha\mu^{A_k}\uparrow\kappa_\alpha\mu_0\text{ \ pointwise on\ }\mathbb R^n.\]
Letting $k\to\infty$ in (\ref{al}) and applying again \cite[Chapter~IV, Section~1, Theorem~3]{B2} we therefore get
\[\int\kappa_\alpha\mu_0\,d\sigma=\int\kappa_\alpha\sigma^A\,d\mu.\]
This establishes (\ref{sigma}), thereby completing the proof of the theorem.\end{proof}

\begin{theorem}\label{th-cont-eq} If\/ $A$ is inner\/ $\alpha$-thin at infinity, then
\begin{align}\label{cont11}&\gamma_{A_k}\to\gamma_A\text{ \ vaguely},\\
\label{cont22}&\kappa_\alpha\gamma_{A_k}\uparrow\kappa_\alpha\gamma_A\text{ \ pointwise on\ }\mathbb R^n,
\end{align}
the sets\/ $A_k$ being given by\/ {\rm(\ref{exh})}. If moreover\/ $A$ is inner\/ $\alpha$-ultrathin at infinity, or equivalently\/ $c_\alpha(A)<\infty$, then\/ $\gamma_{A_k}\to\gamma_A$ also strongly, i.e.
\begin{equation}\label{bor}\lim_{k\to\infty}\,\|\gamma_{A_k}-\gamma_A\|_\alpha=0,\end{equation}
hence
\[\lim_{k\to\infty}\,c_\alpha(A_k)=c_\alpha(A).\]
\end{theorem}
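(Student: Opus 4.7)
The plan is to reduce everything to Theorem~\ref{pr-cont} via the Kelvin transformation. Fix $y\in\mathbb R^n$ and set $A^*:=J_y(A\setminus\{y\})$, $A_k^*:=J_y(A_k\setminus\{y\})$, and $V_k:=J_y(U_k\setminus\{y\})\cup\{y\}$. Since $J_y$ restricts to a self-homeomorphism of $\mathbb R^n\setminus\{y\}$, hence preserves universal measurability, $(V_k)$ is an increasing sequence of universally measurable sets with union $\mathbb R^n$, and one verifies directly that $A_k^*=A^*\cap V_k$. As $A$ is inner $\alpha$-thin at infinity, Theorem~\ref{th-th} supplies $y\in(A^*)^{rc}$ together with $\varepsilon_y^{A^*}=\mathcal K_y\gamma_A$; each $A_k\subset A$ inherits $\alpha$-thinness at infinity from the monotonicity in the set of the series in (\ref{iii}), yielding likewise $\varepsilon_y^{A_k^*}=\mathcal K_y\gamma_{A_k}$. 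Applying Theorem~\ref{pr-cont} with $A$ replaced by $A^*$, exhaustion $(A^*\cap V_k)$, and input $\mu:=\varepsilon_y\in\mathfrak M^+$ therefore produces
\[\varepsilon_y^{A_k^*}\to\varepsilon_y^{A^*}\text{ \ vaguely},\qquad\kappa_\alpha\varepsilon_y^{A_k^*}\uparrow\kappa_\alpha\varepsilon_y^{A^*}\text{ \ pointwise on }\mathbb R^n.\]

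To deduce (\ref{cont22}) I invoke (\ref{KP}): for every $x\ne y$, with $x^*=J_y(x)$,
\[\kappa_\alpha\gamma_{A_k}(x)=|x-y|^{\alpha-n}\kappa_\alpha\varepsilon_y^{A_k^*}(x^*)\uparrow|x-y|^{\alpha-n}\kappa_\alpha\varepsilon_y^{A^*}(x^*)=\kappa_\alpha\gamma_A(x),\]
and the excluded point is handled by repeating the argument with any other inversion centre $y'\ne y$, which gives the monotone pointwise convergence on all of $\mathbb R^n$. For (\ref{cont11}), given $f\in C_0$ I choose $y\notin\operatorname{supp}f$ and set $g(x^*):=f(J_y(x^*))|x^*-y|^{\alpha-n}$ for $x^*\ne y$, $g(y):=0$. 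Compactness of $\operatorname{supp}f$ bounded away from $y$ makes $J_y(\operatorname{supp}f)$ compact (the formulas $|x-y||x^*-y|=1$ pin $x^*$ in an annulus round $y$), and makes $g$ continuous at $y$ with value $0$; thus $g\in C_0$. A direct computation from (\ref{kelv-m}), using $|x-y|^{\alpha-n}|x^*-y|^{\alpha-n}=1$, gives $\int f\,d\gamma_A=\int g\,d\varepsilon_y^{A^*}$ and likewise with $A$ replaced by $A_k$; vague convergence of $(\varepsilon_y^{A_k^*})$ tested on $g$ therefore transfers to vague convergence of $(\gamma_{A_k})$ tested on $f$.

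For the strong part, assume $c_\alpha(A)<\infty$, so that $c_\alpha(A_k)\leqslant c_\alpha(A)<\infty$ and all relevant equilibrium measures lie in $\mathcal E_\alpha^+$ with $\|\gamma_B\|_\alpha^2=c_\alpha(B)=\gamma_B(\mathbb R^n)$ for $B\in\{A,A_k\}$. Applying (\ref{kelv-mmm}) to $\nu:=\gamma_B$ yields $c_\alpha(B)=\gamma_B(\mathbb R^n)=\kappa_\alpha\varepsilon_y^{B_y^*}(y)$, so the pointwise convergence $\kappa_\alpha\varepsilon_y^{A_k^*}(y)\uparrow\kappa_\alpha\varepsilon_y^{A^*}(y)$ furnished by Theorem~\ref{pr-cont} at the single point $y$ delivers
\[c_\alpha(A_k)\uparrow c_\alpha(A),\qquad\text{equivalently}\qquad\|\gamma_{A_k}\|_\alpha\to\|\gamma_A\|_\alpha.\]
Combining this norm convergence with the already established vague convergence $\gamma_{A_k}\to\gamma_A$ yields (\ref{bor}) by the standard pre-Hilbert argument: Cauchy--Schwarz bounds $\limsup_k\kappa_\alpha(\gamma_{A_k},\gamma_A)\leqslant\|\gamma_A\|_\alpha^2$, while vague lower semicontinuity of $\kappa_\alpha\gamma_A$ under vague convergence of $(\gamma_{A_k})$ gives the matching $\liminf_k\kappa_\alpha(\gamma_{A_k},\gamma_A)\geqslant\|\gamma_A\|_\alpha^2$, after which expanding $\|\gamma_{A_k}-\gamma_A\|_\alpha^2=\|\gamma_{A_k}\|_\alpha^2-2\kappa_\alpha(\gamma_{A_k},\gamma_A)+\|\gamma_A\|_\alpha^2$ forces it to $0$.

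The chief technical obstacle is the strong convergence: Theorem~\ref{pr-cont} guarantees strong convergence $\mu^{A_k}\to\mu^A$ only under $\mu\in\mathcal E_\alpha^+$, a hypothesis manifestly violated by the natural input $\mu=\varepsilon_y$ (whose self-energy is infinite). The workaround is to exploit the ultrathin hypothesis, which puts the \emph{output} measures $\varepsilon_y^{A^*},\varepsilon_y^{A_k^*}$, and hence $\gamma_A,\gamma_{A_k}$, in $\mathcal E_\alpha^+$, and to use (\ref{kelv-mmm}) to convert the pointwise convergence of Kelvin-transformed potentials at the single fixed point $y$ into convergence of $\mathcal E_\alpha$-norms, from which strong convergence follows by the classical vague+norm $\Rightarrow$ strong implication.
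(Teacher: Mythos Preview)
Your proof is correct. For the vague and pointwise parts (\ref{cont11})--(\ref{cont22}) you follow the same Kelvin-transform reduction to Theorem~\ref{pr-cont} as the paper, only more explicitly: you verify the universal measurability of the transformed exhaustion $(V_k)$ and handle the centre $y$ in (\ref{cont22}) by changing inversion centre, whereas the paper simply cites (\ref{KP}) and \cite[Lemma~4.3]{L}.

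The genuine divergence is in the strong-convergence argument. The paper abandons the Kelvin picture entirely and appeals to the variational characterization of $\gamma_{A_k}$ as the norm minimizer over $\Gamma_{A_k}$: since the convex sets $\Gamma_{A_k}$ are nested, Fuglede's projection lemma \cite[Lemma~4.1.1]{F1} yields $\|\gamma_{A_j}-\gamma_{A_k}\|_\alpha^2\leqslant\|\gamma_{A_j}\|_\alpha^2-\|\gamma_{A_k}\|_\alpha^2$, whence $(\gamma_{A_k})$ is strong Cauchy and its strong limit is identified via the already-established vague convergence. You instead stay inside the Kelvin framework: the identity (\ref{kelv-mmm}) converts the pointwise convergence $\kappa_\alpha\varepsilon_y^{A_k^*}(y)\uparrow\kappa_\alpha\varepsilon_y^{A^*}(y)$ (a single evaluation of (\ref{cont2})) directly into $\|\gamma_{A_k}\|_\alpha\to\|\gamma_A\|_\alpha$, and then a bare-hands ``vague\,+\,norm $\Rightarrow$ strong'' computation finishes. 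Your route is more self-contained (no external variational lemma) and thematically unified with the first half; the paper's route is independent of the Kelvin machinery and would transplant to other kernels where the variational description of the equilibrium measure is available but no inversion is.
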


\begin{proof} Fix $y\in\mathbb R^n$. For $E\subset\mathbb R^n$, denote by $E^*$ the $J_y$-image of $E\setminus\{y\}$. Then $(A_k^*)$ is an increasing sequence with the union $A^*$,   hence Theorem~\ref{pr-cont} with $\mu:=\varepsilon_y$ gives
\begin{align}\label{cont1'}&\varepsilon_y^{A_k^*}\to\varepsilon_y^{A^*}\text{ \ vaguely},\\
\label{cont2'}&\kappa_\alpha\varepsilon_y^{A_k^*}\uparrow\kappa_\alpha\varepsilon_y^{A^*}\text{ \ pointwise on\ }\mathbb R^n.
\end{align}
Since, by (\ref{har-eq}),
\[\gamma_{A_k}=\mathcal K_y\varepsilon_y^{A_k^*},\quad\gamma_A=\mathcal K_y\varepsilon_y^{A^*},\]
we derive (\ref{cont22}) from (\ref{cont2'}) by utilizing (\ref{KP}), and (\ref{cont11}) from (\ref{cont1'}) by use of \cite[Lemma~4.3]{L} with $\nu_k:=\varepsilon_y^{A_k^*}$ and $\nu:=\varepsilon_y^{A^*}$. (Observe that this lemma can be applied because $\varepsilon_y^{A_k^*}(\mathbb R^n)\leqslant1$ for all $k$.)

Assume now that $c_\alpha(A)<\infty$. Then all the $\gamma_{A_k}$ and $\gamma_A$ have finite energy, and $\gamma_{A_k}$ minimizes the norm $\|\nu\|_\alpha$ over the class $\Gamma_{A_k}$ of all $\nu\in\mathcal E^+_\alpha$ with $\kappa_\alpha\nu\geqslant1$ n.e.\ on $A_k$ (see Section~\ref{sec-pr1}). Since $\Gamma_A\subset\Gamma_{A_j}\subset\Gamma_{A_k}$ for all $j\geqslant k$, we conclude by applying \cite[Lemma~4.1.1]{F1} with $\mathcal H:=\mathcal E_\alpha$, $\Gamma:=\Gamma_{A_k}$, $\lambda:=\gamma_{A_k}$, and $\mu:=\gamma_{A_j}$ that
\[\|\gamma_{A_j}-\gamma_{A_k}\|^2_\alpha\leqslant\|\gamma_{A_j}\|^2_\alpha-\|\gamma_{A_k}\|^2_\alpha.\]
Being increasing and bounded from above by $c_\alpha(A)<\infty$, the sequence $(\|\gamma_{A_k}\|^2_\alpha)$ is Cauchy in $\mathbb R$, which combined with the preceding display implies that the sequence $(\gamma_{A_k})$ is strong Cauchy in $\mathcal E^+_\alpha$. Thus $(\gamma_{A_k})$ converges strongly (and vaguely) to the unique measure. In view of (\ref{cont11}),  this establishes (\ref{bor}).
\end{proof}

\begin{remark}\label{rem-ext}The latter part of Theorem~\ref{th-cont-eq} generalizes Fuglede's result \cite[Lemma~2.3.3, Theorem~4.2]{F1} established for $A$ universally measurable (cf.\ also \cite[Chapter~II, Section~2, n$^\circ$~9, Remark]{L}). Such generalization can actually be extended further to an arbitrary consistent kernel on a locally compact Hausdorff space that can be represented as a countable union of universally measurable sets. We intend to examine this more closely in forthcoming work.
\end{remark}

\section{Acknowledgements} The author thanks Prof.\ Dr.\ Krzysztof Bogdan and Prof.\ Dr.\ Wolfhard Hansen for many helpful discussions on the topic of the paper.


\begin{thebibliography}{99}

\bibitem{BH} Bliedtner, J., Hansen, W.: Potential Theory. An Analytic and Probabilistic Approach to Balayage. Springer, Berlin (1986)

\bibitem{KB0} Bogdan, K.: The boundary Harnack principle for the fractional Laplacian. Stud. Math. {\bf 123}, 43--80 (1997)

\bibitem{KB} Bogdan, K., Kulczycki T., Kwa\'{s}nicki M.: Estimates and structure of $\alpha$-har\-mon\-ic functions. Probab. Theory Relat. Fields {\bf 140}, 345–381 (2008)

\bibitem{B2} Bourbaki, N.: Elements of Mathematics. Integration. Chapters~1--6.
Springer, Berlin (2004)

\bibitem{Brelot} Brelot, M.: Sur le r\^{o}le du point \`{a} l'infini dans la th\'{e}orie des fonctions harmoniques. Ann. \'{E}c. Norm. Sup. {\bf 61}, 301--332 (1944)

\bibitem{Brelot2} Brelot, M.: Minorantes sousharmoniques, extr\'{e}males et capacit\'{e}s. J. Math. Pures Appl. {\bf 24}, 1--32 (1945)

\bibitem{Br} Brelot, M.: On Topologies and Boundaries in Potential Theory. Lecture Notes in Math. {\bf 175}, Springer,
Berlin (1971)

\bibitem{Camera} C\'{a}mera, G.A.: On a condition of thinness at infinity. Compos. Math. {\bf 70}, 1--11 (1989)

\bibitem{Ca} Cartan, H.: Th\'eorie du potentiel newtonien: \'energie, capacit\'e, suites de potentiels. Bull.\ Soc.\ Math.\ Fr.\ {\bf 73}, 74--106 (1945)

\bibitem{Ca2} Cartan, H.: Th\'eorie g\'en\'erale du balayage en potentiel newtonien. Ann. Univ. Fourier Grenoble {\bf 22}, 221--280 (1946)

\bibitem{Deny}Deny, J.: Un th\'{e}or\`{e}me sur les ensembles effil\'{e}s. Ann. Univ. Fourier Grenoble {\bf 23}, 139--142 (1947)

\bibitem{D1} Deny, J.: Les potentiels d'\'energie finie. Acta Math.\ {\bf 82}, 107--183 (1950)

\bibitem{Doob} Doob, J.L.: Classical Potential Theory and Its Probabilistic Counterpart. Springer, Berlin (1984)

\bibitem{DFHSZ2} Dragnev, P.D., Fuglede, B., Hardin, D.P., Saff, E.B., Zorii, N.: Condensers with touching
plates and constrained minimum Riesz and Green energy problems. Constr. Approx. {\bf 50}, 369--401 (2019)

\bibitem{DFHSZ1} Dragnev, P.D., Fuglede, B., Hardin, D.P., Saff, E.B., Zorii, N.: Constrained minimum Riesz
energy problems for a condenser with intersecting plates. Journal d'Analyse Math. (2020),
DOI:10.1007/s11854-020-0091-x

\bibitem{E2}
Edwards, R.E.: Functional Analysis. Theory and Applications. Holt,
Rinehart and Winston, New York (1965)

\bibitem{F1} Fuglede, B.: On the theory of potentials in locally compact spaces. Acta Math. {\bf 103}, 139--215  (1960)

\bibitem{FZ} Fuglede, B., Zorii, N.: Green kernels associated with Riesz kernels. Ann. Acad. Sci. Fenn. Math. {\bf 43}, 121--145 (2018)

\bibitem{FZ-Pot1} Fuglede, B., Zorii, N.: An alternative concept of Riesz energy of measures with application to generalized condensers. Potential Anal. {\bf 51}, 197--217 (2019)

\bibitem{FZ-Pot2} Fuglede, B., Zorii, N.: Various concepts of Riesz energy of measures and application to condensers with touching plates. Potential Anal. (2019), https://doi.org/10.1007/s11118-019-09803-w

\bibitem{L} Landkof, N.S.: Foundations of Modern Potential Theory. Springer, Berlin (1972)

\bibitem{S} Schwartz, L.: Th\'eorie des Distributions. Hermann, Paris (1997)

\bibitem{Z1} Zorii, N.: A problem of minimum energy for space
condensers and Riesz kernels.  Ukrainian Math. J. \textbf{41}, 29--36
(1989)

\bibitem{Z-arx} Zorii, N.: A concept of weak Riesz energy with application to condensers with touching plates. ArXiv:1911.12406

\bibitem{Z-bal} Zorii, N.: A theory of inner Riesz balayage and its applications. Bull. Pol. Acad. Sci. Math. (2020), DOI:10.4064/ba191104-31-1

\end{thebibliography}
\end{document}